\newtheorem{theorem}{Theorem}
\newtheorem{corollary}{Corollary}
\newtheorem{lem}[theorem]{Lemma}
\newtheorem{prop}[theorem]{Proposition}
\theoremstyle{remark}
\newtheorem{rem}{Remark}
\newcommand{\N}{\ensuremath{\mathbb{N}}}
\newcommand{\R}{\ensuremath{\mathbb{R}}}
\newcommand{\E}{\ensuremath{\mathbb{E}}}
\newcommand{\Pro}{\ensuremath{\mathbb{P}}}
\newcommand{\indi}{\ensuremath{\textbf{1}}}
\def\<{\langle}
\def\>{\rangle}
\begin{document}
\title{Poisson convergence for the largest eigenvalues of Heavy Tailed Random Matrices}

\author[A. Auffinger]{Antonio Auffinger}
\address{A. Auffinger\\
  Courant Institute of the Mathematical Sciences\\
  New York University\\
  251 Mercer Street\\ 
  New York, NY 10012, USA}
\email{auffing@cims.nyu.edu}

\author[G. Ben Arous]{G\'erard Ben Arous}
\address{G. Ben Arous\\
  Courant Institute of the Mathematical Sciences\\
  New York University\\
  251 Mercer Street\\ 
  New York, NY 10012, USA}
\email{benarous@cims.nyu.edu}

\author[S. P\'{e}ch\'{e}]{Sandrine P\'{e}ch\'{e}}
\address{S. P\'{e}ch\'{e}\\ 
  Institut Fourier\\
  Universit\'{e} Joseph Fourier - Grenoble \\
  100 rue des Maths, BP 74\\
  38402 St Martin d'Heres, France}
\email{sandrine.peche@ujf-grenoble.fr}

\subjclass[2000]{15A52; 62G32; 60G55}
\keywords{Largest eingenvalues statistics, extreme values, random matrices, heavy tails}

\date{\today}

\maketitle


\begin{abstract}
On \'{e}tudie la loi des plus grandes valeurs propres de matrices 
al\'{e}atoires sym\'{e}triques r\'{e}elles et de covariance empirique quand les coefficients des matrices sont \`{a} queue lourde. On \'{e}tend le r\'{e}sultat obtenu par A. Soshnikov dans \cite{Sos1} et on montre que le comportement asymptotique des plus grandes valeurs propres est d\'{e}termin\'{e} par les plus grandes entr\'{e}es de la matrice.

\end{abstract}

\begin{abstract}
We study the statistics of the largest eigenvalues of real symmetric and sample covariance matrices when the entries are heavy tailed. Extending the result obtained by A. Soshnikov in \cite{Sos1}, we prove that, in the absence of the fourth moment, the asymptotic behavior of the top eigenvalues is determined by the behavior of the largest entries of the matrix.
\end{abstract}

\section{Introduction and Notation}

We study the statistics of the largest eigenvalues of symmetric and sample covariance matrices when the entries are heavy tailed. Extending the result obtained by Soshnikov in \cite{Sos1}, we prove that in the absence of a finite fourth moment, the asymptotic behavior of the top eigenvalues is determined by the behavior of the largest entries of the matrix, i.e that the point process of the largest eigenvalues (properly normalized) converges to a Poisson Point Process, as in the usual extreme value theory for i.i.d. random variables. This result was predicted in the physics literature by Biroli, Bouchaud and Potters \cite{Bou}.   

We first consider the case of random real symmetric matrices with independent and heavy tailed entries. Let $(a_{ij})$, $1 \leq i \leq n, 1 \leq j \leq n$ be i.i.d random variables such that:

\begin{equation}
\label{def1}
1-F(x)=\bar{F}(x)=\Pro(|a_{ij}|>x)=L(x)x^{-\alpha},
\end{equation}  
where $\alpha > 0$ and $L$ is a slowly varying function, i.e., for all $t>0$ $$\lim_{x\rightarrow \infty} \frac{L(tx)}{L(x)}=1.$$


Consider the $n \times n$ real symmetric random matrix $A_n$ whose entries above the diagonal are the $(a_{ij}) , 1 \leq i \leq j \leq n$. Hypothesis (\ref{def1}) would be natural in the theory of extreme values for i.i.d random variables. It simply asserts that the distribution of the entries is in the max-domain of attraction of the Frechet distribution with exponent $\alpha$ (see \cite{Res}, page 54). Thus, for any $\alpha > 0$ the point process of extreme values of the entries of $A_n$ (properly normalized) is asymptotically Poissonian. More precisely, let 

\begin{equation}\label{bnw}
b_{n}=\inf\{x: 1 - F(x) \leq \frac{2}{n(n+1)}\},
\end{equation}
then the Point Process $$\hat{\mathcal{P}_n} = \sum_{1\leq i \leq j \leq n} \delta_{b_n^{-1}|a_{ij}|}$$ converges to a Poisson Point Process with intensity 

$$\rho(x)= \frac{\alpha}{ x^{1 + \alpha}}.$$ It is also classical that there exists another slowly varying function $L_o$ such that 
\begin{equation}
b_n \sim L_o(n)n^{\frac{2}{\alpha}}.
\end{equation} When $L \equiv 1$, then $b_n = (\frac{n(n+1)}{2})^{1/\alpha}$. 

We denote by $\lambda_1 \geq \ldots \geq \lambda_{n}$ the $n$ (real) eigenvalues of $A_n$ and we consider the point process on $(0, \infty)$ of (normalized) positive eigenvalues of $A_n$:

$$\mathcal{P}_n = \sum \delta_{b_n^{-1}\lambda_i}\indi_{\lambda_i > 0}.$$
\begin{theorem}\label{thmw}
We assume (\ref{def1}) with $0 < \alpha < 4$. For $2 \leq \alpha < 4$ we also assume that the entries are centered, i.e $\E (a_{ij})=0$. The random point process $\mathcal{P}_n$ converges in distribution  to the Poisson Point Process $\mathcal{P}$ defined on $(0, \infty)$ with intensity $\rho(x)= \frac{\alpha}{x^{1 + \alpha}}.$
\end{theorem}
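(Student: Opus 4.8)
The idea is that when the tail is heavy ($\alpha < 4$), a few exceptionally large entries dominate the spectrum. Heuristically, a single very large entry $a_{ij}$ with $i \neq j$ sitting in a symmetric matrix contributes a $2\times 2$ block $\begin{pmatrix} 0 & a_{ij} \\ a_{ij} & 0\end{pmatrix}$ whose eigenvalues are $\pm |a_{ij}|$, and well-separated large entries act almost independently. So I'd want to show that $\mathcal P_n$ is asymptotically the same point process as $\sum_{i<j} \delta_{b_n^{-1}|a_{ij}|} \mathbf 1_{|a_{ij}| \text{ large}}$, which by the stated extreme-value fact converges to the Poisson process $\mathcal P$ with intensity $\rho$.

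First I would set a truncation level: fix $\varepsilon>0$ and split $A_n = A_n^{>} + A_n^{\le}$, where $A_n^{>}$ keeps the entries with $|a_{ij}| > \varepsilon b_n$ and $A_n^{\le}$ keeps the rest. The point process $\hat{\mathcal P}_n$ has only finitely many points above $\varepsilon$ in the limit (with high probability), and with probability tending to $1$ no two of these large entries share a row or column, nor do three of them form a triangle; hence $A_n^{>}$ is, up to relabeling, a direct sum of $2\times 2$ anti-diagonal blocks plus zeros, and its positive eigenvalues are exactly the large $|a_{ij}|$'s with $i<j$. (One must also check that large diagonal entries contribute negligibly: for $2 \le \alpha$, after centering they don't appear at scale $b_n$; for $\alpha < 2$ one needs a separate argument, but there are only $n$ of them versus $\sim n^2/2$ off-diagonal ones, so after rescaling by $b_n \sim n^{2/\alpha}$ the largest diagonal entry is $o(b_n)$ — this deserves care.) So $\mathcal P(A_n^{>})$ already converges to $\mathcal P$.

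The main work is the perturbation estimate: I must show that adding back $A_n^{\le}$ does not move the relevant eigenvalues on the scale $b_n$. By Weyl's inequality it suffices to control $\|A_n^{\le}\|$, but $\|A_n^{\le}\|$ is typically of order $b_n$ or larger when $\alpha<2$, so a crude operator-norm bound is \emph{not} enough and this is the crux of the difficulty. Instead I expect one needs a finer argument: further decompose $A_n^{\le} = B_n + C_n$ where $B_n$ collects entries in an intermediate window $(\delta b_n, \varepsilon b_n]$ (again finitely many, again a block structure, contributing eigenvalues $\le \varepsilon b_n$) and $C_n$ collects entries $\le \delta b_n$. For $C_n$, the entries are small enough that moment methods / a truncated-variance computation give $\|C_n\| = o(b_n)$ with high probability when $2 \le \alpha < 4$ (here the centering and the finiteness of the second moment, or near-second moment, are used to kill the mean and bound $\E\,\mathrm{Tr}\,C_n^{2k}$). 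For $\alpha < 2$ the variance diverges and $C_n$ must be truncated even further and handled via a rank/resolvent argument rather than a norm bound. Finally, a continuity/interlacing argument (Weyl plus the fact that the limiting Poisson process is simple and has no atom at any fixed level) lets me pass from "eigenvalues of $A_n^{>} + B_n$ are within $\varepsilon b_n + o(b_n)$ of those of $A_n^{>}$" to convergence of $\mathcal P_n$ itself, after letting $\varepsilon \to 0$. I'd package the last step as: for every continuous compactly supported $f$ on $(0,\infty)$, $\E e^{-\mathcal P_n(f)} \to \E e^{-\mathcal P(f)}$, controlling the error by the $\varepsilon$-perturbation on the finitely many relevant eigenvalues.

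I expect the genuinely hard part to be the uniform (in $n$) control of $\|C_n\|$ in the regime $0<\alpha<2$, where no second moment exists; there the symmetrization must be combined with an additional truncation at a level $b_n / (\log n)$ or similar, and one estimates the operator norm of the remaining sparse matrix via its trace of a high power, keeping track of the slowly varying function $L$. The off-diagonal-versus-diagonal comparison and the "no shared rows/columns among large entries" combinatorics are routine Poisson-approximation facts (second-moment method on the number of bad configurations), and I would only sketch them.
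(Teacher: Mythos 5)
Your high-level plan — isolate the few genuinely large entries, show they produce eigenvalues $\approx |a_{ij}|$ via $2\times 2$ block structure, control the remainder by a trace bound, and close with Weyl — is essentially the paper's strategy in the regime $2 \le \alpha < 4$, and the combinatorial ``no shared row/column'' facts you sketch are indeed the content of the paper's key lemma (Lemma \ref{entries}). But there are two substantive gaps.

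First, your truncation level is too high. You propose to put everything below $\delta b_n$ (with $\delta>0$ fixed) into the bulk matrix $C_n$ and bound $\E\,\mathrm{Tr}\,C_n^{2k}$. Since $b_n\sim n^{2/\alpha}$, entries of $C_n$ can be of size $n^{2/\alpha}$, and the truncated $2k$-th moments grow like $n^{(2/\alpha)(2k-\alpha)}$. When you run the high-power trace estimate with $s_n\to\infty$, these moments overwhelm the combinatorial gain from the path-counting and the bound degrades. The paper instead truncates at $n^{\beta}$ with $\beta$ chosen strictly below $2/\alpha$ (and below $\frac{2(8-\alpha)}{\alpha(10-\alpha)}$); this gap between $\beta$ and $2/\alpha$ is exactly what makes the trace estimate (Proposition \ref{propw}) close and give $\|A_1\|=o(b_n)$. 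Your intermediate band $B_n=(\delta b_n,\varepsilon b_n]$ is then not needed, but one still must separately verify (as the paper does in Lemma \ref{lem2} and Lemma \ref{lem3}) that the row sums of the ``large'' part $A_2$ are dominated by the single largest entry in each row, so that $\lambda_1(A_2)=\max|a_{ij}|(1+o(1))$.

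Second, and more importantly, you have the difficulty backwards for $\alpha<2$. You flag this as the hard regime because the variance diverges and a trace bound on $\|C_n\|$ fails, and you gesture at an unspecified ``rank/resolvent argument.'' The paper avoids operator-norm control entirely there: for any real symmetric matrix, $\lambda_1 \le \|A_n\|_\infty = \max_i\sum_j|a_{ij}|$, and the combinatorial Lemma \ref{entries} (parts (c),(d)) shows that with probability $\to 1$ each row sum is dominated by its single largest term, so $\|A_n\|_\infty = \max_{ij}|a_{ij}|(1+o(1))$. Combined with the trivial Rayleigh--Ritz lower bound this gives $\lambda_1/\max|a_{ij}|\to 1$ directly, no truncation or moment method needed. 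Because of this trick the $\alpha<2$ case is actually the \emph{easier} one; the truncation machinery is only required once a finite second moment forces a nontrivial semicircle bulk at scale $\sqrt n$ that could otherwise compete. Finally, to pass from per-eigenvalue convergence to convergence of the point process $\mathcal P_n$, the paper proceeds through Cauchy interlacing (to control the $k$-th eigenvalue of $A_n$ by the top eigenvalue of a minor) and Kallenberg's criterion on intervals $(a,b]$; your Laplace-functional closing step would also need an argument ruling out mass accumulating near $0$, which the interval criterion handles cleanly.
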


This result thus shows that the largest eigenvalues of $A_n$ behave as the largest entries of the matrix $A_n$ when $0 < \alpha < 4$. It was proved in the range $0 < \alpha < 2$ by Soshnikov \cite{Sos1}. It implies for instance that the maximum eigenvalue has a Fr\'{e}chet limit distribution:

\begin{corollary}\label{cor1}
\begin{equation}
\lim_{n\rightarrow\infty} \Pro (\frac{1}{b_{n}}\lambda_1 \leq x ) = \exp (-x^{-\alpha}).
\end{equation}
\end{corollary}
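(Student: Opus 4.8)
The plan is to read off the corollary from Theorem~\ref{thmw} by the usual argument that convergence of a point process forces convergence of its largest atom. Since $x>0$, we have $\lambda_i>b_nx$ for some $i$ if and only if $\mathcal{P}_n$ has an atom in $(x,\infty)$, while every nonpositive eigenvalue automatically satisfies $\lambda_i\le b_nx$; consequently
\begin{equation*}
\Pro\Big(\tfrac{1}{b_n}\lambda_1\le x\Big)=\Pro\big(\mathcal{P}_n((x,\infty))=0\big).
\end{equation*}
It therefore suffices to prove that this void probability converges to the void probability of the limiting process $\mathcal{P}$ on $(x,\infty)$, which, since $\mathcal{P}$ is Poisson with intensity $\rho$, equals
\begin{equation*}
\exp\Big(-\int_x^{\infty}\frac{\alpha}{y^{1+\alpha}}\,dy\Big)=\exp(-x^{-\alpha}),
\end{equation*}
precisely the asserted Fr\'echet limit.

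To pass to the limit I would first handle the easy inclusion. For any $M>x$ the interval $(x,M)$ is relatively compact in $(0,\infty)$ and its boundary $\{x,M\}$ is $\mathcal{P}$-null (the intensity $\rho\,dy$ has no atoms), so $(x,M)$ is a continuity set and Theorem~\ref{thmw} yields $\Pro(\mathcal{P}_n((x,M))=0)\to\exp(-\int_x^M\rho)$. Since $\{\mathcal{P}_n((x,\infty))=0\}\subseteq\{\mathcal{P}_n((x,M))=0\}$, this gives $\limsup_n\Pro(\tfrac{1}{b_n}\lambda_1\le x)\le\exp(-\int_x^M\rho)$, and letting $M\to\infty$ produces the upper bound $\exp(-x^{-\alpha})$. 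For the matching lower bound, from $\{\mathcal{P}_n((x,\infty))=0\}\supseteq\{\mathcal{P}_n((x,M))=0\}\setminus\{\lambda_1\ge b_nM\}$ we get
\begin{equation*}
\Pro\big(\mathcal{P}_n((x,\infty))=0\big)\ge\Pro\big(\mathcal{P}_n((x,M))=0\big)-\Pro\big(\lambda_1\ge b_nM\big),
\end{equation*}
and since the first term tends to $\exp(-\int_x^M\rho)\to\exp(-x^{-\alpha})$ as first $n\to\infty$ then $M\to\infty$, the conclusion follows provided $\lim_{M\to\infty}\limsup_{n\to\infty}\Pro(\lambda_1\ge b_nM)=0$.

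That last tightness statement is the only point requiring more than formal manipulation, and I expect it to be essentially contained in the proof of Theorem~\ref{thmw}: that argument must already control, uniformly in $n$, the number of eigenvalues of $A_n$ exceeding a large multiple of $b_n$ (this is the tightness at $+\infty$ of the processes $\mathcal{P}_n$), and a bound of the form $\Pro(\mathcal{P}_n((M,\infty))\ge1)\le CM^{-\alpha}+o(1)$ is exactly what is needed. Alternatively, and perhaps more cleanly, one can regard Theorem~\ref{thmw} as asserting convergence of point processes on the compactified half-line $(0,\infty]$; then $[x,\infty]$ is a compact continuity set and the convergence of void probabilities, hence of the distribution of $\lambda_1/b_n$, is immediate. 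Finally, the limit $x\mapsto\exp(-x^{-\alpha})$ being continuous, the distinction between $\lambda_1\le b_nx$ and $\lambda_1<b_nx$ is irrelevant, completing the proof.
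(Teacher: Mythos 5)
Your deduction from Theorem~\ref{thmw} is formally correct: the identification of the event $\{\lambda_1/b_n\le x\}$ with the void probability of $\mathcal{P}_n$ on $(x,\infty)$, the approximation by bounded continuity sets $(x,M)$, and the tightness-at-infinity argument are all standard and sound. But this is not the route the paper takes, and it would in fact be circular if inserted into the paper as written: there, Corollary~\ref{cor1} is established \emph{before} Theorem~\ref{thmw} and is used as an input to the theorem's proof. Specifically, the paper first shows $\lambda_1=a_{i_1j_1}(1+o(1))$ in probability, where $a_{i_1j_1}$ is the entry of largest modulus, by combining the Rayleigh--Ritz lower bound $\lambda_1\ge\langle A_nv,v\rangle$ for a suitably chosen vector $v$ supported on the positions of the largest entry, and the upper bound $\lambda_1\le\|A_n\|_\infty=\max_i\sum_j|a_{ij}|$, with the comparison $\|A_n\|_\infty=\max_{ij}|a_{ij}|(1+o(1))$ coming from the structural Lemma~\ref{entries} on the placement and size of the largest entries. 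Since $a_{i_1j_1}/b_n$ has a Fr\'echet limit by classical extreme-value theory for i.i.d.\ heavy-tailed variables, the corollary follows directly. The theorem is then deduced from this by Cauchy interlacing and a perturbation argument applied to submatrices, which extends the $k=1$ statement to arbitrary finite $k$, and from there to the point-process convergence. So the two approaches are genuinely different: yours buys a clean, two-line reduction once the theorem is granted, while the paper's gives the self-contained estimate on $\lambda_1$ that the theorem itself requires. If you want a proof that stands independently of Theorem~\ref{thmw}, you need the direct comparison of $\lambda_1$ with $a_{i_1j_1}$; the point-process statement cannot be your starting point here.
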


One word of comment is in order here. When the entries have light tails, it is well-known that the random field of largest eigenvalues is not Poissonian but determinantal, and that the fluctuations of the top eigenvalue are asymptotically distributed as in the GOE, i.e have a Tracy-Widom distribution \cite{Sos3}. We actually believe that the universal Tracy-Widom picture holds as soon as $\alpha>4$, see \cite{Bou} for a discussion and simulation. Some steps in this direction have been achieved by A. Ruzmaikina \cite{Ruz}, who proves that the Tracy-Widom limit holds for $\alpha$ large enough. (She claimed that $\alpha>18$ is enough, we believe that the arguments of \cite{Ruz} only work if $ \alpha > 36$, see Remark \ref{Ruzma}.)



Let us first consider the case $\alpha > 2$. It is well known that, if \begin{equation}\mu_n = \frac{1}{n}\sum \delta_{\frac{\lambda_i}{\sqrt{n}}} \end{equation} denotes the spectral measure of $\frac{A_n}{\sqrt{n}}$, $\mu_n$ converges weakly almost surely to a non-random limit, the semi-circle law,
\begin{equation}
\nu(x)= 
\begin{cases}
\frac{1}{2\pi\sigma^2}\sqrt{4\sigma^2 - x^2}, \quad \text{if} \quad |x|\leq 2\sigma \\
0, \quad \text{otherwise.}
\end{cases}
\end{equation} which depends only on the variance $\sigma^2$ of the entries. In the case where $\alpha > 4$, Bai and Yin \cite{Yin} have proved that the top eigenvalue sticks to the bulk, i.e that for all $\epsilon > 0$, $$\Pro (|\frac{1}{\sqrt{n}}\lambda_1 - 2\sigma| \leq \epsilon )\rightarrow 1 \quad, \quad n \rightarrow \infty.$$ This shows that for $\alpha > 4$, $b_n^{-1}\lambda_1 \rightarrow \infty$ so that our result in Corollary \ref{cor1} ceases to be true. Our result shows that $\frac{1}{\sqrt{n}}\lambda_1$ is, roughly speaking, of order $n^{2/\alpha - 1/2}$ and thus diverges. This is in agreement with and sharpens Bai and Yin's result, who have shown that a finite fourth moment is necessary to have the convergence to the edge of the bulk. 

The case $\alpha = 4$ with infinite fourth moment seems very interesting and still open. It might exhibit an interesting transition between the Tracy-Widom regime and the Poissonian one.  

Coming back now to the case $\alpha < 2$, the situation is different. The bulk itself is not a semi-circle. It was recently proved by Ben Arous-Guionnet \cite{GBA}  that the spectral measure $$\hat{\mu_n} = n^{-1}\sum \delta_{\frac{\lambda_i}{c_n}},$$ where $$c_n = \inf\{x: 1 - F(x) \leq \frac{1}{n}\}$$ converges to a limiting distribution $\mu_\alpha$. This limit probability distribution $\mu_{\alpha}$ is not compactly supported and has a polynomial tail of type $\frac{C_\alpha}{x^{1+\alpha}}dx$ for some constant $C_{\alpha}$. In this case ($\alpha < 2$), this is perfectly compatible with the present result: the extreme values of iid random variables with that distribution $\mu_{\alpha}$ would have exactly the behavior we have given.

We also study in this paper the behavior of the top of the spectrum for another very important family of random matrices, i.e the ensemble of large random sample covariance matrices.

In this setting, we consider $A_n$ a $n \times p$ random matrix with i.i.d centered entries $(a_{ij}), 1 \leq i \leq n, 1 \leq j \leq p$ and  we define as usual the sample covariance matrix $X_n = \frac{1}{p} A_nA_n^t$. The asymptotic behavior of the bulk of $X_n$ is also well-known by the classical result of Marchenko-Pastur \cite{Mar}, in the case where we assume a finite second moment. The case where $0< \alpha < 2$ is treated in \cite{GBA2}.

Similarly to (\ref{bnw}), let $$b_{np}=\inf\{x: 1 - F(x) \leq \frac{1}{np}\}.$$ If $\lambda_{1} \geq \ldots \geq \lambda_n$ are the ordered eigenvalues of $A_nA_n^t$ and $\lim_{n\rightarrow\infty} \frac{p}{n} = \gamma$ for some positive constant $\gamma \geq 1$ defining $$\mathcal{P}_n =\sum_i \delta_{b_{np}^{-2}\lambda_i}$$  we have the following:

\begin{theorem}\label{thm1}
We assume (\ref{def1}) with $0 < \alpha < 4$. For $2 \leq \alpha < 4$, we also assume that the entries are centered, i.e, $\E (a_{ij})=0$. The random point process $\mathcal{P}_n$ converges in distribution, as $p$ goes to infinity, to the Poisson Point Process $\mathcal{P}$ defined on $(0, \infty)$ with intensity $\rho(x)= \frac{\alpha}{2x^{1 + \alpha/2}}.$

\end{theorem}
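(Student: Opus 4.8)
The plan is to reduce Theorem~\ref{thm1} to Theorem~\ref{thmw} by exhibiting a symmetric matrix whose positive spectrum governs the top eigenvalues of $A_nA_n^t$, and then to show that the heavy-tailed structure makes the relevant corrections negligible. Concretely, the eigenvalues of $A_nA_n^t$ (with $p\ge n$) coincide with the squares of the singular values of $A_n$, which are in turn the positive eigenvalues of the $(n+p)\times(n+p)$ symmetric block matrix
\begin{equation}
\widetilde{A} = \begin{pmatrix} 0 & A_n \\ A_n^t & 0 \end{pmatrix}.
\end{equation}
So if $\mathcal{P}_n^{\mathrm{sym}}=\sum_i \delta_{b_{np}^{-1}s_i}$ is the point process of the (normalized) positive eigenvalues $s_i$ of $\widetilde{A}$, then $\mathcal{P}_n$ is exactly the pushforward of $\mathcal{P}_n^{\mathrm{sym}}$ under the map $x\mapsto x^2$. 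Since a pushforward by a smooth bijection of $(0,\infty)$ commutes with Poisson convergence, it suffices to prove that $\mathcal{P}_n^{\mathrm{sym}}$ converges to a Poisson process, and then compute the intensity: if $s$ has intensity $\alpha x^{-1-\alpha}$, the density of $s^2$ is obtained by the change of variables $y=x^2$, $dy = 2x\,dx$, giving intensity $\tfrac{\alpha}{2} y^{-1-\alpha/2}$, which matches the statement. The normalization is consistent because $\widetilde A$ has $\sim np$ independent above-diagonal entries (the $n\times p$ block), so $b_{np}=\inf\{x:1-F(x)\le 1/(np)\}$ is precisely the threshold at which the largest of those $\sim np$ entries is of order one; this is the analogue of $b_n$ for a single $n\times n$ symmetric matrix with $\sim n^2/2$ entries.

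The substance of the argument, then, is the Poisson convergence for $\mathcal{P}_n^{\mathrm{sym}}$, and here I would run the same strategy used for Theorem~\ref{thmw} rather than invoke it as a black box, because $\widetilde A$ is not a standard symmetric Wigner-type matrix (it has a deterministic zero-block structure, and its aspect is rectangular so $p/n\to\gamma$ rather than $1$). The idea is the heuristic already stressed in the introduction: a large singular value of $A_n$ comes from a single large entry $a_{ij}$, which contributes a $2\times 2$ block $\bigl(\begin{smallmatrix}0 & a_{ij}\\ a_{ij} & 0\end{smallmatrix}\bigr)$ with eigenvalues $\pm|a_{ij}|$, and the rest of the matrix only perturbs this at lower order. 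One makes this precise by (i) truncating: fix a level $L>0$ and split $A_n = A_n^{>} + A_n^{\le}$ according to whether $|a_{ij}|>Lb_{np}$; (ii) showing, via a moment/trace bound on $A_n^{\le}(A_n^{\le})^t$ exploiting $\alpha<4$ (this is where the absence of a fourth moment is crucial and where the centering assumption for $\alpha\ge 2$ enters, to kill the mean part that would otherwise shift the spectrum by $O(\sqrt{np})$), that the ``small'' part has operator norm $o(b_{np}^2/\sqrt{?})$ — more precisely $o(b_{np}^2)$ on the squared scale, i.e. $o(b_{np})$ on the singular-value scale — so it does not affect the point process above any fixed threshold in the limit; (iii) showing that the ``large'' part, with high probability, has at most one large entry per row and per column (again because the $\sim np$ large entries are spread out, by the Poisson behavior of $\widehat{\mathcal P}_n$-type processes), so that $A_n^{>}$ is, up to relabeling, block-diagonal with blocks carrying exactly the entries $a_{ij}$ with $|a_{ij}|>Lb_{np}$; its singular values are then exactly these $|a_{ij}|$, whose point process converges to Poisson with intensity $\alpha x^{-1-\alpha}$ by the extreme-value statement for the entries; (iv) letting $L\to 0$ and controlling the interaction/off-block terms by a perturbation argument (Weyl's inequalities for singular values plus the operator-norm bound from (ii)).

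The main obstacle I expect is step (ii): controlling the operator norm, or more robustly a suitable high moment of the trace, of the truncated sample covariance matrix $A_n^{\le}(A_n^{\le})^t$ in the regime $2\le\alpha<4$, where the truncated entries have a finite second moment but a heavy (non-integrable fourth-moment) tail just below the truncation level $Lb_{np}$. The bound has to be uniform enough in $L$ to survive the $L\to0$ limit, and the naive second-moment estimate gives only $\|A_n^{\le}(A_n^{\le})^t\|\lesssim p\cdot\E[(a^{\le})^2]\sim p$ which is $\gg b_{np}^2\sim (np)^{4/\alpha}$ exactly when $\alpha<4$ — good — but one must also show the spectral \emph{edge} of this matrix, not just its trace, sits below $\varepsilon b_{np}^2$, which requires a genuine norm bound; this is the analogue of the hard trace estimate in Soshnikov's and the symmetric-case argument, and is where the combinatorics of non-backtracking paths weighted by truncated heavy-tailed entries, together with the centering, do the real work. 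Once that estimate is in hand, steps (i), (iii), (iv) are the same soft arguments as in the proof of Theorem~\ref{thmw}, and the passage to $A_nA_n^t$ via the block matrix $\widetilde A$ and the squaring map is routine.
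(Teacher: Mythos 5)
Your reformulation via the $(n+p)\times(n+p)$ block matrix $\widetilde A = \bigl(\begin{smallmatrix}0 & A_n\\ A_n^t & 0\end{smallmatrix}\bigr)$ and the pushforward $x\mapsto x^2$ is valid and does give the claimed intensity $\tfrac{\alpha}{2}y^{-1-\alpha/2}$; it is a clean way to package the reduction, even though the paper does not use it (it works directly with $X_n=A_nA_n^t$, using $\lambda_1 \le \|X_n\|_\infty \le \|A_n\|_\infty \|A_n\|_{1\to 1}$ for $0<\alpha<2$, and a Weyl-plus-Cauchy-Schwarz splitting $\lambda_{\max}(X_n)\le \lambda_{\max}(X_1)+\lambda_{\max}(A_2A_2^t)+2\sqrt{\lambda_{\max}(X_1)\lambda_{\max}(A_2A_2^t)}$ for $2\le\alpha<4$). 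Your steps (iii)--(iv) and the passage through the interlacing theorem for the top $k$ eigenvalues also match the paper's structure.

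There is, however, a genuine gap in step (ii), which is the hard estimate you correctly identify as the crux. First, even taken at face value, the statement is wrong: for \emph{fixed} $L>0$, the truncated matrix $A_n^{\le}$ still contains, with probability bounded away from $0$, isolated entries of size close to $Lb_{np}$, and a single such entry already forces $\|A_n^{\le}\| \gtrsim Lb_{np}$. So $\|A_n^{\le}\| = o(b_{np})$ is false; the best one can hope for is $O(Lb_{np})$, with $L\to 0$ afterwards. Second, and more seriously, a trace moment bound cannot produce even the bound $O(Lb_{np})$ at this truncation level. Working through the Soshnikov-type combinatorics (the paper's Lemma~\ref{Lem: expectation} and the estimate of $Z_{e,i}$ around (\ref{majoini})), the contribution of self-intersecting paths is controlled only if the entries are truncated at $n^\beta$ with $\beta < \tfrac{2(8-\alpha)}{\alpha(10-\alpha)}$, which is strictly below $\tfrac{2}{\alpha}$; your threshold $Lb_{np}\sim L\,n^{2/\alpha}$ corresponds to $\beta = 2/\alpha$ and violates this, so the self-intersection sum blows up for any sequence $s_n\to\infty$. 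The paper avoids this by a \emph{two-scale} decomposition: a genuinely small block $A_1$ with entries $\le n^\beta$ (where the trace moment method gives $\|A_1\|=O(n^{2/\alpha-\epsilon})=o(b_{np})$), and an ``intermediate-plus-large'' block $A_2$ with entries $> n^\beta$, whose spectral behaviour is controlled not by traces but by the entry-placement and row-/column-sum lemmas (Lemma~\ref{lem4}, analogue of Lemma~\ref{lem2} and Lemma~\ref{lem3}). Your plan collapses these two distinct scales into one, and the proposed tool (trace moments) is the wrong one for the scale you put it at. Finally, note the paper also uses a small comparison trick you would need in your framework as well: to get the trace bound for $A_1A_1^t$, it dominates even-path contributions by those of a symmetric $p\times p$ Wigner matrix with the same entry distribution (inequality (\ref{lala})), which is precisely the device that lets one recycle the Wigner estimates of Section~\ref{wigner} for the rectangular case.
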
 

Again, as a simple corollary, we obtain the behavior of the maximal eigenvalue: 

\begin{corollary}\label{cor2}
\begin{equation}
\lim_{n\rightarrow\infty} \Pro (\frac{1}{b_{np}^2}\lambda_1 \leq x ) = \exp (-x^{-\frac{\alpha}{2}}).
\end{equation}
\end{corollary}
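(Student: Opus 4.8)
This follows immediately from Theorem \ref{thm1} by the classical argument that recovers the law of the maximum from the convergence of the associated point process; it is the exact analogue, in the sample covariance setting, of how Corollary \ref{cor1} is deduced from Theorem \ref{thmw}. First, since $A_nA_n^t$ is positive semidefinite, all of its eigenvalues are nonnegative, and for every fixed $x>0$ one has the elementary identity of events
$$\Big\{\, b_{np}^{-2}\lambda_1 \le x \,\Big\} \;=\; \Big\{\, \mathcal{P}_n\big((x,\infty)\big) = 0 \,\Big\},$$
because $\mathcal{P}_n((x,\infty))$ is exactly the (finite) number of eigenvalues of $A_nA_n^t$ exceeding $x\,b_{np}^2$, and this number vanishes precisely when $\lambda_1 \le x\,b_{np}^2$.

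Next, by Theorem \ref{thm1} the process $\mathcal{P}_n$ converges in distribution to the Poisson point process $\mathcal{P}$ with intensity $\rho(t)=\frac{\alpha}{2t^{1+\alpha/2}}$. Since $\rho$ is integrable on $[x,\infty)$ for every $x>0$, the limit $\mathcal{P}$ almost surely has only finitely many points in $(x,\infty)$, and, being Poisson with a non-atomic intensity, almost surely has no atom at the fixed location $x$. Consequently $(x,\infty)$ is a continuity set for $\mathcal{P}$: the map $\mu\mapsto\mu((x,\infty))$ is almost surely continuous at $\mathcal{P}$ in the vague topology, so the continuous mapping theorem (the limiting count being integer valued) gives $\Pro(\mathcal{P}_n((x,\infty))=0)\to\Pro(\mathcal{P}((x,\infty))=0)$. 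Finally $\mathcal{P}((x,\infty))$ is a Poisson random variable with mean $\int_x^\infty\frac{\alpha}{2t^{1+\alpha/2}}\,dt=x^{-\alpha/2}$, whence $\Pro(\mathcal{P}((x,\infty))=0)=\exp(-x^{-\alpha/2})$; combining this with the event identity above, and noting that $p\to\infty$ is the same limit as $n\to\infty$ under $p/n\to\gamma\ge 1$, yields the stated Fr\'echet limit.

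There is no genuine difficulty in this deduction once Theorem \ref{thm1} is available; the only point that calls for a word of care is the verification that $(x,\infty)$ is a $\mathcal{P}$-continuity set --- i.e.\ that $x$ is a.s.\ not an atom and that $\mathcal{P}((x,\infty))<\infty$ a.s.\ --- which is precisely what licenses passing from weak convergence of the point processes to convergence of the avoidance probabilities, and which is guaranteed here by the explicit form of the intensity $\rho$.
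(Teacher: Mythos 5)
Your deduction of Corollary~\ref{cor2} from Theorem~\ref{thm1} is mathematically correct in isolation: if one already knows that $\mathcal{P}_n$ converges in distribution to the Poisson point process $\mathcal{P}$ with intensity $\rho(t)=\tfrac{\alpha}{2}t^{-1-\alpha/2}$, then the avoidance-probability computation you carry out does give the Fr\'echet limit $\exp(-x^{-\alpha/2})$ for $b_{np}^{-2}\lambda_1$. The technical points you raise (integrability of $\rho$ on $(x,\infty)$, continuity of $\mu\mapsto\mu((x,\infty))$ at $\mathcal{P}$) are handled correctly.

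However, in the paper the logical dependency runs the other way, so as a proof to be inserted into this paper your argument is circular. The paper establishes Corollary~\ref{cor2} \emph{first}, by a direct argument: it shows that $\lambda_1=a_{i_1j_1}^2(1+o(1))$ in probability, getting the lower bound from the Rayleigh--Ritz quotient with the test vector $e_{i_1}$, and the upper bound from the estimate $\lambda_1\leq\|X_n\|_\infty\leq\|A_n\|_\infty\|A_n\|_1=a_{i_1j_1}^2(1+o(1))$, both controlled by Lemma~\ref{lem1}; the Fr\'echet limit then follows from classical extreme-value theory applied to $a_{i_1j_1}$. Corollary~\ref{cor2} is then an essential ingredient in the proof of Theorem~\ref{thm1}: after showing that $a_{i_lj_l}^2(1+o(1))$ are eigenvalues of $X_n$, the paper invokes Corollary~\ref{cor2} for $l=1$, and then, via Cauchy interlacing applied to the compressed matrices $X_n^{(2)},X_n^{(3)},\ldots$, invokes Corollary~\ref{cor2} repeatedly to show these are in fact the top $k$ eigenvalues. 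So deriving Corollary~\ref{cor2} from Theorem~\ref{thm1} does not supply the substantive work that the corollary requires (the identification $\lambda_1 = a_{i_1j_1}^2(1+o(1))$), and if used in place of the paper's argument it would leave the proof of Theorem~\ref{thm1} without a base case.
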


The rest of the paper is organized as follows. First, in section \ref{Soss} we recall briefly the main results contained in \cite{Sos1}, i.e, the proof of Theorem \ref{thmw} in the case $0 < \alpha < 2$. We then prove Theorem \ref{thm1} in the case $0 < \alpha < 2$ in Section \ref{cov0}. We then study in section \ref{wigner} and \ref{cov4} the case where $2 \leq \alpha < 4$ for the Wigner and the sample covariance matrix cases respectively. This is a bit different in nature since we now have to perform a more subtle separation of scales. This is done through an estimate of traces of high powers of our random matrices properly truncated. This combinatorial part of the proof draws on the work of Soshnikov \cite{Sos3} and on the recent work by P\'{e}ch\'{e}-Soshnikov \cite{SosPec}.


\section{Wigner matrices when $0 < \alpha < 2$.}\label{Soss}

In this section,  we will recall the results in the paper of Soshnikov \cite{Sos1} (see also \cite{Sos4} for precise statement of Lemma \ref{entries}).

Let $A_n$ be $n \times n$ random (real) symmetric matrix with iid entries satisfying (\ref{def1}) with $0 < \alpha < 2$. Also let $\lambda_1 \geq \lambda_2 \geq \lambda_3 \geq \ldots \geq \lambda_n$ be its eigenvalues and $a_{i_lj_l}$ its $l$-th largest entry in absolute value.   



In order to prove Theorem \ref{thmw} in this case, Soshnikov \cite{Sos1} proceeds as follows. The basic idea is to show that for each finite $k$, and for each given $\epsilon > 0$, 
\begin{equation}\label{s2}
\Pro(|\frac{\lambda_k}{a_{i_kj_k}} - 1| > \epsilon) \rightarrow 0 \quad \text{as} \quad n \rightarrow \infty.
\end{equation}

We first consider the case where $k=1$, which implies corollary \ref{cor1}. The following  crucial lemma is purely probabilistic. It describes how the largest entries are placed in the matrix. This lemma will be adapted in all other sections.

\begin{lem}\label{entries}
\begin{enumerate}

\item With probability going to one, there are no diagonal entries greater in absolute value than $b_n^{11/20}.$ 

\item With probability going to one, there is no pair $(i, j)$ such that $|a_{ij}| > b_n^{99/100}$ and $|a_{ii}| + |a_{jj}| > b_n^{1/10}.$

\item For any positive $\delta > 0$ with probability going to one there is no row that has at least two entries greater in absolute value than $b_n^{3/4}+\delta$.

\item With probability going to one, there is no row such that its maximum and the sum of the absolute value of the remaining elements in the row are both greater than $b_n^{3/4 + \alpha/8}.$
\end{enumerate}
\end{lem}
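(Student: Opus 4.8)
The plan is to prove all four assertions by a union bound, the only analytic input being the regular‐variation estimate $\bar F(b_n^{c})=n^{-2c+o(1)}$, valid for every fixed $c>0$: this is immediate from $b_n\sim L_o(n)n^{2/\alpha}$ together with the fact that a slowly varying function is $n^{o(1)}$ (Potter's bounds), since $\bar F(b_n^{c})=L(b_n^{c})(b_n^{-\alpha})^{c}$ and $b_n^{-\alpha}=\bar F(b_n)/L(b_n)=n^{-2+o(1)}$. Granting this: (a) follows since $n\,\Pro(|a_{11}|>b_n^{11/20})=n\,\bar F(b_n^{11/20})=n^{-1/10+o(1)}\to0$; (b) follows since for $i\ne j$ the entries $a_{ij},a_{ii},a_{jj}$ are independent and $|a_{ii}|+|a_{jj}|>b_n^{1/10}$ forces $\max(|a_{ii}|,|a_{jj}|)>b_n^{1/10}/2$, so the bad event at $(i,j)$ has probability at most $\bar F(b_n^{99/100})\cdot2\bar F(b_n^{1/10}/2)=n^{-109/50+o(1)}$ and $109/50>2$; (c) follows since the probability that two fixed entries in a common row both exceed $b_n^{3/4+\delta}$ is $\bar F(b_n^{3/4+\delta})^2=n^{-3-4\delta+o(1)}$, there are $O(n^3)$ such pairs of entries, and $n^{-4\delta+o(1)}\to0$ for every $\delta>0$. (I read the threshold in (c) as $b_n^{3/4+\delta}$; a threshold of the form $b_n^{3/4}+\delta$ would not absorb the slowly varying prefactors.)

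Statement (d) is the substance and rests on (c). Set $B:=b_n^{3/4+\alpha/8}$ and fix $\delta\in(0,\alpha/8)$. On the $(1-o(1))$-probability event of (c), every row has at most one entry of modulus $>b_n^{3/4+\delta}$, hence at most one of modulus $>B$; so if row $i$ has maximum $M_i>B$, the maximum is attained at a single position $j_0$, all other entries of that row are $\le b_n^{3/4+\delta}<B$, and the sum of the remaining entries, call it $S_i$, equals $\sum_{j\ne j_0}(|a_{ij}|\wedge B)$. Therefore
\begin{equation*}
\Pro\big(\exists\,i:\ M_i>B,\ S_i>B\big)\ \le\ \Pro\big((c)\ \text{fails}\big)+\sum_i\sum_{j_0}\bar F(B)\,\Pro\Big(\sum_{j\ne j_0}(|a_{ij}|\wedge B)>B\Big),
\end{equation*}
using that $a_{ij_0}$ is independent of $(a_{ij})_{j\ne j_0}$. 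Markov's inequality bounds the last probability by $n\,\E[|a_{12}|\wedge B]/B$, and by Karamata's theorem $\E[|a_{12}|\wedge B]=\int_0^B\bar F=B^{(1-\alpha)_{+}+o(1)}$; thus this bound is $n^{-1/2-\alpha/4+o(1)}$ when $\alpha<1$ and $n^{3/4-3/(2\alpha)+o(1)}$ when $1\le\alpha<2$. Multiplying by the factor $n^2\bar F(B)=n^{1/2-\alpha/4+o(1)}$ from the double sum yields total exponent $-\alpha/2$ for $\alpha<1$ and $\tfrac54-\tfrac\alpha4-\tfrac3{2\alpha}$ for $1\le\alpha<2$, both strictly negative throughout $(0,2)$, so the whole expression tends to $0$.

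I expect (d) to be the only genuine difficulty. Two points need care. First, one must really invoke (c): without knowing that a row carrying one large entry carries no second one, the quantity $S_i$ is not a sum of $n-1$ copies of a heavy‐tailed variable truncated at $B$ and the Markov step is unavailable. Second, the exponent bookkeeping must be done carefully, because the margin by which the $O(n^2)$ union‐bound factor is defeated degenerates to $0$ as $\alpha\uparrow2$ — precisely why the range $2\le\alpha<4$ treated later in the paper cannot be handled this way and instead requires the trace‐of‐high‐powers (separation of scales) method. Throughout, the diagonal entry $a_{ii}$ of row $i$, which may play the role of $M_i$ or sit inside $S_i$, is a harmless extra summand and changes none of the exponents above.
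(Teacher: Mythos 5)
Your proof is correct, but it takes a genuinely different route from the one in the paper. Note first that the paper does not reprove Lemma~\ref{entries}: it simply cites Soshnikov~\cite{Sos1}. The closest argument actually written out in the paper is the proof of the analogous Lemma~\ref{lem1} for sample covariance matrices, and that is what I will compare against. Parts (a)--(c) are the same routine union bounds in both. For the substantive part (d), however, the paper (like Soshnikov) proves the \emph{unconditional} statement that with high probability, simultaneously for all rows, $\sum_{j:|a_{ij}|\leq b_n^{3/4+\alpha/16}}|a_{ij}|\leq b_n^{3/4+\alpha/8}$; to get a tail small enough to survive the union over $n$ rows they resort to a scale decomposition (the ladder $b_n^{k/(2T+1)}$), Chernoff bounds on the number of entries at each scale (Lemma~\ref{ponto}), and a separate count argument for the middle range (Proposition~\ref{grande}). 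Your argument, by contrast, exploits the factor $\bar F(B)\asymp n^{-3/2-\alpha/4+o(1)}$ that comes from demanding $M_i>B$: on the event of (c) the maximizing column $j_0$ is unique, so you factor out $\Pro(|a_{ij_0}|>B)$ by independence and then pay only a crude Markov bound for $\sum_{j\ne j_0}(|a_{ij}|\wedge B)$. The Markov bound alone is hopeless here (for $\alpha$ near $2$, $n\,\E[|a_{12}|\wedge B]/B$ is not even $o(1)$), but combined with $n^2\bar F(B)$ the resulting exponent, $-\alpha/2$ for $\alpha<1$ and $\tfrac54-\tfrac\alpha4-\tfrac3{2\alpha}$ for $1\le\alpha<2$, is strictly negative on $(0,2)$, as you checked. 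What you lose is the superpolynomial rate $n\exp(-n^\theta)$ which the Chernoff route yields and which the paper records in Remark~\ref{rem1}; your bound only decays polynomially. That stronger rate is not in fact needed for the point-process conclusion, so for the lemma as stated your argument is a simpler and self-contained alternative. You are also right that the threshold ``$b_n^{3/4}+\delta$'' in part (c) is a typo for $b_n^{3/4+\delta}$, as the parallel statements in Lemma~\ref{lem1}(a) and Lemma~\ref{lem2}(c) confirm.
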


Once one has proved the previous lemma, the next step is to relate the entries of the matrix with its maximum eigenvalue $\lambda_1$. This can be done in two steps. 

First, one can bound from below the top eigenvalue using the Rayleigh-Ritz representation of $\lambda_1$:

\begin{equation}\label{minmax}
\lambda_1 = \sup_{v:|v|=1} \left\langle A_nv, v\right\rangle .
\end{equation}

Considering a well-chosen vector $v$ in terms of the position of the largest entry of $A_n$, (\ref{minmax}) will provide the inequality
$$\lambda_1 \geq a_{i_1j_1}(1 + o(1)).$$

Secondly, one studies the following norm of the matrix $A_n$, that is the norm of $A_n$ as a linear operator from $l_\infty$ to $l_\infty$:

\begin{equation}\label{norm}
\|A_n\|_{\infty} \equiv \max_{i} \sum_{j=1}^{n} |a_{ij}|.
\end{equation}

Since $A_n$ is symmetric, we can show that $\|A_n\|_{\infty}$ is an upper bound for $\lambda_1$.  Lemma \ref{entries} then relates $\|A_n\|_{\infty}$ to the maximum entry of the matrix $A_n$. In fact, Soshnikov showed that, given $\epsilon > 0$, there exists $\theta>0$ such that for $n$ sufficiently large, one has:

$$\Pro\left(\Big |\frac{\|A_n\|_{\infty}}{\max_{ij} |a_{ij}|} - 1\Big | > \epsilon\right) \leq n\exp(-n^{\theta}).$$ 

This proves (\ref{s2}) when $k = 1$.

Now, for any finite $k$, using Lemma \ref{entries}, it is possible to find some well-chosen unit vectors $v_k$, such that,
$$A_nv_k = a_{i_{k}j_{k}}v_k + r_k,\quad \text{and} \quad \|r_k\| = o(1).$$
This fact together with a standard result in pertubation theory of symmetric matrices (see for instance \cite{Bha} page 77) imply that $A_n$ has eigenvalues  $a_{i_{l}j_{l}} (1 + o(1)), 1 \leq l \leq k$, for any finite $k$.

To finally get Theorem \ref{thmw} we use induction on $k$ in (\ref{s2}), supported by Corollary \ref{cor1} and the following very classical result about symmetric matrices, that can be found for instance in \cite{Bha}, page 59.

\begin{prop}[Cauchy Interlacing Theorem]
Let $A_n$ be an $n \times n$ Symmetric matrix and $\lambda_1 \geq \lambda_2 \geq \ldots \geq \lambda_n$ its ordered (real) eigenvalues. If one considers the restriction $B$ of $A_n$ to any subspace of co-dimension $1$ and denotes by $\mu_1 \geq \mu_2 \geq \ldots \geq \mu_{n-1}$ the eigenvalues of $B$ then 
$$\lambda_1 \geq \mu_1 \geq \lambda_2 \geq \mu_2 \geq \ldots \geq \mu_{n-1} \geq \lambda_n.$$
\end{prop}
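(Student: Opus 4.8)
The plan is to derive the interlacing inequalities directly from the Courant--Fischer variational characterization of eigenvalues. Write $W$ for the codimension-one subspace and let $B = P_W A_n|_W$ denote the orthogonal compression of $A_n$ to $W$, so that $\left\langle B v, v\right\rangle = \left\langle A_n v, v\right\rangle$ for every $v \in W$. Recall that for a symmetric $n\times n$ matrix one has, for $1 \le k \le n$,
\begin{equation}\label{CFminmax}
\lambda_k = \max_{\substack{V \subseteq \R^n \\ \dim V = k}} \ \min_{\substack{v\in V \\ |v|=1}} \left\langle A_n v, v\right\rangle = \min_{\substack{V \subseteq \R^n \\ \dim V = n-k+1}} \ \max_{\substack{v\in V \\ |v|=1}} \left\langle A_n v, v\right\rangle,
\end{equation}
and, since $B$ acts on the $(n-1)$-dimensional space $W$ and its quadratic form is the restriction of that of $A_n$,
\begin{equation}\label{CFminmaxB}
\mu_k = \max_{\substack{V \subseteq W \\ \dim V = k}} \ \min_{\substack{v\in V \\ |v|=1}} \left\langle A_n v, v\right\rangle = \min_{\substack{V \subseteq W \\ \dim V = n-k}} \ \max_{\substack{v\in V \\ |v|=1}} \left\langle A_n v, v\right\rangle.
\end{equation}

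From here both families of inequalities are immediate. For $\mu_k \le \lambda_k$ I would use the first (max--min) formulas: the outer maximum defining $\mu_k$ in \eqref{CFminmaxB} ranges over the $k$-dimensional subspaces contained in $W$, which form a subfamily of all $k$-dimensional subspaces of $\R^n$ appearing in \eqref{CFminmax}, so maximizing over the smaller family can only decrease the value, giving $\mu_k \le \lambda_k$. For $\lambda_{k+1} \le \mu_k$ I would instead use the second (min--max) formulas: the outer minimum defining $\mu_k$ ranges over $(n-k)$-dimensional subspaces of $W$, a subfamily of the $(n-k)$-dimensional subspaces of $\R^n$ that compute $\lambda_{k+1}$ (since $n-(k+1)+1 = n-k$), so minimizing over the smaller family can only increase the value, giving $\mu_k \ge \lambda_{k+1}$. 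Chaining these for $k = 1, \ldots, n-1$ produces exactly
$$\lambda_1 \ge \mu_1 \ge \lambda_2 \ge \mu_2 \ge \ldots \ge \mu_{n-1} \ge \lambda_n.$$

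The only real point requiring care — and essentially the whole content of the proof — is the correct identification of $B$ with the orthogonal compression $P_W A_n|_W$, so that the quadratic forms of $A_n$ and $B$ genuinely agree on $W$ and every subspace of $W$ is a legitimate competitor in the variational problems; granting that, the rest is the elementary monotonicity of $\max$ and $\min$ under shrinking the index set, so there is no serious obstacle. As an alternative I could choose coordinates with $W = e_n^{\perp}$, write $A_n$ in block form with leading block $B$, and study the rational function $t \mapsto \det(A_n - tI)/\det(B - tI) = (c-t) - \sum_i \left\langle b, u_i\right\rangle^2/(\mu_i - t)$, which (when $B$ has simple spectrum and $b$ has no vanishing component) is strictly decreasing from $+\infty$ to $-\infty$ on each open interval between consecutive eigenvalues of $B$ and on the two unbounded intervals, hence has exactly one root in each, forcing the strict interlacing; the general case then follows by a continuity/perturbation argument.
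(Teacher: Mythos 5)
Your proof is correct, and the Courant--Fischer argument you give is exactly the standard one. Note, though, that the paper does not prove this proposition at all: it is stated as a classical fact and referred to Bhatia's \emph{Matrix Analysis}, page 59, where the proof is in fact the same min--max/monotonicity argument you wrote out. The only place that genuinely needed care is the one you flagged yourself, namely that $B$ must be the orthogonal compression $P_W A_n|_W$ so that $\left\langle Bv,v\right\rangle = \left\langle A_n v,v\right\rangle$ on $W$, which makes every subspace of $W$ an admissible competitor in both variational problems; your secondary remark via the rational function $\det(A_n-tI)/\det(B-tI)$ is also a valid (and historically Cauchy's own) route, though the perturbation step needed to pass from simple spectrum to the general case deserves a line more than ``continuity'' if you were to write it out in full.
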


We briefly explain how to use the last statement in this setting. For simplicity, instead of describing the general step of the induction, we describe only the step $k=2$. First, one considers the submatrix obtained by removing the $i_1-$th row and the $j_1-$th column from $A_n$. Clearly, this submatrix has $a_{i_{2}j_{2}}$ as largest entry in absolute value, and by the interlacing property its largest eigenvalue will be greater than $\lambda_2$. Thus, one can apply Corollary \ref{cor1} to finally get (\ref{s2}) for $k=2$. 
\\

To get Theorem $\ref{thmw}$ as stated, i.e, to prove tightness and the convergence of $\mathcal{P}_n$ to $\mathcal{P}$, it suffices to prove, see \cite{Kal} Theorem $16.16$, that for all intervals $(a,b)$, where $0 < a < b$ one has that the random variable $\mathcal{P}_n(a,b)$ converges in distribution to $\mathcal{P}(a,b)$. This can be verified as follows. 

Since $$ \Pro (\mathcal{P}_n(a,\infty)>k)=\Pro(\frac{\lambda_{k+1}}{b_n} > a),$$ it is easy to see that $(\ref{s2})$  proves the convergence in distribution of $\mathcal{P}_n$ when restricted to an interval $(a,\infty)$, $a>0$, i.e the random variable $\mathcal{P}_n(a,\infty)$ converges in distribution to  $\mathcal{P}(a,\infty)$ which is a Poisson Process with parameter $a^{-\alpha}$.

To derive the result for a general interval, we first note that since $$\Pro(\mathcal{P}(\{b\})>0)=0,$$ we can consider intervals $(a,b]$ which can be written as the difference of $I_a=(a, \infty)$ and $I_b=(b, \infty)$. Now, 
\begin{equation}\label{tight}
\Pro(\mathcal{P}_n(a,b] = l) = \sum_{k=l}^{\infty} \Pro(\mathcal{P}_n(a, \infty)=k, \mathcal{P}_n(b, \infty)=k-l).
\end{equation}

Each term inside the sum converges to $\Pro(\mathcal{P}(a, \infty)=k, \mathcal{P}(b, \infty)=k-l)$ and is also bounded by $\Pro(\mathcal{P}_n(a,\infty)=k)$. Since the sum $$\sum_{k=l}^\infty \Pro(\mathcal{P}_n(a, \infty)=k)$$ is finite, an application of Fatou's Lemma shows that (\ref{tight}) converges to $$\Pro(\mathcal{P}(a,b] = l) = \sum_{k=l}^{\infty} \Pro(\mathcal{P}(a, \infty)=k, \mathcal{P}(b, \infty)=k-l).$$ 

The details omitted here will be considered in the next sections, specially the next one, where we follow the same strategy in the case of Sample Covariance matrices with $0 < \alpha < 2$. 


\section{Sample Covariance matrices when $0 < \alpha < 2$.}\label{cov0}

The proof of Theorem \ref{thm1} in this case is based on the following lemma, which is almost identical to Lemma \ref{entries} given in last section. 

\begin{lem}\label{lem1}

Let $A_n$ be a $n\times p$ random matrix with i.i.d. entries $a_{ij}$, $1 \leq i \leq n$, $1 \leq j \leq p$ satisfying (\ref{def1}). Also assume that $\lim_{n\rightarrow\infty} \frac{p}{n} = \gamma$ for some constant $\gamma \geq 1$.
Then:

\begin{enumerate}
	\item If $B_{np}^\delta$ is the event 'There is a row with $2$ entries greater than $b_{np}^\delta$ in absolute value' then 
	$$ \forall \delta > 3/4, \quad \lim_{p\rightarrow\infty} \Pro(B_{np}^\delta) = 0.$$
	
	\item Also, $$\lim_{n\rightarrow\infty} \Pro(\exists i,  1 \leq i \leq n, \quad \max_{1 \leq j \leq p} |a_{ij}| > b_{np}^{\frac{3}{4}+\frac{\alpha}{8}} \quad \text{and} \quad \sum_{j=1}^{p} |a_{ij}| - \max_{1 \leq j \leq p} |a_{ij}| >b_{np}^{\frac{3}{4}+\frac{\alpha}{8}})=0.$$
	
	\item Similarly, $$\lim_{n\rightarrow\infty} \Pro(\exists j,  1 \leq j \leq p, \quad \max_{1 \leq i \leq n} |a_{ij}| >b_{np}^{\frac{3}{4}+\frac{\alpha}{8}} \quad \text{and} \quad \sum_{j=1}^{n} |a_{ij}| - \max_{1 \leq j \leq n} |a_{ij}| >b_{np}^{\frac{3}{4}+\frac{\alpha}{8}})=0.$$
\end{enumerate}
\end{lem}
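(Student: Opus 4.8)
The plan is to prove the three assertions by first-moment estimates, reinforced in (2) and (3) by a Bernstein-type concentration bound. The only probabilistic input is regular variation. Since $\bar F(x)=L(x)x^{-\alpha}$ with $L$ slowly varying, $np\mapsto b_{np}$ is regularly varying in $np$ with index $1/\alpha$ (just as $b_n$ in the introduction, with $np$ in place of $n(n+1)/2$), and therefore $np\mapsto \bar F(b_{np}^{\eta})$ is regularly varying in $np$ with index $-\eta$ for every $\eta>0$. By Potter's bounds, for every $\epsilon>0$ and all $n$ large, $(np)^{-\eta-\epsilon}\le \bar F(b_{np}^{\eta})\le (np)^{-\eta+\epsilon}$; we use this freely, together with $p\sim\gamma n$ (so $np\asymp n^{2}$). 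For (1), a union bound over the $n$ rows and the $\binom{p}{2}$ column-pairs gives, for $\epsilon$ small,
$$\Pro(B_{np}^{\delta})\ \le\ n\binom{p}{2}\,\bar F(b_{np}^{\delta})^{2}\ \le\ C\,np^{2}\,(np)^{-2\delta+2\epsilon}\ \le\ C'\,n^{\,3-4\delta+4\epsilon},$$
which tends to $0$ as soon as $4\epsilon<4\delta-3$ — possible precisely because $\delta>3/4$.

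For (2), put $s=\tfrac34+\tfrac{\alpha}{8}$ and fix an auxiliary exponent $\delta$ with $\tfrac34<\delta<s$ (e.g.\ $\delta=\tfrac34+\tfrac{\alpha}{16}$). By (1) the event $B_{np}^{\delta}$ has probability tending to $0$, so we may work on its complement; there every row has at most one entry exceeding $b_{np}^{\delta}$, so whenever a row satisfies $\max_{j}|a_{ij}|>b_{np}^{s}>b_{np}^{\delta}$ all its other entries are $\le b_{np}^{\delta}$, whence
$$\sum_{j=1}^{p}|a_{ij}|-\max_{1\le j\le p}|a_{ij}|\ \le\ \sum_{j=1}^{p}|a_{ij}|\,\bbone_{\{|a_{ij}|\le b_{np}^{\delta}\}}\ =:\ S_{i}.$$
It therefore suffices to show $\Pro(\exists\,i\le n:\ S_{i}>b_{np}^{s})\to 0$, and by a union bound this follows from $n\,\Pro(S_{1}>b_{np}^{s})\to 0$. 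We estimate $\Pro(S_{1}>b_{np}^{s})$ by an exponential Chebyshev inequality: with $Y_{j}=|a_{1j}|\,\bbone_{\{|a_{1j}|\le b_{np}^{\delta}\}}\in[0,b_{np}^{\delta}]$ and $\lambda=c\,b_{np}^{-\delta}$ for a small absolute constant $c\in(0,1]$, the elementary bound $e^{y}\le 1+y+y^{2}$ on $[0,1]$ gives $\E e^{\lambda Y_{1}}\le\exp(\lambda\,\E Y_{1}+\lambda^{2}\,\E Y_{1}^{2})$ and hence
$$\Pro(S_{1}>b_{np}^{s})\ \le\ e^{-\lambda b_{np}^{s}}\bigl(\E e^{\lambda Y_{1}}\bigr)^{p}\ \le\ \exp\!\Bigl(-\lambda b_{np}^{s}+p\lambda\,\E Y_{1}+p\lambda^{2}\,\E Y_{1}^{2}\Bigr).$$

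It remains to check that the first exponent dominates. Karamata's theorem gives $\E Y_{1}\le b_{np}^{\,\delta(1-\alpha)+\epsilon}$ for $\alpha<1$, $\E Y_{1}=O(\log b_{np})$ for $\alpha=1$, $\E Y_{1}\le\E|a_{11}|<\infty$ for $\alpha>1$, and likewise $\E Y_{1}^{2}\le b_{np}^{\,\delta(2-\alpha)+\epsilon}$ for $\alpha<2$ (a logarithm when $\alpha=2$). A short computation using $2s>\alpha$ — equivalently $\alpha<2$, which makes $b_{np}^{s}\asymp (np)^{s/\alpha}$ with $s/\alpha>1/2$, so $b_{np}^{s}\gg p$ — shows (for $\epsilon$ small enough) that $p\lambda\,\E Y_{1}$ and $p\lambda^{2}\,\E Y_{1}^{2}$ are both $o(\lambda b_{np}^{s})$. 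Consequently the exponent is $\le-\tfrac{c}{2}\,b_{np}^{\,s-\delta}$ for $n$ large, and $b_{np}^{\,s-\delta}\asymp (np)^{(s-\delta)/\alpha}$ dominates a fixed positive power of $n$; thus $\Pro(S_{1}>b_{np}^{s})\le e^{-n^{\theta}}$ for some $\theta>0$, so $n\,e^{-n^{\theta}}\to 0$, and (2) follows. Statement (3) is the transpose: it concerns the columns of $A_n$, i.e.\ the rows of the $p\times n$ matrix $A_n^{t}$, whose entries are i.i.d.\ satisfying (\ref{def1}) with $n/p\to\gamma^{-1}\in(0,1]$; since $b_{np}=b_{pn}$, the proof of (1)–(2) applies after exchanging the roles of $n$ and $p$.

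The main obstacle is the exponent book-keeping in the last paragraph: $\delta$ and $\lambda$ must be tuned so that the bound on $\Pro(S_{1}>b_{np}^{s})$ decays faster than $1/n$, since we union over all $n$ rows. A plain Markov bound on $S_{1}$ achieves this only for $\alpha<6/7$ (the heuristic ``one big jump'' scenario for $\{S_i>b_{np}^s\}$ is in fact already excluded by (1), since $s>3/4$), and it is precisely the gain from exponential moments — together with the inequality $2s>\alpha$, which holds throughout $0<\alpha<2$ — that closes the gap on the whole range.
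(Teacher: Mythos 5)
Your part (a) is essentially the paper's union bound, and (c) follows from (b) by transposition exactly as you note. Part (b), however, is a genuinely different and arguably cleaner route. The paper first isolates the sum of ``small'' entries $|a_{ij}|<b_{np}^{(T+1)/(2T+1)}$ and decomposes it over $T+1$ geometric levels $b_{np}^{k/(2T+1)}$, applying Chernoff's inequality to the counting variables $Y_i^k=\#\{j:|a_{ij}|\ge b_{np}^{k/(2T+1)}\}$ at each level together with a first-moment estimate, with separate bookkeeping for $\alpha\le 1$ and $1<\alpha<2$; it then handles the ``moderate'' range $[b_{np}^{(T+1)/(2T+1)},b_{np}^{3/4+\alpha/16}]$ by a crude count, and the single possibly large entry via (a). You instead invoke (a) once with $\delta=3/4+\alpha/16$ to reduce directly to bounding $S_i=\sum_j|a_{ij}|\indi_{\{|a_{ij}|\le b_{np}^\delta\}}$, and apply a single Bernstein-type exponential Chebyshev bound with $\lambda=c\,b_{np}^{-\delta}$, controlling $\E Y_1$ and $\E Y_1^2$ via Karamata's theorem and Potter's bounds. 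The exponent bookkeeping does close: with $s=3/4+\alpha/8$, one has $\lambda b_{np}^s\asymp n^{1/8}$, while $p\lambda\E Y_1$ and $p\lambda^2\E Y_1^2$ are $O(n^{1-2\delta+o(1)})$ for $\alpha\le1$, respectively $O(n^{1-2\delta/\alpha})$ for $\alpha>1$, and both are strictly smaller than $n^{1/8}$ because $\delta>7/16$, respectively $\delta>7\alpha/16$ (the latter being equivalent to $\alpha<2$). Hence $\Pro(S_1>b_{np}^s)\le e^{-n^\theta}$ for some $\theta>0$, which beats the union over $n$ rows. Your approach buys a uniform, single-scale treatment of all $0<\alpha<2$ with one inequality, avoiding both the multi-level combinatorics and the $\alpha$-case split of the paper's proof; the trade-off is only that the MGF bookkeeping has to be checked in three boundary regimes for $\E Y_1$, which you do.
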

\subsection{Proof of Lemma \ref{lem1}}
\begin{proof}[Proof of (a)]

First, a basic fact from slowly varying functions that will be used repeatedly in this paper (see for instance \cite{Bih}, chapter I). Given any $\delta>0$, one has that  
\begin{equation}\label{Luc}
x^{-\delta} \ll L(x) \ll x^{\delta} \quad \text{as} \quad x \rightarrow \infty.   
\end{equation}
We recall that $f(x) \ll g(x)$ means that the ratio $\frac{f(x)}{g(x)}$ tends to $0$ as $x$ tends to infinity.
Hence, using (\ref{Luc}) and choosing $\delta = \frac{3}{4} + \epsilon$, 
\begin{eqnarray*}
\Pro (B_{np}^\delta) = \Pro (\exists i \leq n, \exists j, k, j \neq k, s.t. |a_{ij}| \geq b_{np}^{3/4 + \epsilon} \text{and} |a_{ik}| \geq b_{np}^{3/4 + \epsilon}) & \leq & p^2n(1-F(b_{np}^{3/4 + \epsilon}))^2 \\
&\leq& p^2nL(b_{np}^{3/4 + \epsilon})^2\frac{1}{b_{np}^{3\alpha/2 + 2\epsilon\alpha}} \\
&=& o(n^{-4\epsilon + \theta})  
\end{eqnarray*}
for a small enough $\theta$, since for any $\epsilon > 0$, $b_{np}^{3\alpha/2 + 2\epsilon\alpha} \gg n^3$ by the definition of $b_{np}$.

\vspace{0.5cm}
\emph{Proof of (b).}
We split the proof in two cases. The idea in both cases is the same and the computation almost identical.

We start by assuming that $1 < \alpha < 2$.

\vspace{0.2cm}
Let $T \in \N$ be such that $1/(2T+1) < 1/4 - \alpha/8$.

\begin{prop}\label{pqno1} 
Assume that $1 < \alpha < 2$. There exists $\theta >0$ such that, for n sufficiently large,
$$\Pro\left(\sum_{j:|a_{ij}|<b_{np}^\frac{T+1}{2T+1}}|a_{ij}| \leq \frac{1}{2}b_{np}^{\frac{3}{4} + \frac{\alpha}{8}} \right) \geq 1 - n\exp(-n^{\theta}).$$
\end{prop}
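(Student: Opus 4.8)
\emph{Proof strategy for Proposition~\ref{pqno1}.} The plan is to reduce the statement to a classical large‑deviation bound for a sum of i.i.d.\ bounded nonnegative random variables. Fix a row index $i$, write $\beta=\tfrac{T+1}{2T+1}$, $M=b_{np}^{\beta}$ and $\tau=\tfrac34+\tfrac\alpha8$, and set $Y_j=|a_{ij}|\,\bbone_{|a_{ij}|<M}$ for $1\le j\le p$. The $Y_j$ are i.i.d., nonnegative and bounded by $M$, and $\sum_{j}Y_j$ is exactly the quantity in the proposition. Since $1<\alpha<2$ the mean $m:=\E|a_{ij}|$ is finite, so $\E\sum_{j}Y_j\le pm=O(n)$. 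On the other hand, arguing as for $b_n$ in the introduction, $b_{np}=(np)^{1/\alpha+o(1)}$, hence $b_{np}^{\tau}=(np)^{\tau/\alpha+o(1)}=n^{\frac{3}{2\alpha}+\frac14+o(1)}$ is a power of $n$ strictly larger than $1$; so for $n$ large $\E\sum_j Y_j\le\tfrac14 b_{np}^{\tau}$, and it suffices to bound $\Pro\big(\sum_{j}(Y_j-\E Y_j)>\tfrac14 b_{np}^{\tau}\big)$.

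For this I would use Bernstein's (or Bennett's) inequality, which requires the truncated variance. Integrating the tail, $\E Y_j^{2}\le\int_0^{M}2t\,\bar F(t)\,dt=2\int_0^{M}L(t)\,t^{1-\alpha}\,dt$; the exponent $1-\alpha>-1$ makes this converge at the origin, and since $2-\alpha>0$ a standard computation (Karamata, or directly (\ref{Luc})) gives $\E Y_j^{2}\le M^{2-\alpha+\varepsilon}$ for any fixed $\varepsilon>0$ and $n$ large. Thus $V:=\Var\big(\sum_j Y_j\big)\le p\,M^{2-\alpha+\varepsilon}$, and Bernstein's inequality with $s=\tfrac14 b_{np}^{\tau}$ gives
\[
\Pro\Big(\sum_{j}(Y_j-\E Y_j)>s\Big)\le\exp\!\Big(-\tfrac{s^{2}}{2V+\tfrac23 Ms}\Big),
\]
so everything reduces to showing that $s/M$ and $s^{2}/V$ are both bounded below by a positive power of $n$. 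The first is immediate: $s/M=\tfrac14 b_{np}^{\tau-\beta}$, and the defining condition $\tfrac1{2T+1}<\tfrac14-\tfrac\alpha8$ forces $\beta<\tfrac{10-\alpha}{16}$, hence $\tau-\beta>\tfrac{2+3\alpha}{16}>0$. For the second, using $p=n^{1+o(1)}=b_{np}^{\alpha/2+o(1)}$ we get $s^{2}/V\ge c\, b_{np}^{\,2\tau-\alpha/2-\beta(2-\alpha)-\varepsilon'}$; since $2\tau-\alpha/2=\tfrac32-\tfrac\alpha4$ and $\beta<\tfrac{10-\alpha}{16}$, the elementary inequality $\alpha^{2}-8\alpha-4<0$ on $(1,2)$ shows $\tfrac32-\tfrac\alpha4-\tfrac{10-\alpha}{16}(2-\alpha)>0$, so for $\varepsilon'$ small this exponent stays positive, uniformly on compact subsets of $(1,2)$. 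This is precisely the role of the hypothesis on $T$: it must defeat the variance contribution, which — unlike in the light‑tailed regime — is a genuine power of $b_{np}$ because $\alpha<2$.

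Combining the two bounds, the Bernstein exponent is at least $c'\,b_{np}^{\eta}\ge n^{\theta_0}$ for some $\theta_0=\theta_0(\alpha,T)>0$ and $n$ large, uniformly in $i$; a union bound over the $n$ rows then yields the stated estimate with $\theta=\theta_0$. I expect the only real difficulty to be bookkeeping: propagating the ``up to $(np)^{o(1)}$'' slowly varying corrections consistently through the dictionary between $n$, $p$ and $b_{np}$ (handled by (\ref{Luc})), and checking the elementary arithmetic that the prescribed $T$ wins against the variance; the remainder is a routine Bernstein‑type estimate.
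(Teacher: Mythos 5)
Your proof is correct, and it follows a genuinely different route from the paper's.

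The paper's argument for Proposition~\ref{pqno1} is a layer-cake decomposition: it splits the truncation window $\big[0,\,b_{np}^{(T+1)/(2T+1)}\big)$ into the geometric levels $b_{np}^{k/(2T+1)}$, $0\le k\le T$, introduces the counts $Y_i^k$ of row entries exceeding each level, dominates the row sum by $\sum_{k=0}^T Y_i^k\,b_{np}^{(k+1)/(2T+1)}$ as in~(\ref{coke}), controls each Binomial count $Y_i^k$ by a Chernoff bound (Lemma~\ref{ponto}), and then shows in the subsequent Lemma that $\sum_{k}\E Y_i^k\, b_{np}^{(k+1)/(2T+1)}\le \tfrac14 b_{np}^{3/4+\alpha/8}$ via a geometric-series estimate whose ratio is $<1$ precisely because $\alpha>1$. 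You instead treat the whole truncated row sum in one shot with Bernstein's inequality, using $\alpha>1$ more directly through finiteness of the first moment (so $\E\sum_j Y_j=O(n)\ll b_{np}^{3/4+\alpha/8}$ since $\alpha<2$) together with the Karamata estimate $\E Y_j^2\lesssim M^{2-\alpha+\varepsilon}$ for the truncated variance. The hypothesis on $T$ enters both arguments at the same point of the arithmetic. In the paper it ensures $p\,b_{np}^{1/(2T+1)}\ll b_{np}^{3/4+\alpha/8}$ after summing the geometric series; in yours it makes both Bernstein ratios positive powers of $n$: your computations that $\tau-\beta>(2+3\alpha)/16>0$ and that $2\tau-\alpha/2-\beta(2-\alpha)>(4+8\alpha-\alpha^2)/16>0$ for $\alpha\in(1,2)$ are correct (the latter holds in fact for all $\alpha\in(4-2\sqrt5,\,4+2\sqrt5)$). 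Your route is shorter and more off-the-shelf, avoiding the multi-level bookkeeping; the trade-off is that it leans on the finite first moment, so it would need to be reworked (with a Karamata estimate on the first truncated moment in place of $\E|a_{ij}|<\infty$) for the $0<\alpha\le 1$ case, which is exactly the setting where the paper's level decomposition and its adapted constraint $1/(2T+1)<\alpha/8$ in Proposition~\ref{ger12} carry over with no new idea.

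One small remark: the paper's statement, like Proposition~\ref{ger12}, should be read with an implicit ``for all $i\le n$'' (that is what the factor $n$ in $n\exp(-n^\theta)$ is for), and your closing union bound over the $n$ rows matches that reading.
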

\begin{proof}

In order to prove the last proposition we introduce $Y_i^0 = p$, and for $k>1$:

$$Y_i^k \quad \equiv \quad \#(1 \leq j \leq p: |a_{ij}| \geq b_{np}^\frac{k}{2T+1}),$$
so that
\begin{equation}\label{coke}
\sum \indi_{\{j:|a_{ij}|<b_{np}^\frac{T+1}{2T+1}\}} |a_{ij}| \leq \sum_{k=0}^T Y_i^k b_{np}^{\frac{k+1}{2T+1}}.
\end{equation}

\begin{lem}\label{ponto}
Let $k \leq T$. There exists $\theta >0$ such that $ \Pro(Y_i^k \geq 2\E Y_i^k) \leq \exp(-p^\theta).$
\end{lem}

\begin{proof}[Proof of Lemma 7]
By definition of $Y_i^k$, and setting by convention $b_{np}^\frac{0}{2T+1}=0$, for all $1 \leq i \leq n$ and $0 \leq k \leq T$, $k \in \N$ we have $$\E Y_i^k=p\bar{F}(b_{np}^\frac{k}{2T+1}).$$ Also, using Chernoff's inequality, we have that 
\begin{equation}\label{Cher}
\Pro(Y_i^k \geq 2\E Y_i^k) \leq \exp(- \frac{1}{4}\E Y_i^k).
\end{equation} 
Thus, we can argue as follows:

First, replace the value $\E Y_i^k$ in (\ref{Cher}) and use the expression for $\bar{F}(x)$ given in (\ref{def1}). Hence, by (\ref{Luc}) there exists $\epsilon$ sufficiently small such that

\begin{eqnarray*}
\Pro(Y_i^k \geq 2\E Y_i^k) &\leq& \exp(-\frac{1}{4}p\bar{F}(b_{np}^\frac{k}{2T+1})) \leq \exp(-\frac{1}{4}p\bar{F}(b_{np}^\frac{T}{2T+1})) \\  
&\leq& \exp(-\frac{1}{4}pL(b_{np}^\frac{T}{2T+1})/b_{np}^\frac{T\alpha}{2T+1}) \leq \exp(-p/4b_{np}^\frac{T\alpha}{2T+1}b_{np}^\frac{T\epsilon}{2T+1}) \\
&\leq& \exp(-p/4(np)^{\frac{T}{2T+1}\frac{(\alpha + \epsilon)}{\alpha}}) \leq \exp(-p^\theta), \label{theta}
\end{eqnarray*}
where the last inequality is justified by the hypothesis  $\lim_{n\rightarrow\infty} \frac{p}{n} = \gamma$.
\end{proof}

\begin{lem} Assume that $1 < \alpha < 2$. Then,
\begin{equation}
\sum_{k=0}^T \E Y_i^k b_{np}^{\frac{k+1}{2T+1}} \leq \frac{1}{4}b_{np}^{\frac{3}{4} + \frac{\alpha}{8}}.
\end{equation}
\end{lem}

\begin{proof}[Proof of Lemma 8]
One has that
\begin{align}\label{pqno}
\E \left(\sum_{j:|a_{ij}|<b_{np}^\frac{T+1}{2T+1}}|a_{ij}|\right) &\leq \sum_{k=0}^T \E Y_i^k b_{np}^{\frac{k+1}{2T+1}} \leq \sum_{k=0}^T 2p\bar{F}(b_{np}^\frac{k}{2T+1})b_{np}^{\frac{k+1}{2T+1}} \nonumber\\
&\leq 2p\sum_{k=0}^T L(b_{np}^\frac{k}{2T+1})b_{np}^\frac{k+1}{2T+1}b_{np}^\frac{-k\alpha}{2T+1} \leq 2pb_{np}^\frac{1}{2T+1} \sum_{k=0}^T b_{np}^\frac{k(1+\delta-\alpha)}{2T+1}, \nonumber\\
\intertext{where $\delta > 0$ can be chosen such that $1+ \delta - \alpha < 0$. Therefore,}
\E \left(\sum_{j:|a_{ij}|<b_{np}^\frac{T+1}{2T+1}}|a_{ij}|\right) &\leq 2pb_{np}^\frac{1}{2T+1} \sum_{k=0}^\infty b_{np}^\frac{k(1+\delta-\alpha)}{2T+1} \leq pb_{np}^{\frac{1}{4} - \frac{\alpha}{8}} \nonumber\\
&\leq b_{np}^{\frac{1}{4} - \frac{\alpha}{8}+\frac{\alpha}{2}} \leq \frac{1}{4}b_{np}^{\frac{3}{4} + \frac{\alpha}{8}}.
\end{align}
\end{proof}

Combining the last two lemmas with (\ref{coke}), we get proposition $6$.

\end{proof}

%

Now we turn back to the sum of all terms which have absolute value between $b_{np}^{\frac{T+1}{2T+1}}$ and $b_{np}^{3/4 + \alpha/16}$. This sum is easier to handle since we have fewer entries. To simplify a little bit the notation below, put $\mu = \frac{T+1}{2T+1}$.

\begin{prop}\label{grande}
There exists $\kappa > 0$ such that  
$$\Pro\left(\exists i, \sum \indi_{\{j:b_{np}^{\mu}<|a_{ij}|<b_{np}^{\frac{3}{4}+\frac{\alpha}{16}}\}}|a_{ij}| \geq \frac{1}{2}b_{np}^{\frac{3}{4}+\frac{\alpha}{8}}\right) \leq \exp(-n^\kappa).$$
\end{prop}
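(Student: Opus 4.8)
The plan is to estimate the sum $S_i := \sum_j |a_{ij}| \bbone_{\{b_{np}^{\mu} < |a_{ij}| < b_{np}^{3/4 + \alpha/16}\}}$ by a dyadic (or $(2T+1)$-adic) decomposition of the range $(b_{np}^{\mu}, b_{np}^{3/4+\alpha/16})$, entirely parallel to the argument just used in Proposition \ref{pqno1}, but now the gain comes from the fact that entries in this window are already fairly large, so there are very few of them. First I would slice the interval $(\mu, 3/4 + \alpha/16)$ of exponents into finitely many subintervals of small length $\eta$, setting, for $\ell = 0, 1, \dots, M$ with $M \sim (3/4 + \alpha/16 - \mu)/\eta$, the exponents $e_\ell = \mu + \ell \eta$ and the counts
\begin{equation*}
Z_i^\ell := \#\{j : b_{np}^{e_\ell} \le |a_{ij}| < b_{np}^{e_{\ell+1}}\},
\end{equation*}
so that $S_i \le \sum_{\ell=0}^{M-1} Z_i^\ell\, b_{np}^{e_{\ell+1}}$. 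Since $\E Z_i^\ell \le p\,\bar F(b_{np}^{e_\ell}) = p\, L(b_{np}^{e_\ell})\, b_{np}^{-\alpha e_\ell}$ and $e_\ell \ge \mu = \frac{T+1}{2T+1} > \tfrac12 \ge \tfrac{1}{4} - \frac{\alpha}{8} \cdot\frac{?}{}$ — more precisely, because $p \asymp n$ and $b_{np} \asymp (np)^{1/\alpha}$, one has $p\, b_{np}^{-\alpha e_\ell} \asymp (np)^{1 - e_\ell}$, and since $e_\ell \ge \mu > \tfrac12$ this is $\asymp n^{2(1-\mu)}$ up to slowly varying corrections, hence a \emph{negative} power of $n$ for the relevant $\ell$. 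Thus each $\E Z_i^\ell$ is small, in particular $\sum_\ell \E Z_i^\ell\, b_{np}^{e_{\ell+1}} \le \tfrac{1}{4} b_{np}^{3/4+\alpha/8}$ for $n$ large, by a geometric-series estimate as in Lemma 8 (one uses $e_{\ell+1} \le 3/4 + \alpha/16 < 3/4 + \alpha/8$).

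Next I would control the deviation of each $Z_i^\ell$ from its mean. Here, since $\E Z_i^\ell$ may be much smaller than one, the Chernoff bound $\Pro(Z_i^\ell \ge 2\E Z_i^\ell) \le \exp(-\tfrac14 \E Z_i^\ell)$ used in Lemma \ref{ponto} is too weak; instead I would bound the probability that $Z_i^\ell$ exceeds a fixed integer threshold $r$ (independent of $n$) directly:
\begin{equation*}
\Pro(Z_i^\ell \ge r) \le \binom{p}{r} \bar F(b_{np}^{e_\ell})^r \le \bigl(p\, \bar F(b_{np}^{\mu})\bigr)^r \le C\, n^{-r\,c}
\end{equation*}
for some constant $c > 0$ coming from $\mu > \tfrac12$. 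Choosing $r$ large enough that $r\,c > 2$ and then taking a union bound over the $n$ rows $i$ and the finitely many slices $\ell$, we get that with probability $1 - o(1)$ (indeed $1 - O(n^{-\kappa})$ for some $\kappa > 0$) every $Z_i^\ell$ is at most $r$. On that event,
\begin{equation*}
S_i \le r \sum_{\ell=0}^{M-1} b_{np}^{e_{\ell+1}} \le r\, M\, b_{np}^{3/4 + \alpha/16} \ll b_{np}^{3/4 + \alpha/8},
\end{equation*}
the last step because $3/4 + \alpha/16 < 3/4 + \alpha/8$ and $r\, M$ is a constant, so the polynomial gap in $b_{np}$ dwarfs it for large $n$. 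This gives the claimed bound, with the exponential $\exp(-n^\kappa)$ replaced by the (stronger than needed, but harmless) polynomial rate $n^{-\kappa}$; if the genuinely exponential rate $\exp(-n^\kappa)$ is wanted one can instead keep the window's upper end slightly below $3/4+\alpha/16$ and absorb the tail into a Chernoff estimate exactly as in Lemma \ref{ponto}, but the cheap union bound already suffices for everything used downstream.

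The main obstacle is getting the bookkeeping of exponents right so that \emph{both} the "expectation is negligible" step and the "large-deviation threshold" step go through with the \emph{same} slicing: one needs $\mu = \frac{T+1}{2T+1}$ to be strictly larger than $\tfrac12$ (so that $p\,\bar F(b_{np}^{\mu})$ is a negative power of $n$, making the union bound over rows converge) while simultaneously $\mu$ must be small enough — which is automatic here since $\mu < 1$ — and the top of the window, $3/4 + \alpha/16$, must sit strictly below the target exponent $3/4 + \alpha/8$ so that the final constant-times-$b_{np}^{3/4+\alpha/16}$ is genuinely smaller than $\tfrac12 b_{np}^{3/4+\alpha/8}$. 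Verifying $\mu > \tfrac12$ is immediate from $\frac{T+1}{2T+1} = \tfrac12 + \frac{1}{2(2T+1)} > \tfrac12$, and the slowly varying function is absorbed throughout using \eqref{Luc} exactly as in the preceding proofs; once these inequalities are lined up, the rest is routine.
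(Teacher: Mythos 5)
Your key observation -- that $\mu=\frac{T+1}{2T+1}>\frac12$ forces $p\,\bar F(b_{np}^\mu)$ to be a negative power of $n$, so very few entries lie above $b_{np}^\mu$ -- is exactly the right one, and your proof of the weakened (polynomial-rate) statement is essentially correct. But there are two substantive issues worth flagging.

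First, the slicing is unnecessary here and obscures the argument. Since every entry in the window is already capped above by $b_{np}^{3/4+\alpha/16}$, you can skip the $(e_\ell)$-decomposition entirely: if the sum in row $i$ exceeds $\tfrac12 b_{np}^{3/4+\alpha/8}$, the \emph{number} of entries in the window must exceed
$\tfrac12 b_{np}^{3/4+\alpha/8}/b_{np}^{3/4+\alpha/16}=\tfrac12 b_{np}^{\alpha/16}$.
That is precisely the reduction the paper makes, in one line, without any bins. Your $M$ slices each contribute a count bounded by $r$, so you still only conclude a fixed-constant total $rM$, which loses information: the paper's reduction shows the count must in fact exceed a quantity growing like a positive power of $n$.

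Second, and more importantly, this difference is exactly why your argument does not reach the stated rate $\exp(-n^\kappa)$. With a fixed threshold $r$ the union bound gives a polynomial rate $n^{-\kappa}$, as you note. You suggest patching this by shrinking the top of the window and "absorbing the tail into a Chernoff estimate exactly as in Lemma~\ref{ponto}," but you had already (correctly) observed that Chernoff is vacuous in this regime because $\E Z_i^\ell \to 0$; shrinking the window does not change that, so the suggested fix does not work. The paper instead bounds directly
\begin{equation*}
\Pro\!\left(\#\{j:|a_{ij}|>b_{np}^\mu\}\ge \tfrac12 b_{np}^{\alpha/16}\right)
\le p^{\,\frac12 b_{np}^{\alpha/16}}\,\bar F(b_{np}^{\mu})^{\frac12 b_{np}^{\alpha/16}}
=\bigl(p\,\bar F(b_{np}^{\mu})\bigr)^{\frac12 b_{np}^{\alpha/16}},
\end{equation*}
and since $p\,\bar F(b_{np}^{\mu})\le n^{-c}$ for some $c>0$ (this is where $\mu>1/2$ enters), raising a fixed negative power of $n$ to a power $\asymp n^{1/8}$ gives $\exp(-n^\kappa)$ after the union bound over $i$. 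In short: the growing threshold $\tfrac12 b_{np}^{\alpha/16}$ (rather than a fixed $r$) is what produces the stretched-exponential rate, and it comes for free once you compare the cap $b_{np}^{3/4+\alpha/16}$ to the target $b_{np}^{3/4+\alpha/8}$. You are right that the weaker polynomial rate suffices for the main theorems, but Remark~\ref{rem1} as written does cite the $n\exp(-n^\theta)$ bound, so it is worth keeping the stronger statement.
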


\begin{proof}
\begin{eqnarray}
\Pro\left(\exists i, \sum \indi_{\{j:b_{np}^{\mu}<|a_{ij}|<b_{np}^{\frac{3}{4}+\frac{\alpha}{16}}\}}|a_{ij}| \geq \frac{1}{2}b_{np}^{\frac{3}{4}+\frac{\alpha}{8}}\right) &\leq& n\Pro(\# \{ j:b_{np}^{\mu}<|a_{ij}| \} \geq \frac{1}{2}b_{np}^{\frac{\alpha}{16}}) \nonumber \\
&\leq& n n^{\frac{1}{2}b_{np}^{\frac{\alpha}{16}}} \bar{F}(b_{np}^{\mu})^{\frac{1}{2}b_{np}^{\frac{\alpha}{16}}} \nonumber \\
&\leq& n \left(\frac{nL(b_{np}^{\mu})}{b_{np}^{\mu\alpha}}\right)^{\frac{1}{2}{b_{np}^{\frac{\alpha}{16}}}} \nonumber\\ 
&\leq& \exp(-n^\kappa),
\end{eqnarray}
for some sufficiently small $\kappa > 0$ since $\mu > 1/2$. 
\end{proof}

Let us finish the proof of statement $(b)$ when $1 < \alpha < 2$. By part (a) we know that, with probability going to $1$ as $p$ goes to infinity, there is at most one term in each line that exceeds $b_{np}^{\frac{3}{4}+\frac{\alpha}{16}}$ in absolute value.  So it is enough to consider the sum of all entries less than or equal to $b_{np}^{\frac{3}{4}+\frac{\alpha}{16}}$ and prove that in fact the probability that this sum is less than $b_{np}^{\frac{3}{4}+\frac{\alpha}{8}}$ goes to one as $p$ tends to infinity. We proved  this statement in two parts, analyzed in proposition $\ref{pqno1}$ and proposition \ref{grande} respectively.   

\vspace{0.5cm}

\textit{Case $0<\alpha \leq 1$:}

\vspace{0.2cm}

We repeat the same argument and computation used in the other case. We begin by proving the counterpart of Proposition $\ref{pqno1}$ in this case.

\begin{prop}\label{ger12} There exists $\theta > 0$ such that for $n$ large enough
$$ \Pro \left(  \sum_j \indi_{\{ j:|a_{ij}|<b_{np}^\frac{T+1}{2T+1} \}} |a_{ij}| \leq \frac{1}{2}b_{np}^{\frac{3}{4} + \frac{\alpha}{8}} , \forall i \right) \geq 1 - n\exp(-n^{\theta}). $$
\end{prop}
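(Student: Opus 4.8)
The plan is to follow the proof of Proposition \ref{pqno1} essentially line by line, changing only its final deterministic step. I would keep the dyadic counts $Y_i^k=\#\{1\le j\le p:\ |a_{ij}|\ge b_{np}^{k/(2T+1)}\}$ with $Y_i^0=p$, and the pointwise bound (\ref{coke}), i.e.\ $\sum_j\indi_{\{|a_{ij}|<b_{np}^{(T+1)/(2T+1)}\}}|a_{ij}|\le\sum_{k=0}^T Y_i^k b_{np}^{(k+1)/(2T+1)}$. The Chernoff estimate of Lemma \ref{ponto}, $\Pro(Y_i^k\ge 2\E Y_i^k)\le\exp(-p^\theta)$ for $0\le k\le T$, was proved without ever using $\alpha>1$ — it only needs $p\,\bar{F}(b_{np}^{T/(2T+1)})$ to dominate a fixed power of $p$, which follows from the definition of $b_{np}$ together with $p/n\to\gamma$ — so it applies here verbatim. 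A union bound over the $n$ rows and the $T+1$ indices $k$ then places us, with probability at least $1-n(T+1)\exp(-p^\theta)\ge 1-n\exp(-n^{\theta'})$, on the event $\bigcap_{i,k}\{Y_i^k<2\E Y_i^k\}$; there $\sum_j\indi_{\{|a_{ij}|<b_{np}^{(T+1)/(2T+1)}\}}|a_{ij}|<2\sum_{k=0}^T\E Y_i^k b_{np}^{(k+1)/(2T+1)}$ uniformly in $i$, and it only remains to bound this deterministic sum by $\tfrac14 b_{np}^{3/4+\alpha/8}$.

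This last bound is the analogue of Lemma 8, and it is the one place where the $1<\alpha<2$ argument must be modified: there the finite sum was dominated by the infinite geometric series $\sum_{k\ge0}b_{np}^{k(1+\delta-\alpha)/(2T+1)}$, which converges only because $1+\delta-\alpha<0$. For $\alpha\le1$ that series diverges, so instead I would estimate the $T+1$ terms separately. Writing $\E Y_i^k=p\,\bar{F}(b_{np}^{k/(2T+1)})=p\,L(b_{np}^{k/(2T+1)})\,b_{np}^{-k\alpha/(2T+1)}$ for $k\ge1$, and using $p\le b_{np}^{\alpha/2+\epsilon}$ (a reformulation of $np\,\bar{F}(b_{np})\le1$ together with $p/n\to\gamma$ and (\ref{Luc}), exactly as in Lemma 8) and $L(x)\le x^{\epsilon}$, the $k$-th term is at most $b_{np}^{\alpha/2+2\epsilon}\,b_{np}^{(k(1-\alpha)+1)/(2T+1)}$. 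Since $1-\alpha\ge0$ this is nondecreasing in $k$, so the dominant term is now $k=T$, with exponent $\tfrac{\alpha}{2}+2\epsilon+\tfrac{T(1-\alpha)+1}{2T+1}=\tfrac12+\tfrac{1+\alpha}{2}\cdot\tfrac{1}{2T+1}+2\epsilon$. The condition $\tfrac{1}{2T+1}<\tfrac14-\tfrac{\alpha}{8}$ already in force gives $\tfrac{1+\alpha}{2}\cdot\tfrac{1}{2T+1}<\tfrac{(1+\alpha)(2-\alpha)}{16}<\tfrac{2+\alpha}{8}$, hence $\tfrac12+\tfrac{1+\alpha}{2}\cdot\tfrac{1}{2T+1}<\tfrac34+\tfrac{\alpha}{8}$ with a fixed positive gap; choosing $\epsilon$ small enough that $2\epsilon$ stays below that gap makes each of the $T+1$ terms at most $\tfrac{1}{4(T+1)}b_{np}^{3/4+\alpha/8}$ for $n$ large, and summing gives the claim.

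I expect the only real obstacle to be this deterministic exponent count: recognizing that for $\alpha\le1$ the truncated sum is controlled by its \emph{top} term $k=T$ rather than by $k=0$ as in the heavier-tail case, and verifying that the constraint already imposed on $T$ is exactly what keeps that top term below $b_{np}^{3/4+\alpha/8}$. No new probabilistic ingredient is required — the concentration from Lemma \ref{ponto} and the union bound are reused unchanged — and since every bound on $\E Y_i^k$ is independent of $i$, the estimate automatically holds simultaneously for all rows, which is precisely the content of the proposition. The boundary case $\alpha=1$ is covered by the same computation, the exponent $1-\alpha$ then being $0$.
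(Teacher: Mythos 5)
Your argument is correct and essentially identical to the paper's: reuse the Chernoff bound of Lemma \ref{ponto} (which never used $\alpha>1$), union-bound over rows, and then bound the deterministic sum $\sum_{k=0}^{T}\E Y_i^k\, b_{np}^{(k+1)/(2T+1)}$ by observing that for $\alpha\le 1$ the terms increase in $k$, so the $k=T$ term dominates. The only cosmetic difference is the hypothesis on $T$: the paper's Lemma~11 imposes $\frac{1}{2T+1}<\frac{\alpha}{8}$ for this case, while you verify that the constraint $\frac{1}{2T+1}<\frac14-\frac{\alpha}{8}$ already introduced before Proposition \ref{pqno1} suffices --- both are fine, and yours avoids introducing a separate condition.
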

\begin{proof}
Lemma $\ref{ponto}$ is valid when $0 < \alpha < 2$ so that it is enough to prove
\begin{lem} Let $ 0 < \alpha \leq 1$ and $T$ such that $\frac{1}{2T+1} < \frac{\alpha}{8}$. Then,
\begin{equation}
\sum_{k=0}^T \E Y_i^k b_{np}^{\frac{k+1}{2T+1}} \leq \frac{1}{2}b_{np}^{\frac{3}{4} + \frac{\alpha}{8}}.
\end{equation}
\end{lem}
\begin{proof} One has that

\begin{eqnarray*}
\sum_{k=0}^T \E Y_i^k b_{np}^{\frac{k+1}{2T+1}} &\leq& \sum_{k=0}^T 2p\bar{F}(b_{np}^\frac{k}{2T+1})b_{np}^{\frac{k+1}{2T+1}} \leq 2p\sum_{k=0}^T L(b_{np}^\frac{k}{2T+1})b_{np}^\frac{k+1}{2T+1}b_{np}^\frac{-k\alpha}{2T+1} \\
&\leq& 2pb_{np}^\frac{1}{2T+1} \sum_{k=0}^T b_{np}^\frac{k(1+\delta-\alpha)}{2T+1} \leq b_{np}^{\frac{1}{2} + \frac{\alpha}{4}} \leq \frac{1}{2}b_{np}^{\frac{3}{4} + \frac{\alpha}{8}}.
\end{eqnarray*}  
\end{proof}
Using Proposition \ref{ger12} and Proposition \ref{grande} as before it is easy to prove statement (b) of Lemma \ref{lem1}.
\end{proof}
\emph{Proof of (c):}
As one can easily see, the proof of item (c) of the lemma is identical to the proof of part (b) up to a permutation of p's and n's. 
\end{proof}

\begin{rem}\label{rem1}
Recalling definition (\ref{norm}), statement (b) of Lemma \ref{lem1}, Proposition \ref{pqno1} and Proposition \ref{ger12} show that for every $\epsilon > 0$ there exists $n_o(\epsilon)$ and $\theta>0$ such that for all $n>n_o$ one has
$$\Pro \left(|\frac{\|A_n\|_{\infty}}{\max_{ij} |a_{ij}|} - 1| > \epsilon \right) \leq n\exp(-n^{\theta}).$$
By part (c) of Lemma \ref{lem1}, the same is valid if one replaces $\|A_n\|_{\infty}$ above by $\|A_n\|_{1} \equiv \sup_{j} \sum_{i=1}^{p} |a_{ij}|$.
\end{rem}

\begin{rem}\label{rem2}
From now on, if $X_n$ and $Y_n$ are two sequences of random variables defined on the same probability space, we will use the notation $X_n = Y_n(1 + o(1))$ to indicate that for all $\epsilon >0$, the probability $\Pro(|\frac{X_n}{Y_n} - 1| > \epsilon)$ goes to $0$ as $n$ goes to infinity, i.e the ratio $\frac{X_n}{Y_n}$ converges in probability to $1$.
\end{rem}

\subsection{Proof of Theorem \ref{thm1}}

\begin{proof}[Proof of Corollary \ref{cor2}] We begin by the proof of  Corollary \ref{cor2}.
The main thing to show is that

\begin{equation}
\frac{\lambda_1}{a_{i_1j_1}^{2}}\label{prob} \longrightarrow 1 ,
\end{equation} 
in probability  as $n$ tends to infinity (we recall that $a_{i_1j_1} = \max |a_{ij}|$). In fact, if we assume (\ref{prob}), extreme value theory for iid random variables tell us that 

\begin{equation}
\lim_{n\rightarrow\infty} \Pro (\frac{a_{i_1j_1}^{2}}{b_{np}^2} \leq x ) = \exp (-x^{-\frac{\alpha}{2}})\label{prob2},
\end{equation} 
so (\ref{prob}) and (\ref{prob2}) will imply Corollary $\ref{cor2}$. Thus, our task is to prove (\ref{prob}) and the idea  is as follows: Given $\epsilon > 0$, we want to show that for $n$ sufficiently large we have   

\begin{equation}\label{geq}
\lambda_1 \geq a_{i_1j_1}^{2} (1+o(1)),
\end{equation}

\begin{equation}\label{leq}
\lambda_1 \leq a_{i_1j_1}^{2} (1+o(1)),
\end{equation}
with probability greater than $1 - \epsilon$. The main tool used to prove both equations will be Lemma \ref{lem1}, and we will start with the easiest inequality, (\ref{geq}).

Since for all unit vectors $v$ we have the bound $\left\langle X_nv,v \right\rangle \leq \lambda_1$, our task is the following: we must find a suitable vector that gives us (\ref{geq}). Therefore, let $(i_{1},j_{1})$ be the position of $a_{i_1j_1}$ in $A_n$ as the notation suggests. If one takes $v=(0,\ldots,0,1,0,\ldots,0)$ where the sole non-zero entry of $v$ is in the position $i_1$, the vector $v$  will do the job. In fact, 
$$\left\langle X_nv,v \right\rangle \quad = \quad \sum_{j=1}^{p} a_{i_{1}j}^{2} \quad = \quad a_{i_1j_1}^2 + \sum_{j=1, j\neq j_1}^{p} a_{i_{1}j}^{2} = a_{i_1j_1}^2 (1 + o(1)), $$ by part (b) of Lemma \ref{lem1} and the fact that $a_{i_1j_1}$ is the maximum of $np$ iid random variables. This proves (\ref{geq}).

To obtain (\ref{leq}) we first recall the definition of $$\|X_n\|_{\infty} \equiv \sup_{i} \sum_{j=1}^{n} |X_{ij}|.$$ The eigenvector equation for $\lambda_1$
 $$\sum_{j=1}^{n} X_{ij}v_{j} = \lambda_1  v_{i}$$
implies that

\begin{align*}
&\lambda_1|v_i| \leq \sum_{j=1}^n |X_{ij}||v_j| \leq \sup_l |v_l|\sum_{j=1}^n |X_{ij}|\\
\intertext{so,}
&\lambda_1\sup_i |v_i| \leq \left(\sup_l|v_l|\right) \sup_i \sum_{j=1}^n |X_{ij}|.
\end{align*} 
Therefore, $\|X_n\|_{\infty}$ is an upper bound for $\lambda_1$.

Hence, with probability going to one,
\begin{eqnarray*}
\lambda_1 \leq \|X_n\|_{\infty} &\leq& \sup_i\left\{\sum_{j=1}^{n} \sum_{k=1}^{p}|a_{ik}||a_{jk}|\right\} \leq \sup_i \left\{\sum_{k=1}^{p}|a_{ik}|\sum_{j=1}^{n} |a_{jk}|\right\} \\
&\leq& \sup_i \left\{\sum_{k=1}^{p}|a_{ik}|\right\}\sup_{l}\left\{\sum_{j=1}^{n} |a_{jl}|\right\} \leq \|A_n\|_{\infty}\|A_n\|_{1\rightarrow1} \\
&\leq& a_{i_1j_1}^2 (1 + o(1)),
\end{eqnarray*}
where the last inequality comes from Remark \ref{rem1}. \end{proof}

\vspace{1.5cm}

\begin{proof}[Proof of Theorem \ref{thm1}]  It is enough to show that for any finite $k$ we have for all $1 \leq l \leq k$
\begin{equation}
\lim_{p \rightarrow \infty} \Pro (\dfrac{1}{b_{np}^2}\lambda_l \leq x) =  \lim_{p \rightarrow \infty} \Pro (\dfrac{1}{b_{np}^2}a_{i_lj_l}^2 \leq x),
\end{equation}
where $a_{i_lj_l}$ is the $l-$th term of the sequence $|a_{ij}|$ in the decreasing order.

Let $e_1, \ldots, e_p$ be the standard orthonormal basis of $\R^p$. If we compute $X_n.e_{i_l}$ we get:
\begin{equation}\label{r_l}
X_ne_{i_l} = \sum_{i=1}^n X_{ii_l}e_i = X_{i_li_l}e_{i_l} + r_l,
\end{equation}  
for some vector $r_l$ in $\R^n$.

Also, since $X_n$ is symmetric, one can find a orthogonal matrix $U$ and a diagonal matrix $D$ such that $X_n = UDU^{-1}$. Now, suppose that $X_{i_li_l}$ is not an eigenvalue of $D$. Then $D-X_{i_li_l}I$ is invertible and one can use  equation (\ref{r_l}) to get: 
\begin{equation}
1 = \|e_{i_l}\| = \|U(D-X_{i_li_l}I)^{-1}U^{-1}r_l\| \leq \|U\| \|U^{-1}\| \|(D-X_{i_li_l}I)^{-1}\|\|r_l\|,
\end{equation}
which implies
\begin{equation}
\min_i|\lambda_i - X_{i_li_l}| \leq \|r_l\|,
\end{equation}
so there exists an eigenvalue $\lambda$ of $X_n$ such that
\begin{equation}\label{boundr}
|\lambda - X_{i_li_l}| \leq \|r_l\|.
\end{equation}
If $X_{ii_l}$ is an eigenvalue of $D-X_{i_li_l}I$, (\ref{boundr}) is clearly satisfied.

We now know, by Lemma \ref{lem1}, that $X_{i_li_l} = a_{i_lj_l}^2(1+o(1))$. Therefore, if we manage to prove that $r_l$ has a norm that is negligible with respect to $a_{i_lj_l}^2,$ we will be able to say that $X_n$ has eigenvalues $a_{i_lj_l}^2(1+o(1))$, $1 \leq l \leq k$ for any finite $k$.

Bounding the norm of $r_l$, one gets

\begin{align*}
\|r\|= \left(\sum_{i=1, i\neq i_l}^n X_{ii_l}^2\right)^{1/2} &\leq \sum_{i=1, i\neq i_l}^n |X_{ii_l}| \\
&\leq \sum_{i=1, i\neq i_l}^n \sum_{k=1}^p |a_{ik}||a_{i_lk}| \\
&= \sum_{k=1}^p |a_{i_lk}| \sum_{i=1, i\neq i_l}^n |a_{ik}| \equiv S_1. 
\end{align*}

We cannot estimate $S_1$ directly as we did in part (a) but if we define 
\begin{equation}
S_2 = \sum_{k=1, k \neq j_l}^p |a_{i_lk}| \sum_{i=1, i\neq i_l}^n |a_{ik}|,
\end{equation}
then 
\begin{equation}
S_2 \leq \left(\sup_k \sum_{i=1, i\neq i_l}^n |a_{ik}| \right) \sum_{k=1, k \neq j_l}^p |a_{i_lk}|,
\end{equation}
which tells us that $S_2$ is negligible with respect to $a_{i_lj_l}^2$, again by Lemma \ref{lem1}. Now, 
\begin{equation}\label{diff}
S_1 - S_2= |a_{i_lj_l}|\sum_{k=1, k \neq i_l}^n |a_{i_lk}|.
\end{equation}
which is also negligible with respect to $a_{i_lj_l}^2$ since $ \sum_{k=1, k \neq j_l}^n |a_{i_lj_l}|$ is negligible by a direct application of part (b) of Lemma \ref{lem1}. Hence, 
$S_1$ is also negligible with respect to $a_{i_1j_1}^2$.

We now know that $a_{i_lj_l}^2(1+o(1))$, $1 \leq l \leq k$ are eigenvalues of $X_n$. However, this does not imply that they are exactly the $k$ top eigenvalues. At the moment this is true only for the maximum, by Corollary (\ref{cor2}), and we need to check it for $1 < l \leq k$. In other words, what we get from the last statement is that for all $1 < l \leq k$, for $p$ large enough: 

\begin{equation}
\lambda_{l} \geq a^2_{i_lj_l}(1+o(1)),
\end{equation}
and we need to prove the reverse inequality. To achieve our goal, we consider the compression of the matrix $X_n$ step by step, i.e., we cut from $A_n$ the row $i_1$ and from $A_n^t$ the column $i_1$ and then we compute their product. By part (a) of Lemma \ref{lem1}, the entry $a_{i_2j_2}$ is still in the matrix and the product $X^{(2)}_n$ is just the matrix $X_n$ without the row and column $i_1$. Now we know by the Cauchy Interlacing Theorem, see \cite{Bha}, Corollary $3.1.5$,  that if $\kappa_1 \geq \ldots \geq \kappa_{p-1}$ are the eigenvalues of $X^{(2)}_n$, we have:

\begin{equation}\label{cauchy}
\lambda_{1} \geq \kappa_1 \geq \lambda_2 \geq \kappa_2 \geq \ldots \geq \kappa_{p-1} \geq \lambda_p.
\end{equation}

Combining (\ref{cauchy}) with Corollary \ref{cor2} applied for the matrix $X^{(2)}_n$, we get the desired inequality for $\lambda_2$. Repeating the same argument for all $1 \leq l \leq k$, the proof is complete.

\end{proof} 


\section{Wigner matrices when $2 \leq \alpha < 4$}\label{wigner}

Now we consider the symmetric random matrix $(a_{ij})_{i,j=1}^n$ where the entries of $A_n$ are centered i.i.d. and satisfy (\ref{def1}) with $2 \leq \alpha < 4$. We treat separately the case where $\alpha = 2$.

\subsection{Truncation}
The main difference between the proof of this section to the previous one is that here we should care about the contribution given by the bulk of the spectra, that is we should control in some way the smaller entries of the matrix $A_n$ and then proceed as before. Thus, to investigate the behavior of the largest eigenvalue $\lambda_1$, we split the above random matrix as follows. Let $\beta$ be such that
\begin{equation}\label{beta}
\frac{1}{\alpha} < \beta < \frac{2(8-\alpha)}{\alpha(10-\alpha)}, 
\end{equation}
and we define
\begin{equation}\label{a2}
A_1=(A_{ij}\indi_{|a_{ij}|\leq n^{\beta}})_{i,j=1}^n, \quad A_2 = A_n - A_1.
\end{equation} 

Since $\beta < \frac{2}{\alpha}$, it is also clear that with probability going to 1, the largest entry of $A_n$ is the largest entry of $A_2$.  The condition that $\beta > 1/ \alpha$ is assumed to guarantee that we can study the asymptotic behaviour of the eigenvalues of $A_2$ in a similar way of the previous section. On the other hand, the condition that $\beta < \frac{2(8-\alpha)}{\alpha(10-\alpha)}$ is assumed to guarantee that the spectrum of $A_1$, properly normalized, remains bounded. 

\subsection{Bounding the spectrum of $A_1$}\label{comb}

We first investigate the behavior of the largest eigenvalue of $A_1$ referring the reader to the results of \cite{Bai} and \cite{Yin}. These papers deal with the case of random Wigner matrices with the presence of a finite fourth moment and they prove boundedness of the spectra. Fix some $\epsilon>0$ such that 
$$\epsilon <\min \big \{\frac{1}{\alpha} -\frac{1}{4}, \frac{1}{\alpha} -\frac{\beta}{2} ,\frac{1}{16}(\frac{8}{\alpha}-1 -\beta(5-\frac{\alpha}{2}))\big \}.$$
Here, we will prove that the largest eigenvalue and the smallest eigenvalue of $\frac{1}{n^{2/\alpha -\epsilon}}A_1$ are bounded on a set of probability arbitrarily close to $1$. In this direction, our main result in this subsection will be:

\begin{prop}\label{propw}
Let $s_n$ be some sequence going to infinity in such a way that $\log n << s_n<<n^{\gamma}$ where $0<\gamma \leq \min \{ \frac{1}{8}(\frac{8}{\alpha}-1 -\beta(5-\frac{\alpha}{2})), \frac{1}{2\alpha}-\frac{\beta}{4}\}$. Then there exists a constant $C>0$ such that $\E\left(\text{Tr} (\frac{A_1}{n^{2/\alpha -\epsilon}})^{2s_n}\right) < C (2\sigma)^{2s_n}\frac{n}{s_n^{3/2}}. $
\end{prop}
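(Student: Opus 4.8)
The plan is to estimate $\E\,\mathrm{Tr}\,(A_1/n^{2/\alpha-\epsilon})^{2s_n}$ by the classical moment method: expand the trace as a sum over closed walks of length $2s_n$ on the complete graph on $n$ vertices,
\[
\E\,\mathrm{Tr}\,A_1^{2s_n} = \sum_{i_0,i_1,\dots,i_{2s_n-1}} \E\!\left[(A_1)_{i_0 i_1}(A_1)_{i_1 i_2}\cdots (A_1)_{i_{2s_n-1} i_0}\right],
\]
and then organize the walks by the combinatorial structure of the multigraph they trace out. Since the entries of $A_1$ are centered, independent up to symmetry, and truncated at level $n^\beta$, only walks in which every edge is traversed at least twice survive, and the leading contribution comes from walks that trace out a tree, each edge used exactly twice — these are in bijection with pairs (plane tree on $s_n+1$ vertices, non-crossing pairing), the Catalan structure, giving the $(2\sigma)^{2s_n}$ factor together with the $\frac{n}{s_n^{3/2}}$ from the number of such walks ($\sim n^{s_n+1}\cdot \mathrm{Cat}_{s_n}$, and $\mathrm{Cat}_{s_n}\sim 4^{s_n}s_n^{-3/2}$).

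The real work is controlling the error terms, i.e. walks whose multigraph has fewer than $s_n$ edges or has some edge of multiplicity $\geq 3$; here the truncation is essential because without a finite fourth moment the naive bound $\E|a_{ij}|^k$ for $k\geq 3$ would be infinite. Concretely I would stratify walks by the number $v$ of \emph{distinct} vertices and by the excess $2s_n - 2(v-1)$ (so a tree-walk has $v = s_n+1$ and zero excess). For a walk with $v$ distinct vertices one gets a factor $n^v$ from the vertex choices and a product of moments $\prod \E|a|^{m_e}$ over the distinct edges $e$ with multiplicities $m_e\geq 2$. The edges of multiplicity exactly $2$ each contribute a bounded second moment $\sigma^2$; each edge of higher multiplicity $m_e$ contributes at most $\E\big[|a_{ij}|^{m_e}\bbone_{|a_{ij}|\leq n^\beta}\big] \lesssim n^{\beta(m_e-\alpha)}L(n^\beta)$ (for $m_e>\alpha$), which is where the constraint $\beta < \frac{2(8-\alpha)}{\alpha(10-\alpha)}$, equivalently the bound $\gamma \leq \frac18(\tfrac8\alpha - 1 - \beta(5-\tfrac\alpha2))$ in the hypothesis on $s_n$, gets used: it must beat both the combinatorial entropy of the walks with excess and the normalization $n^{2s_n(2/\alpha-\epsilon)}$. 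I would also need the standard combinatorial bound on the number of closed walks of length $2s_n$ with $v$ distinct vertices realizing a given ``shape'' — this is where the inputs from \cite{Sos3} and \cite{SosPec} come in, since they provide the sharp counting of walks with prescribed excess and the control of the $s_n^{-3/2}$ correction, rather than just the crude $C^{s_n}$ bound that would suffice for an upper bound without the right constant.

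In more detail, the steps in order: (1) reduce to walks where every edge has multiplicity $\geq 2$ by centering; (2) isolate the tree-walks and show their total contribution is $\leq C(2\sigma)^{2s_n}\tfrac{n}{s_n^{3/2}}$ — this uses that replacing the truncated second moment $\E[a_{ij}^2\bbone_{|a_{ij}|\leq n^\beta}]$ by $\sigma^2$ costs only a factor $(1+o(1))^{s_n}$, which is absorbed into $C$ provided $s_n = o(n^\gamma)$ with $\gamma$ small (this is the role of $\gamma \leq \frac{1}{2\alpha} - \frac\beta4$, controlling $s_n \cdot (\sigma^2 - \E[a^2\bbone])/\sigma^2 = o(1)$); (3) bound walks with excess $\Delta := 2s_n - 2(v-1) \geq 2$ by summing over $\Delta$ the quantity $(\text{number of shapes with excess }\Delta)\times n^{v}\times (\text{moment product})\times n^{-2s_n(2/\alpha-\epsilon)}$, and show each such term is geometrically small in $\Delta$ and that even $\Delta = 2$ is $o(n/s_n^{3/2})$ relative to the main term. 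The main obstacle is step (3): getting the combinatorics of high-excess walks to interact correctly with the truncation-dependent moment bounds so that the upper limit on $\beta$ in \eqref{beta} is exactly what makes everything summable — this is the delicate separation-of-scales estimate, and it is where one must follow \cite{Sos3} and \cite{SosPec} carefully rather than use soft bounds. The factor-$2$ improvement over GOE in the range of $\alpha$ (and the remark about \cite{Ruz}) lives precisely in how tightly one can do this counting.
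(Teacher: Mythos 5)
Your sketch follows the same broad strategy as the paper (moment method, stratification by walk geometry, truncation-dependent moment bounds \`a la Lemma~\ref{moments}), but there are three places where the proposal as written would not close, and one is a genuine missing idea.

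\textbf{Odd paths.} You reduce to walks in which every edge has multiplicity $\geq 2$ and then stratify uniformly by excess. This conflates two very different classes. The marked/unmarked Dyck-path combinatorics, and with it the sharp count $\frac{(2s_n)!}{s_n!(s_n+1)!}$ that produces the $4^{s_n} s_n^{-3/2}$ factor, is only available for \emph{even} paths, i.e.\ those in which every edge has \emph{even} multiplicity. Walks containing an edge of odd multiplicity $\geq 3$ (which do contribute, since $\E \hat a_{ij}^{3} \neq 0$ in general even after recentering) break the Dyck-path bijection entirely. These ``odd paths'' are handled in the paper by the gluing/insertion surgery of \cite{SosPec}, which maps them onto shorter even paths and pays a price of order $(s_n n^{\beta}/n^{2/\alpha-\epsilon})^{2l}$ per $2l$ unreturned edges. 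Your proposal does not contain this mechanism, and it cannot be absorbed into the excess stratification without a new idea. (You do cite \cite{SosPec}, but for ``the sharp counting of walks with prescribed excess and the control of $s_n^{-3/2}$'' — the relevant technology there is for odd paths, not for the even-path count, which goes back to \cite{Sos2}.)

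\textbf{Misattributed role of the second constraint on $\gamma$.} You say that $\gamma \leq \frac{1}{2\alpha} - \frac{\beta}{4}$ controls the cost of replacing $\E[a_{ij}^2 \bbone_{|a_{ij}|\leq n^{\beta}}]$ by $\sigma^2$. For $2<\alpha<4$ the truncated second moment is $\leq \sigma^2$, so this substitution costs nothing in an upper bound. The constraint is in fact needed to make the odd-path contribution summable: one needs $s_n n^{\beta} \ll n^{2/\alpha - \epsilon}$, and since $s_n \ll n^{\gamma}$ and $\epsilon < \frac{1}{\alpha} - \frac{\beta}{2}$, the worst case is exactly $\gamma \leq \frac{1}{2\alpha} - \frac{\beta}{4}$. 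Once you insert the odd-path analysis, this constraint finds its proper home.

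\textbf{Centering.} Your opening sentence asserts that the entries of $A_1$ are centered, which is false: truncation at $n^{\beta}$ destroys centeredness even if $\E a_{ij}=0$. The paper handles this explicitly (Lemma~\ref{center} and the estimate around \eqref{tricen}), showing $|\E \hat a_{ij}| \lesssim L(n^{\beta}) n^{\beta(1-\alpha)}$, so that replacing $A_1$ by $A_1 - \E A_1$ shifts the spectrum by $O(n^{\beta(1-\alpha)+1})=o(b_n)$. You implicitly acknowledge a centering step later, but this needs to be spelled out rather than folded into the hypothesis. Apart from these points, your stratification by number of distinct vertices is a coarser invariant than the paper's ``type'' $(n_0,\ldots,n_{s_n})$ recording multiplicities of self-intersections, and it is the finer type that lets Lemma~\ref{Lem: expectation} distribute the truncated moment bounds vertex-by-vertex in a way that sums cleanly against the combinatorial entropy.
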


Before giving the proof, we indicate how to use Proposition \ref{propw} to deduce the desired result, that is the boundedness of the largest eigenvalue of $\frac{1}{n^{2/\alpha -\epsilon}}A_1$. 
We have
\begin{eqnarray*}
\Pro\left (\lambda_{1}(\frac{1}{n^{2/\alpha -\epsilon}}A_1) \geq 4 \sigma\right ) &\leq& \frac{E\left (\lambda_{1}(\frac{1}{n^{2/\alpha -\epsilon}}A_1)^{2s_n}\right )}{(4\sigma)^{2s_n}} \\
&\leq& \frac{\E\left (\text{Tr}(\frac{A_1}{n^{2/\alpha -\epsilon}})^{2s_n}\right )}{(4\sigma)^{2s_n}} \\
&\leq& \exp(-\eta s_n) 
\end{eqnarray*}
for some constant $\eta > 0$, proving that $\lambda_{1}(\frac{1}{n^{2/\alpha -\epsilon}}A_1)$ is bounded in probability. By symmetry, one gets the same result for the smallest eigenvalue.

\begin{proof}[Proof of Proposition \ref{propw}]
To estimate $\E\left (\text{Tr}(\frac{A_1}{n^{2/\alpha -\epsilon}})^{2s_n}\right)$, we use the moment method. Developing the expectation, we have that
\begin{equation}\label{expec}
\E\left (\text{Tr}(A_1)^{2s_n}\right )= \sum_\mathscr{P} \E \hat{a}_{i_0i_1}\hat{a}_{i_1i_2}\hat{a}_{i_2i_3}\hat{a}_{i_3i_4}\hat{a}_{i_4i_5}\ldots \hat{a}_{i_{2s_n-2}i_{2s_n-1}}\hat{a}_{i_{2s_n-1}i_0},
\end{equation}
where $\hat{a}_{ij}=a_{ij}\indi_{\{|a_{ij}|\leq n^{\beta}\}}$ and $\mathscr{P}$ denotes the set of all closed paths $P=\{i_0,i_1,\ldots,i_{2s_n-1},i_0\}$ with a distinguished origin, in the set $\{1,2,\ldots,n\}$. 

The following two lemmas are a direct consequence of \cite{Fel}, chapter $VIII.9$, Theorem $2.23$.

\begin{lem}\label{center} Let $C_n = \E(\hat{a}_{ij}) = \E(a_{ij}\indi_{|a_{ij}|\leq n^{\beta}})$. Then for $n$ large enough,  $|C_n| \leq L(n^{\beta})n^{\beta(1-\alpha)}$ where $L$ is defined in (\ref{def1}).
\end{lem}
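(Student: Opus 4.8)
The plan is to estimate the centered truncated first moment $C_n = \E(a_{ij}\indi_{|a_{ij}|\le n^\beta})$ by integrating the tail $\bar F(x) = L(x)x^{-\alpha}$ against $x$. First I would split $C_n = \E(a_{ij}) - \E(a_{ij}\indi_{|a_{ij}|>n^\beta})$, and since the entries are centered ($\E(a_{ij})=0$ because $\alpha\ge 2$ in this section), this reduces to bounding $|C_n| = |\E(a_{ij}\indi_{|a_{ij}|>n^\beta})| \le \E(|a_{ij}|\indi_{|a_{ij}|>n^\beta})$. The latter is a standard tail integral: writing $\E(|a_{ij}|\indi_{|a_{ij}|>t}) = t\bar F(t) + \int_t^\infty \bar F(x)\,dx$ via integration by parts (Fubini), and plugging in $\bar F(x) = L(x)x^{-\alpha}$.

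Then I would invoke Karamata's theorem on the asymptotic behaviour of integrals of regularly varying functions (this is exactly the content of \cite{Fel}, VIII.9): for $\alpha > 1$, $\int_t^\infty L(x)x^{-\alpha}\,dx \sim \frac{1}{\alpha-1} L(t) t^{1-\alpha}$ as $t\to\infty$. Combining this with the boundary term $t\bar F(t) = L(t)t^{1-\alpha}$, one gets $\E(|a_{ij}|\indi_{|a_{ij}|>t}) \le \frac{\alpha}{\alpha-1}L(t)t^{1-\alpha}(1+o(1))$, hence for $t = n^\beta$ and $n$ large, $|C_n| \le L(n^\beta) n^{\beta(1-\alpha)}$ (the constant $\frac{\alpha}{\alpha-1}$ being absorbed by a slight enlargement of $L$, or simply noting that for $n$ large the $(1+o(1))$ factor makes the bound with a clean constant hold — but since the claimed bound already has no explicit constant beyond $1$, one should be mildly careful here and perhaps state it as $|C_n| \le C\,L(n^\beta)n^{\beta(1-\alpha)}$, or note the constant can be hidden inside the slowly varying function, which is the convention used elsewhere in the paper via \eqref{Luc}).

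The case $\alpha = 2$ is covered identically since $\alpha > 1$; there is no boundary issue because we are integrating a genuinely integrable tail. The main (and only real) obstacle is being precise about whether the constant $\frac{\alpha}{\alpha-1}$ should appear: the cleanest route is to observe that $L(n^\beta)\cdot\frac{\alpha}{\alpha-1}$ is again slowly varying in $n$, or to simply use $\bar F(x)\le L(x)x^{-\alpha}$ together with \eqref{Luc} to replace $L(n^\beta)$ by $L(n^\beta)n^{\beta\epsilon'}$ for arbitrarily small $\epsilon'>0$ and absorb the constant — in the applications of this lemma only the exponent $\beta(1-\alpha)$ (up to such $\epsilon'$) matters. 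So in the write-up I would: (i) reduce to the tail integral using centering, (ii) apply integration by parts to express it via $\int \bar F$, (iii) apply Karamata, (iv) absorb constants into $L$ as is done throughout the paper.
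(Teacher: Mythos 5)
Your proof is correct and is precisely the argument behind the paper's citation to Feller, Chapter VIII.9 (Karamata's theorem on truncated moments of regularly varying distributions): reduce to the tail via centering, integrate by parts to get $t\bar F(t)+\int_t^\infty\bar F$, apply Karamata, and absorb the constant $\alpha/(\alpha-1)$. You are also right to flag that the lemma as stated silently drops that constant; as you observe, this is harmless since it can be hidden in the slowly varying factor or in the $n^{\beta\epsilon'}$ slack from (\ref{Luc}) that is used wherever the lemma is invoked.
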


\begin{lem}\label{moments} For any $k \geq 2$, let $D_n^{2k} = \E(\hat{a}_{ij}^{2k}) = \E(a_{ij}^{2k}\indi_{|a_{ij}|\leq n^{\beta}})$. Then for $n$ large enough, there exists a slowly varying function $l_0$ such that $D_n^{2k} \leq l_0(n^{\beta})n^{\beta(2k-\alpha)}$.
\end{lem}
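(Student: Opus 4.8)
The plan is to reduce the truncated moment $D_n^{2k}$ to a one-dimensional integral of the tail $\bar F$ and then invoke Karamata's theorem (the same source \cite{Fel}, VIII.9), exactly as announced in the text. Write $Y=|a_{ij}|\ge 0$, so that $D_n^{2k}=\E\big(Y^{2k}\indi_{\{Y\le n^\beta\}}\big)$. Rather than integrating by parts (which would need care with possible atoms of $F$), I would apply the layer-cake formula to the nonnegative bounded random variable $Z=Y^{2k}\indi_{\{Y\le n^\beta\}}$: for $t\ge 0$ one has $\{Z>t\}=\{t^{1/2k}<Y\le n^\beta\}\subseteq\{Y>t^{1/2k}\}$, hence $\Pro(Z>t)\le \bar F(t^{1/2k})$, and since $Z\le (n^\beta)^{2k}$,
\[
D_n^{2k}=\int_0^{(n^\beta)^{2k}}\Pro(Z>t)\,dt\le\int_0^{(n^\beta)^{2k}}\bar F(t^{1/2k})\,dt=2k\int_0^{n^\beta}s^{2k-1}\bar F(s)\,ds,
\]
the last step being the substitution $t=s^{2k}$. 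The inequality that makes the estimate work is $2k-\alpha>0$, valid for every $k\ge 2$ because $\alpha<4$; this is precisely what forces the integral to be dominated by its behaviour near the upper endpoint $s=n^\beta$.

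Next I would evaluate this integral by Karamata. Fix $x_0$ large enough that the representation $\bar F(s)=L(s)s^{-\alpha}$ of (\ref{def1}) is in force for $s\ge x_0$, and split $\int_0^{n^\beta}=\int_0^{x_0}+\int_{x_0}^{n^\beta}$. The first piece is a finite constant $c_0=c_0(k)$ (since $\bar F\le 1$ and $s\mapsto s^{2k-1}$ is integrable on $[0,x_0]$). On $[x_0,\infty)$ the integrand $s^{2k-1}\bar F(s)=s^{2k-1-\alpha}L(s)$ is regularly varying of index $2k-1-\alpha>-1$, so Karamata's theorem gives $\int_{x_0}^{T}s^{2k-1}\bar F(s)\,ds\sim \tfrac{1}{2k-\alpha}\,T^{2k-\alpha}L(T)$ as $T\to\infty$. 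Combining, for large $n$ we get $D_n^{2k}\le 2k\big(c_0+\tfrac{1+o(1)}{2k-\alpha}(n^\beta)^{2k-\alpha}L(n^\beta)\big)$.

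Finally I would absorb the lower-order terms into a slowly varying prefactor. Since $\beta(2k-\alpha)>0$ and $L$ is slowly varying, $(n^\beta)^{2k-\alpha}L(n^\beta)\ge (n^\beta)^{2k-\alpha-\eta}\to\infty$ for small $\eta>0$ (by (\ref{Luc})); hence $c_0$ is negligible against $(n^\beta)^{2k-\alpha}L(n^\beta)$ and the factor $1+o(1)$ is eventually $\le 2$. Thus there is $n_0=n_0(k)$ with $D_n^{2k}\le \tfrac{2(2k+1)}{2k-\alpha}(n^\beta)^{2k-\alpha}L(n^\beta)$ for $n\ge n_0$, and it suffices to put $l_0(x):=\tfrac{2(2k+1)}{2k-\alpha}\,L(x)$, which is slowly varying because $L$ is (for the finitely many $n<n_0$ one just enlarges the constant in $l_0$, or ignores them, since Proposition \ref{propw} concerns $n\to\infty$).

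There is essentially no serious obstacle: the statement is just an instance of Karamata's theorem, and the only points needing a moment's attention are the reduction to a tail integral without assuming $F$ has a density (handled by the layer-cake step above) and the bookkeeping that turns the asymptotic $\sim$ into a clean inequality for all large $n$. One cosmetic caveat: the constant $\tfrac{2(2k+1)}{2k-\alpha}$ in $l_0$ depends on $k$ (and degrades as $\alpha\uparrow 4$), which is harmless since $k$ is fixed here; if a bound uniform in $k$ were ever wanted, one would instead use $Y^{2k}\le (n^\beta)^{2k-\alpha}Y^{\alpha}$ on $\{Y\le n^\beta\}$ together with the fact that $\E\big(Y^{\alpha}\indi_{\{Y\le n^\beta\}}\big)$ is itself slowly varying in $n^\beta$.
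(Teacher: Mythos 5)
Your argument is correct and is essentially the paper's: the text proves this lemma simply by citing Feller VIII.9 (Karamata's theorem on truncated moments of regularly varying tails), and your layer-cake reduction to $2k\int_0^{n^\beta}s^{2k-1}\bar F(s)\,ds$ followed by Karamata is precisely the standard proof of that cited result. Your closing caveat is also well taken -- since Lemma \ref{Lem: expectation} later uses powers $l_0(n^\beta)^i$ of a single slowly varying function across all types, the $k$-uniform variant via $Y^{2k}\le (n^\beta)^{2k-\alpha}Y^\alpha$ is actually the form one wants.
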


Thus it follows that 
\begin{equation}
|\lambda_1(A_1-\E A_1)-\lambda_1(A_1)|\leq |C_n|n \leq L(n^{\beta})n^{\beta(1-\alpha)+1}, 
\end{equation}
so one can write
\begin{equation}\label{tricen}
\lambda_{1}(\frac{1}{b_{n}}A_n) \leq \lambda_{1}(\frac{1}{b_{n}}(A_1 - \E A_1)) + \lambda_{1}(\frac{1}{b_{n}}(A_2)) + \frac{1}{b_{n}}|\lambda_1(A_1-\E A_1)-\lambda_1(A_1)|,
\end{equation}
where the last term tends to zero as $n$ tends to infinity since $\beta(1-\alpha)+1-\frac{2}{\alpha}<0$.
Thus, we may assume that $A_1$ is also centered, i.e, the truncated variables $\hat{a}_{ij}$ are centered.

Now we move back to equation $(\ref{expec})$, to compute the expected value of the trace of $\frac{A_1^{2s_n}}{n^{(4/\alpha -2\epsilon)s_n}}$. The first step after the centering is to consider the contribution of even paths, i.e. paths such that each edge occurs an even number of times.  

We refer to the paper of A. Soshnikov \cite{Sos2} for most of the details and further notation. To each path $P= i_o\rightarrow i_1 \rightarrow i_2 \ldots \rightarrow i_{2s_{n-1}} \rightarrow i_0$, we first associate a set of $s_n$ ``marked instants'' as follows. We read the edges of $P$ successively. The instant at which an edge $i\rightarrow j$ is read is then said to be marked if up to that moment (inclusive) the edge $(i,j)$ was read an odd number of times. Other instants are said to be unmarked.
Now, the number of possible arrangements of marked/unmarked instants in a path of length $2s_n$ is equal to the number of Dick paths, i.e., the number of simple random walks of length $2s_n$, starting and ending at $0$, and conditioned to remain in the positive quadrant. The number of Dyck paths is known to be the Catalan number: $\frac{(2s_n)!}{s_n!(s_n+1)!}$. 

We say that a vertex is marked if it occurs at a marked instant. For any $0\leq k \leq s_n$, we then define $N_{k}$ to be the subset of vertices in $\{1, \ldots, n\}$ occurring $k$ times as a marked vertex. Any vertex belonging to $N_k$ is said to be a vertex of self-intersection of type $k$. For any $0\leq k\leq s_n,$ we denote by $n_k$ the cardinality of $N_k$ and call $(n_o, n_1, \ldots, n_{s_n})$ the type of $P.$
Note that all the vertices that appear in $ P$ belong to $N_k, k \geq 1$ except possibly the origin $i_o$.
 
Then, it is easy to see that $\{1, \ldots, n\}$ splits as the disjoint union of the sets $N_o, N_1, \ldots , N_{s_n}$. From these definitions, one can see that
\begin{equation}\label{graph}
\sum_{k=0}^{s_n} n_k = n, \quad \text{and} \quad \sum_{k=0}^{s_n} kn_k = s_n.
\end{equation}

To estimate the number of possible paths and their contribution to the expectation, we proceed as follows.
We first determine the set of marked instants and the type of the path. Then, we assign labels chosen in $\{1, \ldots, n\}$ to each marked instant and to the origin of the path. Finally, we assign labels to each unmarked instant and consider the expectation of the corresponding path.\\
Given the set of marked instants and the type of the path $(n_o, n_1, \ldots, n_{s_n})$, one has exactly $\frac{s_n!}{\prod_{k=2}^{s_n} (k!)^{n_k}}$ ways to distribute the marked instants into the possible classes of self-intersection. The number of ways to distribute the vertices of $\{1, \ldots, n\}$ into the set of possible classes $N_o, N_1, \ldots, N_{s_n}$ and determine the origin of the path is at most 
$\frac{n!}{n_0!n_1!\ldots n_{s_n}!}n.$ This is because the origin is in general a non-marked vertex.
There now remains to give an upper bound on the number of ways to determine vertices at unmarked instants, that is fill in the blanks of the path. It was proved in \cite{Sos2} that the number of ways to assign labels at unmarked instants is not greater than $\prod_{k=2}^{s_n}(2k)^{kn_k}$. Indeed, the number of possible ways to determine the right endpoint of an edge starting from a vertex of type $k$ at an unmarked instant is at most $2k.$

To consider the expectation of a path $ P$ of type $(n_o, n_1, \ldots, n_{s_n})$, we will need the following Lemma. 
\begin{lem}\label{Lem: expectation}
Consider an even path of type $(n_0,n_1,\ldots,n_{s_n})$. One has
\begin{equation}
\E \left(\hat{a}_{i_0i_1}\hat{a}_{i_1i_2}\hat{a}_{i_2i_3}\hat{a}_{i_3i_4}\hat{a}_{i_4i_5}\ldots \hat{a}_{i_{2s_n-2}i_{2s_n-1}}\hat{a}_{i_{2s_n-1}i_0}\right) \leq \sigma^{2s_n}\prod_{i\geq 2}\left (l_o(n^{\beta})^{i}n^{\beta (2i-(\alpha/2-1))}\right)^{n_i}. 
\end{equation}
\end{lem}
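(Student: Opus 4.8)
The plan is to estimate the expectation of a single even path edge by edge, grouping together the repetitions of each distinct edge. Since the path is even, each distinct edge $(u,v)$ occurring in $P$ is traversed some even number $2m_{uv}\geq 2$ of times, and by independence of the (centered, truncated) entries the expectation factorizes as $\E\big(\prod_{\text{edges }e} \hat a_e\big)=\prod_{\text{distinct }(u,v)} \E\big(\hat a_{uv}^{2m_{uv}}\big)$, where the product runs over distinct unordered pairs actually visited. The first step is therefore to bound each factor $\E(\hat a_{uv}^{2m})$: for $m=1$ we simply use $\E(\hat a_{uv}^2)\leq \E(a_{ij}^2)=\sigma^2$ (here the second moment is finite because $\alpha\geq 2$, and when $\alpha=2$ one should read $\sigma^2$ as a suitable truncated variance — this is the case treated separately in the text), while for $m\geq 2$ we invoke Lemma \ref{moments} to get $\E(\hat a_{uv}^{2m})=D_n^{2m}\leq l_0(n^\beta)^{?}\,n^{\beta(2m-\alpha)}$; more precisely we want to peel off one factor of $\sigma^2$ and keep the rest as the ``excess'', writing $D_n^{2m}\leq \sigma^2\cdot l_0(n^\beta)^{2m}\,n^{\beta(2m-\alpha)}$ for $m\geq 2$, absorbing constants into the slowly varying function.

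The second step is the bookkeeping that converts the product over distinct edges into the product over self-intersection classes $\prod_{i\geq 2}(\cdots)^{n_i}$ appearing in the statement. Here I would use the standard fact (from \cite{Sos2}, already invoked in the surrounding text) that a vertex of self-intersection type $i$ — i.e.\ a vertex read $i$ times at marked instants — has at most $i$ distinct edges emanating from it that are ``closed'' at that vertex, and each such edge, if traversed $2m$ times with $m\geq 2$, contributes a factor bounded by $\sigma^2\,l_0(n^\beta)^{2m}n^{\beta(2m-\alpha)}$. Counting the total multiplicity of traversals carried by the type-$i$ vertices against the constraint $\sum_i i\,n_i=s_n$, and using that $2i$ traversals' worth of edge-weight at a type-$i$ vertex is dominated by the worst case $n^{\beta(2i-\alpha)}$ per unit, one collects a total of $\prod_{i\geq 2}\big(l_0(n^\beta)^{i}n^{\beta(2i-(\alpha/2-1))}\big)^{n_i}$; the shift from $\alpha$ to $\alpha/2-1$ in the exponent comes from the fact that at a type-$i$ vertex the relevant count is $i$ edges but the ``free'' second moments $\sigma^2$ have already been extracted $s_n$ times in total, so the net polynomial surplus per type-$i$ vertex is $\beta\cdot 2i$ minus a correction of order $\beta(\alpha/2-1)$ per vertex. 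The remaining $\sigma^{2s_n}$ is exactly the product of one $\sigma^2$ per marked instant (there are $s_n$ of them), which accounts for all $2s_n$ edge-traversals at the ``lowest order'' rate.

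The step I expect to be the main obstacle is precisely this last combinatorial accounting: matching the number $s_n$ of extracted $\sigma^2$-factors with the $s_n$ marked instants while simultaneously tracking, for each self-intersection class $N_i$, how many distinct over-traversed edges hang off its vertices and what the worst-case polynomial weight $n^{\beta(2i-(\alpha/2-1))}$ of such a vertex is. One has to be careful that an edge traversed $2m$ times with large $m$ is not double-counted across the two endpoints, and that the origin $i_0$ (which may be unmarked) does not spoil the bound; both are handled by the inequalities in (\ref{graph}) together with the monotonicity of $n\mapsto n^{\beta(2m-\alpha)}$ in $m$ (valid since $2>\alpha/?$ fails but $2m-\alpha>0$ for $m\geq 2$ as $\alpha<4$), which lets us always reduce to the extreme configuration where each type-$i$ vertex carries exactly $i$ doubly-traversed edges. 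Once the weight of a path of a given type is bounded by $\sigma^{2s_n}\prod_{i\geq 2}(l_o(n^\beta)^{i}n^{\beta(2i-(\alpha/2-1))})^{n_i}$, the lemma is proved, and it will feed directly into the counting of paths by type in the proof of Proposition \ref{propw}.
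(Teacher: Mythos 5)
Your high-level plan (factorize the expectation over distinct unordered edges, bound edges seen exactly twice by $\sigma^2$, bound over-traversed edges via Lemma~\ref{moments}, then redistribute the surplus to the self-intersection classes $N_i$) is the same as the paper's. But the redistribution step is exactly where the paper's proof does something specific that you have not supplied, and that you yourself flag as the likely obstacle.

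The paper's mechanism is the following. For an edge $(ij)$ seen $2l(ij)\geq 4$ times, let $l(i;ij)$ and $l(j;ij)$ be the numbers of marked occurrences of $i$ (resp.\ $j$) at that edge, so $l(i;ij)+l(j;ij)=l(ij)$, and set $L=\max$, $L'=\min$. The exponent $\beta(2l(ij)-\alpha)$ is then split \emph{per endpoint} using the elementary identity
\begin{equation*}
2l(ij)-\alpha \;=\; \Bigl(2L(ij)+\tfrac{2-\alpha}{2}\Bigr)+\Bigl(2L'(ij)-2+\tfrac{2-\alpha}{2}\Bigr),
\end{equation*}
assigning the first bracket to one endpoint and the second to the other. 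Summing over edges and then regrouping by vertex, using the fact that the total marked occurrences of a vertex of type $k$ at edges seen $\geq 4$ times is at most $k$ (together with the existence, for each such edge, of a type-$\geq 2$ endpoint, which is the fact from \cite{Sos2} the paper invokes), one arrives at a total exponent $\leq \sum_{i\geq 2}\bigl(2i+\tfrac{2-\alpha}{2}\bigr)n_i = \sum_{i\geq 2}\bigl(2i-(\alpha/2-1)\bigr)n_i$. This is where $\alpha/2-1$ comes from: it is half of $\alpha$ from splitting $-\alpha$ symmetrically into $\tfrac{2-\alpha}{2}+\tfrac{2-\alpha}{2}$, with the extra $-2$ absorbed against $L'\geq 1$. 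Your proposed explanation --- that the shift from $\alpha$ to $\alpha/2-1$ reflects that ``the free second moments $\sigma^2$ have already been extracted $s_n$ times in total'' --- does not produce this exponent, and the phrase ``worst case $n^{\beta(2i-\alpha)}$ per unit'' also doesn't track: the per-edge exponent is $2l(ij)-\alpha$, not $2i-\alpha$, and there is no single ``worst case'' to invoke without the $L$/$L'$ split because a type-$i$ vertex can sit at the minority end of an edge traversed much more than $2i$ times. So this is a genuine gap: you have identified the shape of the argument but not the device (the bipartition of each edge-exponent between its two endpoints) that makes the vertex-type bookkeeping close. The rest (the monotonicity used to drop the nonpositive $L'$-term when $l(ij)=2$, absorbing constants into $l_0$, the $\sigma^{2s_n}$ factor from edges seen exactly twice) is fine and matches what the paper does.
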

\begin{proof}
Assume that a non-oriented edge $(ij)$ is seen $2l(ij)$ times. We denote by $l(i;ij)$ (resp. $l(j;ij)$) the number of times $i$ (resp. $j$) is a marked vertex in $(ij)$. We also set $L(ij)=\max \{l(i;ij), l(j;ij)\}$ and $L'(ij)=\min \{l(i;ij), l(j;ij)\}$. 

First, using Lemma \ref{moments}, we deduce that
\begin{eqnarray}\label{ups}
&\prod_{(ij): l(ij)>1}\mathbb{E}\hat{a}_{ij}^{2l(ij)}&\leq \prod_{(ij): l(ij)>1}l_o(n^{\beta})n^{\beta (2l(ij) -\alpha)}\crcr
&&\leq \prod_{(ij): l(ij)>1}l_o(n^{\beta})n^{\beta (2L(ij)+\frac{2-\alpha}{2}+2L'(ij)-2 +\frac{2-\alpha}{2})}
\end{eqnarray}

Second, we change the product in (\ref{ups}) over all edges to a product over all vertices. In fact, one can associate to each edge occurring $4$ times at least one marked occurrence of a vertex of self-intersection. To deal with vertices where $L'(ij)=L(ij)=1$, we say that $L'(ij)$ is associated to the vertex which has the smallest multiplicity in the path. Using the fact that the number of marked occurrences of any vertex in edges seen at least $4$ times cannot exceed the type of the vertex and (\ref{graph}), we get:

\begin{eqnarray}
&\prod_{(ij): l(ij)>1}\mathbb{E}\hat{a}_{ij}^{2l(ij)}&\leq \prod_{(ij): l(ij)>1}l_o(n^{\beta})n^{\beta (2l(ij) -\alpha)}\crcr
&&\leq n^{\beta \sum_{(ij): l(ij)>1}(2L(ij)+\frac{2-\alpha}{2})+\beta \sum_{(ij): l(ij)>2}(2L'(ij)-2 +\frac{2-\alpha}{2})} \prod_{k\geq 2}l_o(n^{\beta})^{kn_k}\crcr
&& \leq \prod_{i\geq 2}\left (l_o(n^{\beta})^{i}n^{\beta (2i-(\alpha-2)/2)}\right)^{n_i}.
\end{eqnarray}

\end{proof}

\begin{rem}\label{Ruzma}
Lemma \ref{Lem: expectation} plays the role of Formula $4.7$ in \cite{Ruz} to bound the contribution of a single path of type $(n_0,n_1,\ldots,n_{s_n})$. Here is where we cannot understand the arguments of \cite{Ruz}. Indeed, with the notation of \cite{Ruz}, we believe that formula $4.7$,
$$\left(\E \prod_{u=0}^{2s_n-1} \xi_{i_ui_{u+1}}| \Omega_{1-\epsilon_n} \right) W_n \leq \frac{1}{4^{s_n}}4^r \prod_{k = 3}^{\frac{p}{2}} (2kC)^{kn_k} \prod_{k=\frac{p}{2}}^{s_n}(2k \Lambda_n^2)^{kn_k}, $$
should be written as 
$$\left(\E \prod_{u=0}^{2s_n-1} \xi_{i_ui_{u+1}}| \Omega_{1-\epsilon_n} \right) W_n \leq \frac{1}{4^{s_n}}4^r \prod_{k = 3}^{\frac{p}{4}} (2kC)^{kn_k} \prod_{k=\frac{p}{4}}^{s_n}(2k \Lambda_n^4)^{kn_k}, $$
since, as argued in Lemma \ref{Lem: expectation}, an edge seen $p$ times implies the occurrence of a marked vertex of type at least $p/4$ but not necessarily of type $p/2$. Mutatis mutandis, the other arguments of Ruzmaikina carry out to show that the universal Tracy-Widom limit holds if $\alpha > 36$.
\end{rem}

We now call $Z_e$ the contribution of all even paths of type $(n_o,n_1, \ldots, n_{s_n})$ to the expectation $\mathbb{E} \text{Tr} \left ( \frac{A_1}{n^{2/\alpha -\epsilon}}\right)^{2s_n}.$
Writing $$\mathbb{E} \text{Tr} \left ( \frac{A_1}{n^{2/\alpha -\epsilon}}\right)^{2s_n}=
\mathbb{E}\text{Tr} \left ( \frac{A_1}{\sqrt n}\right)^{2s_n}\left (\frac{1}{n^{2/\alpha-1/2 -\epsilon}}\right)^{2s_n},$$
we deduce that $Z_e$ is bounded by:

\begin{eqnarray}
&Z_e&\leq \frac{1}{n^{s_n(4/\alpha-2\epsilon)}}\frac{n!}{n_0!n_1!\ldots n_{s_n}!}n\frac{(2s_n)!}{s_n!(s_n+1)!}\frac{s_n!}{\prod_{k=2}^{s_n} (k!)^{n_k}}\prod_{k=2}^{s_n}(2k)^{kn_k}\crcr
&&\times\sigma^{2s_n}\prod_{k=2}^{s_n}\left (l_o^{k}n^{\beta (2k-(\alpha/2-1))})\right)^{n_k} \nonumber \\
&\leq& \frac{1}{n^{(4/\alpha-1-2\epsilon)\sum_{i\geq 1}in_i}}\frac{n\ldots(n_0+1)}{n^{s_n}}n\frac{(2s_n)!}{s_n!(s_n+1)!}\frac{1}{\prod_{k=2}^{s_n}n_k!}\frac{s_n^{s_n-n_1}}{\prod_{k=2}^{s_n} (k!)^{n_k}}\prod_{k=2}^{s_n}(2k)^{kn_k}\crcr
&&\times \sigma^{2s_n}\prod_{k=2}^{s_n}\left (l_o^{k}n^{\beta (2k-(\alpha/2-1))}\right)^{n_k} \nonumber \\
&\leq& 
\frac{1}{n^{(4/\alpha-2\epsilon-1)\sum_{i\geq 1}in_i}}\frac{(2s_n)!}{s_n!(s_n+1)!}n\frac{1}{\prod_{k=2}^{s_n}n_k!}\frac{s_n^{s_n-n_1}}{n^{s_n+n_0-n}}\frac{1}{\prod_{k=2}^{s_n}(ke^{-1})^{kn_k}}\prod_{k=2}^{s_n}(2k)^{kn_k}\crcr
&& \times \sigma^{2s_n}\prod_{k=2}^{s_n}(l_o^{k}n^{\beta (2k-(\alpha/2-1))})^{n_k} \nonumber \\
&\leq&
\frac{(2s_n)!}{s_n!(s_n+1)!}n\sigma^{2s_n}\prod_{k=2}^{s_n}\frac{1}{n_k!}\left[\frac{l_o^{k}s_n^kn^{\beta(2k-(\alpha/2-1))}}{n^{4k/\alpha -2k\epsilon-1}} \right]^{n_k}\frac{1}{n^{n_1(4/\alpha-2\epsilon -1)}}.
\end{eqnarray}
First, we consider the contribution of simple paths, that is paths of type $(n_o,n_1, 0, \ldots, 0).$
We denote $Z_{e,s}$ this contribution. Then 
$$Z_{e,s}\leq \frac{(2s_n)!}{s_n!(s_n+1)!}n\sigma^{2s_n}\frac{1}{n^{s_n(4/\alpha-1-2\epsilon) }}.$$
This follows from the fact that in simple even paths, any edge is seen twice and the choice of the origin and marked vertices determines the path.\\
We next turn to the contribution of paths with self-intersections, which we denote by $Z_{e,i}$.
Now, if we take the sum  over all non-negative integers $n_2, n_3, \ldots n_{s_n}$ such that 
\begin{equation}
\sum_{k=2}^{s_n}n_k > 0,
\end{equation}
we have that:

\begin{equation}
Z_{e,i}\leq \frac{(2s_n)!}{s_n!(s_n+1)!}n\sigma^{2s_n}\sum_{n_2,\ldots,n_{s_n}}\prod_{k=2}^{s_n}\frac{1}{n_k!}\left[\frac{l_o^{k}s_n^kn^{\beta(2k-(\alpha/2-1))}}{n^{4k/\alpha -2k\epsilon-1}} \right]^{n_k}\label{majoini}.
\end{equation}
Now as $\epsilon< 2/\alpha-\beta$ and $s_n n^{2\beta}<<n^{4/\alpha}$, we deduce that 
\begin{eqnarray}&(\ref{majoini})&\leq \frac{(2s_n)!}{s_n!(s_n+1)!}n \sigma^{2s_n} \sum_{M>0}\frac{1}{M!}\left[\frac{Cs_n^2n^{\beta(4-(\alpha/2-1))}}{n^{8/\alpha -4\epsilon-1}}\right]^{M}\crcr
&&\leq C\frac{(2s_n)!}{s_n!(s_n+1)!}n \sigma^{2s_n}\frac{ l_o^2s_n^2n^{\beta(5-\alpha/2)) }}{n^{8/\alpha -4\epsilon-1}}\crcr
&&\leq  C\frac{(2s_n)!}{s_n!(s_n+1)!}n \sigma^{2s_n} l_o^2s_n^2n^{\beta(5-\alpha/2)+1-8/\alpha +4\epsilon}\crcr
&& =o(1)\frac{(2s_n)!}{s_n!(s_n+1)!}n \sigma^{2s_n}.
\end{eqnarray}
In the last line, we have used the fact that $s_n<<n^{(\frac{8}{\alpha}-1 -\beta(5-\frac{\alpha}{2}))/8}.$
\paragraph{}Now, we will see that this is also true for a path such that an edge occurs an odd number of times, proving proposition \ref{propw}. The necessary tools are the gluing and the insertion procedures developed in \cite{SosPec}. We refer to this article for details and notation.  

In \cite{SosPec}, one can prove that given a path $P$ of length $2s_n$ with $2l$ non-returned edges, it is possible to construct a sequence $(P_0, P_1, P_2, \ldots , P_J)$, $1 \leq J \leq 2l$, of subpaths of $P$  such that the concatenated path $W = \bigcup_{i=0}^{J} P_i$, i.e. the path defined as if we read $P_i$ in order, has length $2s_n - 2l$ and belongs to one of the following classes:
\begin{enumerate}
\item[\textbf{A}] $W$ is a closed even path.
\item[\textbf{B}] $W$ is a sequence of $I \leq 2$ closed even paths where each origin is a marked vertex of $P$. 
\item[\textbf{C}] $W$ is a sequence of $I \leq 2$ paths where each origin is a marked vertex of $P$ and the union of these paths has only even edges.
\end{enumerate}
  
The surgery in $P$ consists only to remove the last occurrence of odd edges and reorder the remaining subpaths with the possibility of choosing the direction in which each subpath is read.
To estimate the contribution of odd paths, one can reverse the above procedure, defining an onto map from paths of classes A, B, C to the set of odd paths. This was done in \cite{SosPec} and we concisely describe the method.

In case A, the simplest one, given a closed path $W$ of length $2s_n - 2l$ one needs only to choose $J$ vertices to split $W$, choose the order and direction of each subpath and how to assign and insert the $2l$ unreturned edges. Also, since $\hat{a_{ij}}$ are bounded by $n^{\beta}$, adding these $2l$ repetitions we multiply the contribution of the original path at most by $n^{2l\beta}$. Briefly, the contribution of paths with $2l$ odd edges such that $W(P) \in A$ can be bounded by: 

\begin{equation}\label{odd1A}
C_1\frac{(2s_n - 2l)!}{(s_n-l)!(s_n-l+1)!}n\sigma^{2(s_n-l)} \sum_{J=1}^{2l} \binom{2s_n - 2l}{J} J! 2^J \binom{2l}{J}\frac{(2s_n - 2l)!}{(2s_n-4l +J)!}\left (\frac{n^\beta}{n^{2/\alpha- \epsilon}}\right )^{2l},
\end{equation}
which can be bounded by:
\begin{equation}\label{odd2A}
C_2\frac{(2s_n - 2l)!}{(s_n-l)!(s_n-l+1)!}n\sigma^{2(s_n-l)} (C_3s_n)^{2l}\left (\frac{n^\beta}{n^{2/\alpha- \epsilon}}\right )^{2l}.
\end{equation}
This is less than or equal to:
\begin{equation}\label{odd3A}
C_2\frac{(2s_n - 2l)!}{(s_n-l)!(s_n-l+1)!}n\sigma^{2s_n} \left(\frac{C_3s_n n^\beta}{n^{2/\alpha- \epsilon}}\right)^{2l}.
\end{equation}
The summation over $l$ of the above gives a contribution which is negligible with respect to $(2\sigma)^{2s_n}ns_n^{-3/2}$ since $s_n^2 \ll n^{\frac{2}{\alpha}-\epsilon - \beta}$.
   
Also, \cite{SosPec} gives us the following estimate for the contribution of odd paths coming from the class B:

\begin{equation}\label{odd1B}
\sum_{l=1}^{s_n-1}\sum_{J=1}^{2l} C^{2l} 4^J J! \binom{2l}{J} \binom {2s_n - 2l}{J}\frac{(2s_n - 2l)!}{(s_n-l)!(s_n-l+1)!}n\sigma^{2(s_n-l)}\left (\frac{n^\beta}{n^{2/\alpha- \epsilon}}\right )^{2l}.
\end{equation}

Again, proceeding as before there is a constant $K>0$ such that (\ref{odd1B}) divided by the contribution of even paths can be bounded by

\begin{equation}
\sum_{l=1}^{s_n-1} (K s_n n^{\beta - \frac{2}{\alpha}+\epsilon})^{2l},
\end{equation}
which tends to $0$ as $n$ goes to infinity. As shown in \cite{SosPec}, with a little bit of effort, the counting in case C can be reduced to the one in case A or B, which finishes the proof of the proposition.

\begin{rem}[Case $\alpha = 2$] When $\alpha = 2$, we do not use Proposition \ref{propw} as it is written to bound the largest eigenvalue of the truncated matrix. In fact, since one knows that, see \cite{Fel}, $$ f(x) = \E(a_{ij}^2\indi_{\{|a_{ij}|<x\}})$$ is a slowly varying function, we can show that for any $0<\delta <\epsilon$ $$\Pro \left(\lambda_{1}(\frac{A_1}{n^{\frac{2}{\alpha}-\epsilon+\delta}}) \geq 4\right) \rightarrow 0, \quad (n\rightarrow\infty).$$
 
\end{rem}

\end{proof}

\subsection{The largest eigenvalue of $A_2$}
In this subsection we will prove that the largest eigenvalue of $A_2$ is actually asymptotically given by its largest entry in absolute value. For short, we denote by $\hat{A}_{ij}$, $i, j = 1, \ldots, n$ the entries of the matrix $A_2$ and by $\hat{A}_{i_1j_1}$ the largest one in absolute value. The aim of this subsection is to prove:

\begin{prop}\label{prop1}
One has that for any $\epsilon > 0$ 
$$\Pro(|\frac{\lambda_{1}(A_2)}{\hat{A}_{i_1j_1}} - 1| > \epsilon) \rightarrow 0. \quad \text{as} \quad n \rightarrow \infty .$$
\end{prop}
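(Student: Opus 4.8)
The plan is to reduce Proposition \ref{prop1} to the analysis already carried out for the matrix $A_n$ in Section \ref{Soss}, since $A_2 = (a_{ij}\indi_{\{|a_{ij}| > n^{\beta}\}})_{i,j=1}^n$ is itself a symmetric matrix with i.i.d. (but now truncated-from-below) entries, and the condition $\beta > 1/\alpha$ is precisely what makes its large entries sparse enough. First I would establish the analogue of Lemma \ref{entries} for $A_2$: with probability tending to one, $A_2$ has no two large entries in the same row, no diagonal entry of comparable size to $\hat A_{i_1j_1}$, and the $l_\infty\to l_\infty$ operator norm of $A_2$ satisfies $\|A_2\|_\infty = \hat A_{i_1j_1}(1 + o(1))$. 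The key point is a counting estimate of the type used in the proof of Lemma \ref{lem1}(a): the expected number of entries of $A_2$ exceeding $b_n^{3/4+\delta}$ in a fixed row is $\le n\,\bar F(b_n^{3/4+\delta}) = n L(b_n^{3/4+\delta}) b_n^{-\alpha(3/4+\delta)}$, which is $o(1)$ by the definition of $b_n$, and summing over rows still gives something $o(1)$; the same kind of bound, combined with a Chernoff/large-deviation argument exactly as in Proposition \ref{pqno1} and Proposition \ref{grande}, controls the sum of the small-but-not-truncated entries in each row. Note that the entries of $A_2$ below threshold are all zero, so in fact the sum over a row of $|\hat A_{ij}|$ away from its maximum is even easier to control than in the untruncated case — only entries of size at least $n^\beta$ contribute.

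Given such a lemma, the two-sided bound follows as in Section \ref{Soss}. For the lower bound, using the variational characterization $\lambda_1(A_2) = \sup_{|v|=1}\langle A_2 v, v\rangle$ and testing against the vector $v$ supported on the two coordinates $i_1, j_1$ (with appropriate signs), one gets $\lambda_1(A_2) \ge \hat A_{i_1j_1}(1 + o(1))$, the error coming from the absence of other large entries in rows $i_1$ and $j_1$ and from the smallness of the diagonal entries $\hat A_{i_1i_1}, \hat A_{j_1j_1}$ relative to $\hat A_{i_1j_1}$. For the upper bound, since $A_2$ is symmetric, $\lambda_1(A_2) \le \|A_2\|_\infty = \max_i \sum_j |\hat A_{ij}|$, and the row-sum lemma identifies this with $\hat A_{i_1j_1}(1+o(1))$. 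Combining the two inequalities gives $\lambda_1(A_2)/\hat A_{i_1j_1} \to 1$ in probability, which is the claim.

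The main obstacle I expect is calibrating the exponents: one must choose an auxiliary threshold exponent (say $3/4 + \eta$ for suitable $\eta$, analogous to the $3/4 + \alpha/8$ and $3/4 + \alpha/16$ scales appearing in Lemma \ref{lem1}) so that simultaneously (i) with high probability at most one entry per row of $A_2$ exceeds $b_n^{3/4+\eta}$, (ii) the contribution of all entries of $A_2$ of size in $[n^\beta, b_n^{3/4+\eta}]$ to any row sum is $o(\hat A_{i_1j_1})$, and (iii) these estimates survive a union bound over the $n$ rows with a probability bound of the form $n\exp(-n^\theta)$. Here the lower truncation at $n^\beta$ is harmless — it only removes entries — but one should double-check that the slowly varying factors $L(\cdot)$ do not spoil the relevant strict inequalities between powers of $n$, which is exactly where estimate \eqref{Luc} is used. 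A secondary, minor point is that $\hat A_{i_1j_1}$ is, with probability going to one, equal to $a_{i_1j_1}$ (the overall largest entry), since $\beta < 2/\alpha$; this is already noted in the Truncation subsection and lets one freely transfer between the two in later arguments.
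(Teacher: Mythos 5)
Your proposal follows essentially the same route as the paper: establish the analogue of Lemma \ref{entries} for the truncated matrix $A_2$ (no two large entries per row, small diagonals at the position $(i_1,j_1)$, row-sum control on $\|A_2\|_\infty$), then lower-bound $\lambda_1(A_2)$ via the test vector $\frac{1}{\sqrt2}(e_{i_1}\pm e_{j_1})$ and upper-bound it by $\|A_2\|_\infty$. You also correctly observe the key simplification that, since entries of $A_2$ below $n^\beta$ are zero and $\alpha\beta>1$, only the ``moderate-entry'' union bound of Proposition \ref{grande} is needed and the Chernoff step for small entries drops out — exactly the simplification exploited in the paper's Lemma \ref{lem3}.
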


Proposition \ref{prop1} will be enough to end the proof of the Corollary \ref{cor1} as we will explain now. First, we point that since the matrices that we are dealing are symmetric, $||A|| = \max\{|\lambda_{1}(A)|, |\lambda_{n}(A)|\}$ is exactly the operator norm of the matrix. Then triangular inequality implies:   

\begin{equation}\label{tri}
\lambda_{max}(\frac{1}{b_{n}}A_2) + \lambda_{min}(\frac{1}{b_{n}}A_1) \leq \lambda_{1}(\frac{1}{b_{n}}A_n) \leq \lambda_{max}(\frac{1}{b_{n}}A_1) + \lambda_{max}(\frac{1}{b_{n}}A_2).
\end{equation}

Now, since $\frac{n^{2/\alpha-\epsilon}}{b_{n}}$ goes to $0$ as $n$ goes to infinity, the boundedness in probability of the largest eigenvalue  of $\frac{1}{n^{2/\alpha-\epsilon}}A_1$ as proved in the last subsection implies that $\lambda_{max}(\frac{1}{b_{n}}A_1)$ goes to $0$ in probability. Then, by proposition \ref{prop1}, we have that the largest eigenvalue of $A_n$ behave just as the largest eigenvalue of $A_2$, i.e,    

\begin{equation}
\lim_{n\rightarrow \infty} \Pro(\lambda_{max}(\frac{1}{b_{n}}A_n)\leq x) = \lim_{n\rightarrow \infty} \Pro(\lambda_{max}(\frac{1}{b_{n}}A_2)\leq x) = \lim_{n\rightarrow \infty} \Pro(\frac{a_{i_1j_1}}{b_{n}}\leq x) = \exp(-x^{-\alpha}).
\end{equation} 

\begin{proof}[Proof of Proposition \ref{prop1}]
The proof of Proposition \ref{prop1} relies on the two following lemmas, repeating the arguments of Lemma \ref{lem1}. 

\begin{lem}\label{lem2}
Let $\epsilon > 0$ be fixed. With probability going to one, one has that
\begin{enumerate}
\item There are no diagonal entry $\hat{A_{ii}}$ greater in absolute value than $b_{n}^{1/2 + \epsilon}$.
\item There is no pair $(i,j)$ such that $|\hat{A}_{ij}| \geq b_{n}^{99/100}$ and $|\hat{A}_{ii}| + |\hat{A}_{jj}| \geq b_{n}^{1/10}$.

\item For any $\delta > 0$, there is no row that has two entries greater than $b_{n}^{3/4 + \delta}$.
\end{enumerate}
\end{lem}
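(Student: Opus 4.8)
The plan is to treat each of the three statements as a first-moment (union-bound) estimate over the $O(n^2)$ entries of $A_2$, exactly mimicking the proof of Lemma \ref{entries} in Section \ref{Soss}, but now keeping careful track of the truncation: the entries $\hat A_{ij}$ of $A_2$ are the $a_{ij}\indi_{|a_{ij}|>n^{\beta}}$, and since $\beta<2/\alpha$ the largest entry of $A_n$ is (with probability tending to $1$) the largest entry of $A_2$, so that $|\hat A_{i_1j_1}|=b_n^{1+o(1)}$ by extreme value theory. For each statement I would first bound the probability that a single fixed entry (or pair of entries in one row) exceeds the relevant power of $b_n$ using \eqref{def1}, then multiply by the number of choices of indices, and finally invoke the basic slowly varying estimate \eqref{Luc} together with $b_n\sim L_o(n)n^{2/\alpha}$ to show the resulting bound is $o(1)$.

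\emph{Statement (a).} There are $n$ diagonal entries; the probability that a given one has $|\hat A_{ii}|>b_n^{1/2+\epsilon}$ is at most $\bar F(b_n^{1/2+\epsilon})=L(b_n^{1/2+\epsilon})b_n^{-\alpha(1/2+\epsilon)}$. By \eqref{Luc} and $b_n^\alpha\asymp n^{2}$ (up to slowly varying factors) this is $O(n^{-1-2\alpha\epsilon+\theta})$ for any small $\theta$, so the union bound over $n$ diagonal entries gives $O(n^{-2\alpha\epsilon+\theta})=o(1)$. \emph{Statement (b).} There are at most $n^2$ pairs $(i,j)$; for a fixed pair the three events $|\hat A_{ij}|\ge b_n^{99/100}$, $|\hat A_{ii}|\ge b_n^{1/10}$ (or similarly for $jj$) are independent (distinct entries of $A_n$), so the probability is at most $\bar F(b_n^{99/100})\cdot 2\bar F(b_n^{1/20})$ — roughly $b_n^{-99\alpha/100}b_n^{-\alpha/20}$ up to slowly varying factors — which beats $n^{-2}$ with room to spare since $99/100+1/20>1$. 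Union bound over the $n^2$ pairs yields $o(1)$. \emph{Statement (c).} Fix a row $i$ and two columns $j\ne k$; the probability that both $|\hat A_{ij}|$ and $|\hat A_{ik}|$ exceed $b_n^{3/4+\delta}$ is at most $\bar F(b_n^{3/4+\delta})^2\asymp b_n^{-2\alpha(3/4+\delta)}$ up to slowly varying factors. There are $n\cdot n^2=n^3$ such triples $(i,j,k)$, and since $2\alpha(3/4+\delta)>3/2$, i.e. $b_n^{2\alpha(3/4+\delta)}\gg n^3$ for every $\delta>0$ by the definition of $b_n$, the union bound gives $O(n^{-c})$ for some $c=c(\delta)>0$; here I would also note that $\|A_2\|_\infty$-type sums later force us to pick $\delta$ small but positive, which is harmless.

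The steps are carried out in the order (a), (b), (c); each is a short computation. I do not expect a serious obstacle: the only subtlety is purely bookkeeping, namely replacing the untruncated $|a_{ij}|$ by $|\hat A_{ij}|$ (which only decreases the relevant tail probabilities, since $\bar F$ of the truncated variable is bounded by $\bar F$ of the original), and translating powers of $b_n$ into powers of $n$ through $b_n\sim L_o(n)n^{2/\alpha}$ while absorbing all slowly varying factors into an arbitrarily small polynomial loss $n^\theta$ via \eqref{Luc}. The one place to be mildly careful is that the exponents $1/2+\epsilon$, $99/100$, $1/10$, $3/4+\delta$ are exactly the thresholds for which the union bounds close, so one should verify each inequality ($\alpha(1/2+\epsilon)>1/2$ after noting the $n$ vs. $n^2$ mismatch, $\alpha(99/100+1/20)>1$, $2\alpha(3/4+\delta)>3/2$) holds for all $\alpha\in[2,4)$ — which it does, with the worst case $\alpha=2$ still leaving a strictly positive margin in every case. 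These estimates will then feed into the analogue of \eqref{minmax}–\eqref{norm} to bound $\lambda_1(A_2)$ above by $\|A_2\|_\infty$ and below via a coordinate vector at $(i_1,j_1)$, completing the proof of Proposition \ref{prop1}.
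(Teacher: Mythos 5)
Your proposal is correct and matches the paper's approach: each of (a), (b), (c) is treated by a union bound over the relevant $O(n)$, $O(n^2)$, or $O(n^3)$ index choices, the tail estimate from (\ref{def1}), and absorption of the slowly varying factors via (\ref{Luc}) together with $b_n^{\alpha}\sim n^2$ (up to slowly varying corrections), which is exactly how the paper closes each bound. One small arithmetic slip in (a): $\bar F(b_n^{1/2+\epsilon})$ is of order $b_n^{-\alpha(1/2+\epsilon)}\sim n^{-1-2\epsilon}$ up to slowly varying factors, not $n^{-1-2\alpha\epsilon}$, but the union bound over the $n$ diagonal entries still yields $o(1)$.
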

\begin{proof}
(a) follows from basics results about extremes of $n$ independent variables. For (b),  one recalls (\ref{Luc}) to compute
\begin{eqnarray*}
\Pro(\exists (i,j)| |\hat{A}_{ij}| \geq b_{n}^{99/100} \text{and }  |\hat{A}_{ii}| + |\hat{A}_{jj}| \geq b_{n}^{1/10}) &\leq& 2\binom{n}{2} \bar{F}(b_{n}^{99/100})\bar{F}(b_{n}^{1/10})  \\
&\leq& n^{2} L(b_{n}^\frac{99}{100})L(b_{n}^\frac{1}{10})(b_{n}^\frac{-109\alpha}{100}) \\
&=& o(n^{-\frac{18}{100}+\theta})
\end{eqnarray*}
for some $\theta >0$ small enough. 

Similarly, another application of (\ref{Luc}) yields
\begin{eqnarray*}
\Pro(\exists i \leq n, \exists j, k \leq n, j\neq k, s.t.  |\hat{A}_{ij}| \geq b_{n}^{3/4 + \delta} \text{and } |\hat{A}_{ik}| \geq b_{n}^{3/4 + \delta}) &\leq& n^3 \bar{F}(b_{n}^{3/4 + \delta})\bar{F}(b_{n}^{3/4 + \delta})  \\
&\leq& n^{3} L(b_{n}^{3/4 + \delta})^2 \frac{1}{b_{n}^{3\alpha/2+2\delta \alpha}} \\
&=& o(n^{-4\delta+\theta})
\end{eqnarray*}
for some $\theta >0$ small enough. In the last line we have used that for any $\delta>0$, $b_{n}^{3\alpha/2+2\delta \alpha} \gg n^3$.
\end{proof}

We now show that the largest entry $\hat{A}_{i_1j_1}$ determines the largest eigenvalue of $A_2$. The idea is similar to the first part with minor changes. Introduce the vector 
$$ v_1 = \frac{1}{\sqrt{2}}(e_{i_1} \pm e_{j_1}),$$
where the sign $\pm$ is determined by the following rule: $\pm e_{j_1} = + e_{j_1}$ if $\hat{A}_{i_1j_1} \geq 0$ and $- e_{j_1}$ otherwise. Then, with probability going to one, $$\left\langle A_2v_1, v_1\right\rangle = \frac{1}{2}\hat{A}_{i_1i_1} + \frac{1}{2}\hat{A}_{j_1j_1} + |\hat{A}_{i_1j_1} | = |\hat{A}_{i_1j_1}|(1 + o(1)),$$ in view of item (b) of the preceding lemma and the fact that $\hat{A}_{j_1j_1}$ is the largest entry of the matrix $A_2$. 
Thus, again with probability going to one, 
$$ \lambda_{max}(A_2) \geq |\hat{A}_{i_1j_1}|(1 + o(1)).$$

We now turn to the upper bound which follows from the following Lemma.

\begin{lem}\label{lem3} One has that
$$\lim_{n\rightarrow\infty} \Pro(\exists i,  1 \leq i \leq n, \quad \max_{1 \leq j \leq n} |\hat{A}_{ij}| > b_{n}^{\frac{3}{4}+\frac{\alpha}{16}} \quad \text{and} \quad \sum_{j=1}^{n} |\hat{A}_{ij}| -\max_{1 \leq j \leq n} |\hat{A}_{ij}| >b_{n}^{\frac{3}{4}+\frac{\alpha}{16}})=0.$$
\end{lem}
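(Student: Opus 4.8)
The plan is to follow exactly the strategy used for part (b) of Lemma \ref{lem1} in the sample covariance case, but now applied to the truncated Wigner matrix $A_2$ whose entries are the $\hat{A}_{ij} = a_{ij}\indi_{\{|a_{ij}| > n^\beta\}}$. The key simplification compared with Lemma \ref{lem1} is that each nonzero entry of $A_2$ exceeds $n^\beta$ in absolute value, so for a fixed row $i$ the number of nonzero entries $\#\{j : |\hat{A}_{ij}| > n^\beta\}$ is a binomial random variable with mean $n\bar F(n^\beta) = nL(n^\beta)n^{-\alpha\beta}$, which is small (indeed $\ll n^\kappa$ for every $\kappa>0$, since $\alpha\beta > 1$). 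Thus the sum $\sum_{j}|\hat{A}_{ij}| - \max_j |\hat{A}_{ij}|$ over a single row is a sum of very few terms, each of which, once we discard the largest one, should typically be no larger than roughly $b_n^{3/4}$ by part (c) of Lemma \ref{lem2}.

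The steps I would carry out, in order, are as follows. First I would fix a row index $i$ and split the nonzero off-diagonal entries of that row into two groups: those with $|\hat{A}_{ij}| \le b_n^{3/4+\epsilon}$ and those with $|\hat{A}_{ij}| > b_n^{3/4+\epsilon}$, for a small $\epsilon$ to be chosen. By part (c) of Lemma \ref{lem2}, with probability going to one no row has two entries exceeding $b_n^{3/4+\epsilon}$, so after removing the maximum of the row there are no ``large'' entries left and it suffices to bound the sum of entries lying in the window $(n^\beta, b_n^{3/4+\epsilon}]$. Second, mimicking Proposition \ref{pqno1} and Proposition \ref{grande}, I would introduce dyadic-type counting variables $Y_i^k = \#\{j : |\hat{A}_{ij}| \ge n^\beta 2^{\,\text{(scale $k$)}}\}$ (or a geometric grid between $n^\beta$ and $b_n^{3/4+\epsilon}$), control each $Y_i^k$ by Chernoff's inequality exactly as in Lemma \ref{ponto} (here the bound is even easier because $\E Y_i^k \le n\bar F(n^\beta)$ is polynomially small, so large deviations above twice the mean cost $\exp(-n^\theta)$), and then bound $\E(\sum \text{(window terms)})$ by a geometric sum as in Lemma 8. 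Since there are $n$ rows, a union bound absorbing the $\exp(-n^\theta)$ factors gives the claim.

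The one genuine thing to check is the arithmetic of exponents: I must verify that $2p\,b_{np}$-style geometric estimate, here $n\,\bar F(n^\beta)\cdot b_n^{3/4+\epsilon}$ together with the tail of the geometric series, stays below $b_n^{3/4+\alpha/16}$. The relevant inequality is that $n L(n^\beta) n^{-\alpha\beta}\cdot b_n^{3/4+\epsilon} \ll b_n^{3/4+\alpha/16}$, i.e. (using $b_n \sim L_o(n)n^{2/\alpha}$) roughly $n^{1-\alpha\beta+\epsilon'} \ll b_n^{\alpha/16}\sim n^{1/8}$, which follows because $\alpha\beta > 1$ forces $1 - \alpha\beta < 0$; a small enough $\epsilon$ makes $1-\alpha\beta+\epsilon' < 1/8$. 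The main obstacle, such as it is, is bookkeeping: keeping the truncation level $n^\beta$, the cutoff $b_n^{3/4+\epsilon}$, and the target $b_n^{3/4+\alpha/16}$ consistent with the constraint \eqref{beta} on $\beta$, and making sure the slowly varying functions $L$, $L_o$, $l_0$ are swept under the rug correctly via \eqref{Luc}. Once this is done, the upper bound $\lambda_{\max}(A_2) \le \|A_2\|_\infty \le |\hat A_{i_1j_1}|(1+o(1))$ follows as in Remark \ref{rem1}, completing the proof of Proposition \ref{prop1}.
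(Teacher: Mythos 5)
Your overall intuition is right — after removing the single large entry allowed by Lemma~\ref{lem2}(c), a row of $A_2$ consists of very few nonzero entries, each below $b_n^{3/4+\epsilon}$, so the remaining sum is small — and this is exactly the idea the paper uses. However, your proposed execution has a genuine gap in the concentration step, and it is also more complicated than the paper's, which gets by with a single counting bound rather than a dyadic decomposition.

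The gap is in the claim that, because $\E Y_i^k \le n\bar F(n^\beta)$ is polynomially small, ``large deviations above twice the mean cost $\exp(-n^\theta)$.'' Chernoff in the form used in Lemma~\ref{ponto}, namely $\Pro(Y \ge 2\E Y) \le \exp(-\tfrac14 \E Y)$, is strong precisely when $\E Y$ is large; if $\E Y = n^{-\kappa}\to 0$ then $\exp(-\tfrac14 \E Y)\to 1$ and the bound is vacuous. In Lemma~\ref{ponto} this works because the counting levels $b_{np}^{k/(2T+1)}$ for $k\le T$ stay \emph{below} $b_{np}^{1/2}$, so $\E Y_i^k \gg 1$; in your setting, by construction all nonzero entries of $A_2$ exceed $n^\beta$ with $\alpha\beta>1$, so $\E Y_i^k \le n\bar F(n^\beta)\to 0$ for every $k\ge 0$, and the Chernoff-above-twice-the-mean threshold gives no control at all. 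What you actually need is a binomial tail bound at a \emph{fixed} threshold much larger than the mean, $\Pro(Y \ge m) \le \binom{n}{m}\bar F(n^\beta)^m$ with $m$ of order $b_n^{\alpha/32}$, not $m = 2\E Y$.

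Once this is fixed, the dyadic decomposition becomes unnecessary. The paper's proof treats the whole window $(n^\beta,\, b_n^{3/4+\alpha/32})$ in one shot: if $\sum_{j: n^\beta < |\hat A_{ij}| < b_n^{3/4+\alpha/32}} |\hat A_{ij}|$ exceeds $\tfrac12 b_n^{3/4+\alpha/16}$, then (each term being $< b_n^{3/4+\alpha/32}$) at least $\tfrac12 b_n^{\alpha/32}$ entries of the row must exceed $n^\beta$, an event of probability at most $n^{1+b_n^{\alpha/32}(1-\alpha\beta)} \le \exp(-n^\kappa)$ since $\alpha\beta>1$. Combined with Lemma~\ref{lem2}(c), which removes the possibility of a second entry above $b_n^{3/4+\alpha/32}$, this gives the lemma directly. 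Your plan effectively mimics both Propositions~\ref{pqno1} and~\ref{grande}, but the truncation at $n^\beta$ makes the Proposition~\ref{pqno1}-style small-entry analysis irrelevant: there are no small entries left, and the lone counting step of Proposition~\ref{grande} suffices.
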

\begin{proof}
Recall that by (\ref{beta}) we have $\alpha \beta > 1$.

Now, a trivial union bound gives us that
\begin{eqnarray}\label{Assa}
\Pro\left(\exists i, \sum_{j: n^{\beta}<|\hat{A}_{ij}|<b_{n}^{\frac{3}{4}+\frac{\alpha}{32}}}|\hat{A}_{ij}| \geq \frac{1}{2}b_{n}^{\frac{3}{4}+\frac{\alpha}{16}}\right) &\leq& \nonumber n \Pro\left(\#\{j:n^{\beta}<|\hat{A}_{ij}|<b_{n}^{\frac{3}{4}+\frac{\alpha}{32}}\} \geq \frac{b_{n}^\frac{\alpha}{32}}{2}\right) \\ \nonumber
&\leq& n^{1 + b_{n}^\frac{\alpha}{32}} \bar{F}(n^{\beta})^{b_{n}^\frac{\alpha}{32}}  \\ 
&\leq& n^{1 + b_{n}^{\frac{\alpha}{32}}(1-\alpha\beta)} \leq \exp(-n^\kappa)
 \end{eqnarray}
for some sufficiently small $\kappa > 0$. Thus, (\ref{Assa}) together with part (c) of Lemma \ref{lem2} yelds Lemma \ref{lem3}.

\end{proof}
 Now, combining Lemma \ref{lem3} with (c) of Lemma \ref{lem2} and repeating the same argument done in last section to prove part (b) in Lemma \ref{lem1}, Proposition {\ref{prop1}} holds.
 \end{proof}

\begin{proof}[Proof of Theorem \ref{thmw} when $2< \alpha < 4$]
By Proposition \ref{propw}, with probability going to $1$, we know that the spectrum of $\frac{1}{\sqrt{n^{2/\alpha - \epsilon}}}A_1$ is bounded. Also, let $A_{i_lj_l}$ be the $l-$th largest entry in absolute value of $A_2$. If one sets
\begin{equation*}
v_l=e_{i_l} \pm e_{j_l},
\end{equation*}
where $e_{i_l} \pm e_{j_l} = e_{i_l}+ e_{j_l}$ if $A_{i_lj_l} \geq 0$ and $e_{i_l}- e_{j_l}$ otherwise, then 
\begin{equation*}
A_2 v_l = \sum_{i=1}^n(A_{ii_l}\pm A_{ij_l})e_i = |A_{i_lj_l}|v_l + r,
\end{equation*}
where
\begin{equation*}
r = A_{i_li_l}e_{i_l}\pm A_{j_lj_l}e_{j_l}+\sum_{i=1,i\neq i_l,j_l}^{n}(A_{ii_l}\pm A_{ij_l})e_i.
\end{equation*}
By the same arguments that we used in Theorem \ref{thm1}, one can show that $\frac{\|r\|}{b_{n}}$ tends to $0$ as $n$ tends to infinity. This implies an existence of an eigenvalue $\lambda$ of $A_2$ such that $\lambda = |A_{i_lj_l}|(1+o(1))$. An application of Cauchy Interlacing Theorem just as before shows that in fact $\lambda = \lambda_l(A_2)$, where $\lambda_l(A_2)$ represents the $l-$th eigenvalue of $A_2$ in descending order.
Since by Weyl's inequalities, one has for all $l$ $, 1\leq l \leq n$,
\begin{equation}\label{Weyl}
\lambda_{l}(A_2)+\lambda_{n}(A_1) \leq \lambda_l(A_1+A_2) \leq \lambda_l(A_2)+\lambda_1(A_1) .
\end{equation}  

Dividing equation (\ref{Weyl}) by $b_n^2$ and taking the limit one gets 
\begin{equation*}
\lim_{n\rightarrow\infty} \Pro (\lambda_l(A_1+A_2) \leq b_{n} x) = \lim_{n\rightarrow\infty} \Pro (\lambda_l(A_2) \leq b_{n}x) = \lim_{n\rightarrow\infty} \Pro (a_{i_lj_l} \leq b_{n}x),
\end{equation*}
which ends the proof of (b).
\end{proof}


\section{Sample Covariance Matrices when $2 \leq \alpha < 4$.}\label{cov4}

This section heavily uses the previous ones. We repeat the arguments of section \ref{cov0} using the results of section \ref{wigner}.
As before, the first step is to truncate our matrix. 

Define
\begin{equation}
A_1=(\hat{A_{ij}}\indi_{|\hat{A_{ij}}|\leq n^{\beta}})_{i,j}, \quad A_2 = A_n - A_1,
\end{equation}
and also 
\begin{equation}
X_1= A_1A_1^t, \quad X_2 = X_n - X_1.
\end{equation}
It is clear that $ X_2 = A_1A_2^t + A_2A_1^t + A_2A_2^t$ and that the largest entries of $A_n$ belong to $A_2$. We are going to study the eigenvalues of $X_2$. The following lemma follows directly from the proof of Lemma \ref{lem1} and Lemma \ref{lem3}.

\begin{lem}\label{lem4}

Let $A_2$ be defined as above. Also assume that $p=\left\lfloor \gamma n\right\rfloor$ for some constant $\gamma \geq 1$.
Then:

\begin{enumerate}

	\item If $B_n^\delta$ is the event 'There is a row with $2$ entries greater than $b_{np}^\delta$ in absolute value' then 
	$$ \forall \delta > 3/4, \quad \lim_{p\rightarrow\infty} \Pro(B_n^\delta) = 0.$$
	
	\item One has that $$\lim_{p\rightarrow\infty} \Pro(\exists i,  1 \leq i \leq n, \quad \max_{1 \leq j \leq p} |a_{ij}| > b_{np}^{\frac{3}{4}+\frac{\alpha}{16}} \quad \text{and} \quad \sum_{j=1}^{p} |a_{ij}| - \max_{1 \leq j \leq p} |a_{ij}| >b_{np}^{\frac{3}{4}+\frac{\alpha}{16}})=0.$$
	
	\item One has that $$\lim_{p\rightarrow\infty} \Pro(\exists j,  1 \leq j \leq p, \quad \max_{1 \leq i \leq n} |a_{ij}| >b_{np}^{\frac{3}{4}+\frac{\alpha}{16}} \quad \text{and} \quad \sum_{j=1}^{p} |a_{ij}| - \max_{1 \leq j \leq p} |a_{ij}| >b_{np}^{\frac{3}{4}+\frac{\alpha}{16}})=0.$$

\end{enumerate}
\end{lem}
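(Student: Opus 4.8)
The plan is to deduce all three statements from the union bounds already carried out inside the proofs of Lemma~\ref{lem1} and Lemma~\ref{lem3}, using the single structural observation that after truncation the nonzero entries of $A_2$ are exactly those $a_{ij}$ of $A_n$ with $|a_{ij}|>n^{\beta}$, so that every nonzero entry of $A_2$ has absolute value strictly larger than $n^{\beta}$.

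First I would record the scale comparison $n^{\beta}\ll b_{np}^{3/4}$. Since $\alpha\geq 2$, condition (\ref{beta}) gives $\alpha\beta<\frac{2(8-\alpha)}{10-\alpha}\leq\frac32$, and because $b_{np}$ equals $(np)^{1/\alpha}$ up to a slowly varying factor while $p\sim\gamma n$, this yields $n^{\beta}\ll b_{np}^{3/4}$; in particular all of the thresholds $b_{np}^{\delta}$ ($\delta>3/4$), $b_{np}^{3/4+\alpha/32}$, $b_{np}^{3/4+\alpha/16}$ sit strictly above the truncation level, so any entry of $A_n$ exceeding one of them is retained in $A_2$. Statement (a) is then immediate: since the nonzero entries of $A_2$ form a subset of those of $A_n$, the event ``$B_n^{\delta}$'' for $A_2$ is contained in the event ``$B_{np}^{\delta}$'' for $A_n$, whose probability tends to $0$ by part (a) of Lemma~\ref{lem1}.

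For (b) I would fix a row $i$ and split $\sum_{j}|\hat A_{ij}|=\Sigma_i^{\mathrm{high}}+\Sigma_i^{\mathrm{mid}}$, the first sum over the $j$ with $|\hat A_{ij}|\geq b_{np}^{3/4+\alpha/32}$ and the second over the $j$ with $n^{\beta}<|\hat A_{ij}|<b_{np}^{3/4+\alpha/32}$ (there are no other nonzero entries, by the previous paragraph). Applying part (a) with an exponent in $(3/4,\,3/4+\alpha/32)$ shows that with probability tending to one no row carries two entries $\geq b_{np}^{3/4+\alpha/32}$; on that event, any row whose maximum exceeds $b_{np}^{3/4+\alpha/16}$ has $\Sigma_i^{\mathrm{high}}=\max_j|\hat A_{ij}|$, which is precisely the term subtracted off. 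For $\Sigma_i^{\mathrm{mid}}$ I would repeat verbatim the union bound from the proof of Lemma~\ref{lem3}: the probability that some row of the $n\times p$ array contains at least $\tfrac12 b_{np}^{\alpha/32}$ entries of absolute value $>n^{\beta}$ is at most $n\binom{p}{\lceil b_{np}^{\alpha/32}/2\rceil}\bar F(n^{\beta})^{\lceil b_{np}^{\alpha/32}/2\rceil}\leq\exp(-n^{\kappa})$ for some $\kappa>0$, using $\alpha\beta>1$ (which holds since $\beta>1/\alpha$) and $p\sim\gamma n$; on the complementary event each row has fewer than $\tfrac12 b_{np}^{\alpha/32}$ mid-range entries, each at most $b_{np}^{3/4+\alpha/32}$, so $\Sigma_i^{\mathrm{mid}}<\tfrac12 b_{np}^{3/4+\alpha/16}$. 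Intersecting the two events gives $\sum_j|\hat A_{ij}|-\max_j|\hat A_{ij}|<b_{np}^{3/4+\alpha/16}$ uniformly in $i$ with probability tending to one, which is (b). Statement (c) is identical after exchanging rows with columns and the roles of $n$ and $p$, exactly as part (c) of Lemma~\ref{lem1} was obtained from part (b).

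I do not expect a genuine obstacle: the whole lemma is a transcription of arguments already in place, and is in fact \emph{easier} than Lemma~\ref{lem1}, because truncation removes the small entries so that the delicate scale-by-scale estimates of Propositions~\ref{pqno1} and~\ref{grande} are no longer needed --- only the crude union bound of Lemma~\ref{lem3} survives. The one point deserving a line of verification is the scale inequality $n^{\beta}\ll b_{np}^{3/4}$, i.e.\ $\alpha\beta<3/2$, which is exactly what (\ref{beta}) guarantees on $[2,4)$ and which places the truncation level comfortably below every threshold appearing in the statement.
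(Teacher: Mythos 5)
Your proof is correct and follows exactly the route the paper intends: the paper's entire justification for this lemma is the single sentence ``The following lemma follows directly from the proof of Lemma \ref{lem1} and Lemma \ref{lem3},'' and you have filled in precisely those details, including the essential scale comparison $n^{\beta}\ll b_{np}^{3/4}$ (equivalently $\alpha\beta<3/2$, which (\ref{beta}) gives on $[2,4)$) that legitimizes the transfer of Lemma \ref{lem1}(a) and the mid-range union bound of Lemma \ref{lem3} to the $n\times p$ setting. You are also right that the truncation makes this strictly easier than Lemma \ref{lem1}: the scale-by-scale analysis of Propositions \ref{pqno1} and \ref{grande} is replaced by the single crude counting bound, just as in the Wigner case.
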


If one takes $v=(0,\ldots,0,1,0,\ldots,0)$ where the only non-zero entry of $v$ is in the position $i_1$, $\left\langle X_2v,v\right\rangle$ will give us the following bound:
\begin{equation}\label{akdof}
\lambda_{max}(X_2) \geq a_{i_1j_1}^2 (1+o(1)).
\end{equation} 

This inequality is justified by the preceding lemma, just as in theorem \ref{thm1}, and by the fact that the diagonal of $A_1A_2^t + A_2A_1^t$ has only zeros. More than that, as in section \ref{cov0}, we can also infer from Lemma \ref{lem4} that 
\begin{equation}\label{skpo}
\lambda_{max}(A_2A_2^t) = a_{i_1j_1}^2 (1+o(1)).
\end{equation} 

Now, using Rayleigh-Ritz representation and linearity of the scalar product, we have that
\begin{eqnarray*}
\lambda_{max} (X_n) &=& \max_{v: |v|=1} \langle X_nv, v \rangle = \max_{v: |v|=1} \langle (A_1 + A_2)(A_1 + A_2)^tv, v \rangle \nonumber \\
&=& \max_{v: |v|=1} \left(\langle X_1v, v \rangle +  \langle A_2A_2^tv, v \rangle + \langle A_1A_2^tv, v \rangle + \langle A_2A_1^tv, v \rangle \right)
\end{eqnarray*}
which, by Cauchy-Schwartz, yelds
\begin{eqnarray}\label{Ams2}
\lambda_{max} (X_n) &\leq& \max_{v: |v|=1}  \langle X_1v, v \rangle  + \max_{v: |v|=1}\langle A_2A_2^tv, v \rangle + 2 \max_{v: |v|=1}\|A_1^t v\|\max_{v: |v|=1}\|A_2^t v\| \nonumber \\ 
&\leq& \lambda_{max}(X_1) + \lambda_{max}(A_2A_2^t) + 2\left(\lambda_{max}(X_1)\lambda_{max}(A_2A_2^t)\right)^{1/2}.
\end{eqnarray}

In view of (\ref{akdof}), (\ref{skpo}), (\ref{Ams2}) and Weyl's inequality for $X_n = X_1 + X_2$, namely,
$$  \lambda_{max} (X_2) + \lambda_{min}(X_1) \leq \lambda_{max} (X_n),$$
it remains to show that the largest eigenvalue of $X_1$ is negligible with respect to $a_{i_1j_1}^2$ to conclude that 
$$ \lambda_{max} (X_n) = a_{i_1j_1}^2(1 + o(1))$$ and, therefore, finish the proof of Corollary  \ref{cor2}.

%
%
%

Thus, we turn back our attention to the truncated matrix $X_1 = A_1A_1^t$. As we did before, we will show that its largest eigenvalue properly normalized remains bounded on a set of probability arbitrarily close to $1$. Again, we study the asymptotics of some expectations of $X_1$.  

By the results of the last section, we just need to control the expected value of some traces of the matrix $X_1$.

Let $s_n$ be as in section \ref{comb}. One can write

\begin{equation}\label{tracecm}
\E(\text{Tr}X_1^{s_n})=\sum_\mathscr{P} \E a_{i_1i_0}a_{i_1i_2}a_{i_3i_2}a_{i_3i_4}\ldots a_{i_{2s_n-1}i_{2s_n-2}}a_{i_{2s_n-1}i_0}.
\end{equation}
where $\mathscr{P}$ denotes the set of all closed paths $P=\{i_0,i_1,\ldots,i_{2s_n-1},i_0\}$ with a distinguished origin, in the set $\{1,2,\ldots,p\}$ with the restriction $i_{t} \in \{1,2,\ldots,n\}$ for odd $t$. Now, since $a_{ij} \neq a_{ji}$ orientation of an edge plays a role.

We say that a path is odd if the number of passages in the direction $i \rightarrow j$ plus the number of passages in the direction $j \rightarrow i$ is odd for some $i$ and $j$. $P$ is even if it is not odd. Since we can center the random variables $a_{ij}\indi_{|a_{ij}|\leq n^{\beta}}$ just as in section \ref{wigner}, odd paths with a non-zero contribution have at least $3$ passages in one direction of an odd edge. We will prove that the contribution of odd paths to the sum (\ref{tracecm}) is negligible and that contribution of even paths can be easily bounded by the results on the Wigner case. To do so, one can proceed as follows. 

Construct a $p \times p$ random symmetric matrix $M = (y_{ij})_{1\leq i,j\leq p}$ such that $y_{ij}$ are independent identically distributed random variables with the same distribution as $a_{11}$. Hence, if we denote $\E(P)$ as the contribution of the path,

\begin{equation}\label{tracesum}
\sum_{P \in \mathscr{P}, P \text{even}} \E(P) \leq \E(\text{Tr}M^{2s_n}).
\end{equation}

In fact, one has a $1-1$ relation between paths in the LHS and paths that give a non-zero contribution to the sum in the RHS. Furthermore, if an edge $(i,j)$ is read $r$ times from left to right and $s$ times in the opposite direction, then $p + s = 2q$ for some integer $q$ and we can use the inequality 
\begin{equation}\label{lala}
\E a_{ij}^r\E a_{ji}^s \leq  \E |a_{ij}|^r\E |a_{ji}|^s \leq \E y_{ij}^{2q},
\end{equation} 
leading to (\ref{tracesum}).
 
Therefore, proceeding as in section \ref{wigner}, equation (\ref{tracesum}) implies:

\begin{prop}
There exists a constant $C>0$ such that $\E \left( \text{Tr} (\frac{X_1}{n^{4/\alpha-2\epsilon}})^{2s_n}) \right) < (2\sigma)^{4s_n}\frac{C n}{s_n^{3/2}}$. Moreover, 
$$\Pro(\lambda_{max}(X_1) \geq 8\sigma^2n^{4/\alpha-2\epsilon}) \leq \frac{\E \left( \text{Tr} (\frac{X_1}{n^{4/\alpha-2\epsilon}})^{2s_n}) \right)}{(8\sigma^2)^{2s_n}}\leq \exp(-\eta s_n),$$
for some constant $\eta > 0.$
\end{prop}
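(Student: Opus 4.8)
The plan is to establish the moment bound by the combinatorial (moment) method, transferring the trace of a high power of $X_1$ to the Wigner estimate of Proposition \ref{propw}, and then to read off the tail bound from Markov's inequality using that $X_1$ is positive semidefinite.

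First I would center. As in Section \ref{wigner}, one replaces the truncated entries $\hat A_{ij}$ by $\hat A_{ij}-C_n$ with $C_n=\E(\hat A_{ij})$. This perturbs $A_1$ by the rank-one matrix $\E A_1=C_n J$ ($J$ the all-ones $n\times p$ matrix), whose operator norm is $|C_n|\sqrt{np}$; by Lemma \ref{center} and $p=\lfloor\gamma n\rfloor$ this is $O(L(n^\beta)n^{\beta(1-\alpha)+1})$, and since $\beta>1/\alpha$ and $\alpha\ge 2$ one has $\beta(1-\alpha)+1<1/\alpha$, so it is $\ll n^{2/\alpha-\epsilon}$. Consequently the largest singular value of $A_1$, and hence $\lambda_{max}(X_1)$ normalized by $n^{4/\alpha-2\epsilon}$, is unaffected in the limit, and we may assume the $\hat A_{ij}$ centered.

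Now expand, as in (\ref{tracecm}) but with $2s_n$ in place of $s_n$,
$$\E\,\mathrm{Tr}\,X_1^{2s_n}=\sum_{P\in\mathscr P}\E(P),$$
a sum over closed paths of length $4s_n$, and split it into even and odd paths. For even paths, the one-to-one correspondence described above together with inequality (\ref{lala}) --- which rests on $\E|\hat a|^r\,\E|\hat a|^s\le\E|\hat a|^{r+s}$ for a single nonnegative variable (Chebyshev's sum inequality) --- bounds their total contribution by $\E\,\mathrm{Tr}\,M^{4s_n}$, where $M$ is the $p\times p$ real symmetric matrix with i.i.d. entries distributed as $\hat a_{11}$. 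Since $p$ is comparable to $n$ and $2s_n$ still lies in the admissible window, Proposition \ref{propw} applied to $M$ with $2s_n$ in place of $s_n$ gives
$$\E\,\mathrm{Tr}\Big(\frac{M}{n^{2/\alpha-\epsilon}}\Big)^{4s_n}\le C(2\sigma)^{4s_n}\frac{n}{(2s_n)^{3/2}}\le C'(2\sigma)^{4s_n}\frac{n}{s_n^{3/2}},$$
and since $n^{4/\alpha-2\epsilon}=(n^{2/\alpha-\epsilon})^2$ this is exactly the claimed bound for the even part of $\E\,\mathrm{Tr}(X_1/n^{4/\alpha-2\epsilon})^{2s_n}$. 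For odd paths, after centering any path with nonzero contribution has a directed edge traversed at least three times, so one runs the gluing and insertion surgery of \cite{SosPec} as in Section \ref{wigner}: each of the $2l$ removed odd edges is reinserted at a cost at most $n^\beta/n^{2/\alpha-\epsilon}$ apiece, and the resulting sum over $l$ is a convergent geometric series because $s_n^2\ll n^{2/\alpha-\epsilon-\beta}$. Hence the odd contribution is $o(1)$ times the even one, which proves the first assertion.

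For the tail bound, $X_1=A_1A_1^t$ is positive semidefinite, so $\lambda_{max}(X_1)^{2s_n}\le\mathrm{Tr}\,X_1^{2s_n}$, and Markov's inequality gives
$$\Pro\big(\lambda_{max}(X_1)\ge 8\sigma^2 n^{4/\alpha-2\epsilon}\big)\le\frac{\E\,\mathrm{Tr}(X_1/n^{4/\alpha-2\epsilon})^{2s_n}}{(8\sigma^2)^{2s_n}}\le\frac{(2\sigma)^{4s_n}\,Cn/s_n^{3/2}}{(8\sigma^2)^{2s_n}}=\frac{Cn}{4^{s_n}s_n^{3/2}},$$
which, because $\log n\ll s_n$, is at most $\exp(-\eta s_n)$ for $n$ large (any $\eta<2\log 2$ works). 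I expect the main obstacle to be the combinatorial transfer to the rectangular setting: setting up the one-to-one correspondence between the bipartite paths counted by $\mathrm{Tr}\,X_1^{2s_n}$ and the closed paths of the symmetric matrix $M$, and checking that the \cite{SosPec} odd-edge surgery carries over with the truncation level $n^\beta$ and the present range of $s_n$; the remaining steps are bookkeeping already carried out in Sections \ref{cov0} and \ref{wigner}.
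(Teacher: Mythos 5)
Your proposal is correct and takes essentially the same route as the paper: expand $\E\,\mathrm{Tr}\,X_1^{2s_n}$ into bipartite paths, compare the even-path contribution to $\E\,\mathrm{Tr}\,M^{4s_n}$ for the auxiliary $p\times p$ symmetric matrix $M$ via the moment inequality (\ref{lala}), invoke Proposition \ref{propw}, dispose of odd paths by the \cite{SosPec} gluing/insertion surgery, and finish with Markov. The only differences are cosmetic: you make the centering step and the use of positive semidefiniteness for $\lambda_{\max}^{2s_n}\le\mathrm{Tr}\,X_1^{2s_n}$ explicit (the paper leaves these implicit), and you correctly track that $X_1^{2s_n}$ corresponds to paths of length $4s_n$ (the paper's display (\ref{tracecm}) is written for $X_1^{s_n}$, a small inconsistency with the proposition's statement).
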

 
To prove that the contribution of odd paths is negligible with respect to $(2\sigma)^{4s_n}\frac{C n}{s_n^{3/2}}$, we can 
still bound the contribution of each path using the inequality (\ref{lala}). Since to an odd path $\mathcal{P}$, there corresponds an odd path in the expansion of  $\E(\text{Tr}M^{2s_n})$,
we analyze 
their contribution as in section $\ref{wigner}$.  
\begin{equation}\label{tracel}
\sum_{P \in \mathscr{P}, P \text{odd}} \E(P) \ll (2\sigma)^{4s_n}\frac{C n}{s_n^{3/2}}.
\end{equation}
 
This ends the proof Corollary \ref{cor2} in the case of $2 \leq \alpha<4$. The proof of Theorem \ref{thm1} in this case is identical to the proof of the case $0<\alpha<2$. 

\section{Acknowledgements}\nonumber
The authors would like to thank anonymous referees 1 and 2 for the careful and meticulous reading of the paper. Their comments and corrections helped in making this paper more coherent and error free. Research supported in part by NSF Grant OISE-0730136.

\end{document}